\newtheorem{theorem}{Theorem}[section]
\newtheorem{lemma}[theorem]{Lemma}
\newtheorem{proposition}[theorem]{Proposition}
\newtheorem{corollary}[theorem]{Corollary}
\newtheorem{maintheorem}{Theorem}
\theoremstyle{definition}
\newtheorem{definition}[theorem]{Definition}
\newtheorem{example}[theorem]{Example}
\theoremstyle{remark}
\newtheorem{remark}[theorem]{Remark}
\numberwithin{equation}{section}
\newcommand{\Ltil}{\widetilde{L}}
\newcommand{\Gtil}{\widetilde{G}}
\newcommand{\Tbar}{\overline{T}}
\newcommand{\Gbar}{\overline{G}}
\newcommand{\ebar}{\overline{e}}
\newcommand{\That}{\widehat{T}}
\newcommand{\ehat}{\widehat{e}}
\newcommand{\altil}{\widetilde{\alpha}}
\newcommand{\Scal}{\mathcal{S}}
\newcommand{\Pcal}{\mathcal{P}}
\newcommand{\Mcal}{\mathcal{M}}
\newcommand{\ybar}{\overline{y}}
\newcommand{\ybf}{\mathbf{y}}
\newcommand{\xbf}{\mathbf{x}}
\newcommand{\gbf}{\mathbf{g}}
\newcommand{\hbf}{\mathbf{h}}
\newcommand{\dbf}{\mathbf{d}}
\newcommand{\om}{\omega}
\newcommand{\opsign}{\operatorname{sign}}
\newcommand{\opcoc}{\operatorname{cocyc}}
\newcommand{\opcyc}{\operatorname{cyc}}
\begin{document}

\title{Kauffman bracket polynomials, perfect matchings and cluster variables}

\author{}
\address{}
\curraddr{}
\email{}
\thanks{}

\author{Weiqing Tian}
\address{School of Mathematics, Nanjing University, Nanjing 210093, China}
\curraddr{}
\email{weiqingtian07@gmail.com}
\thanks{}

\subjclass[]{}

\keywords{}

\date{}

\dedicatory{}

\begin{abstract}
We introduce a class of links whose bracket polynomials admit an expansion over perfect matchings of a plane bipartite graph. This class includes 2-bridge links, pretzel links, and Montesinos links. Our first main result (Theorem~A) provides a partial answer to a question posed by Kauffman concerning the connection between spanning tree expansions of the Jones polynomial and the Clock Theorem. Building on Theorem~A, we apply our framework to cluster theory and prove in Theorem~B that the bracket polynomials of links in this class can be realized as specializations of the F-polynomials of certain cluster variables. Theorem~B generalizes several earlier results. We also present several applications and illustrative examples.
\end{abstract}

\maketitle

\section{Introduction}
Among the various polynomial invariants of links, the Jones polynomial and the Alexander polynomial are among the most fundamental and widely studied. In \cite{thistlethwaite1987spanning}, it was shown that the Jones polynomial admits an expansion over the spanning trees of a plane graph $G$. These spanning trees have since been used to construct combinatorial models for signed Tutte polynomials \cite{kauffman1989tutte}, Khovanov homology \cite{wehrli2008spanning,champanerkar2009spanning}, and Floer homology \cite{baldwin2012combinatorial,greene2013spanning}. On the other hand, in his foundational book \cite{kauffman1983formal}, Kauffman introduced the notion of Kauffman states for a link diagram $L$ and used the clock theorem to reformulate the Alexander polynomial as a state sum. These Kauffman states have also proven useful in the context of Floer homology \cite{kauffman2016alexander,ozsvath2018kauffman}. The clock theorem was later interpreted in terms of perfect matchings on planar graphs \cite{cohen2014kauffman,propp2002lattice}, and further generalized to more complex surfaces \cite{hine2018clock}.

The relationship between the spanning tree model and the Kauffman state model is of particular interest, as there exists a natural bijection between Kauffman states of $L$ and spanning trees of a related graph. In \cite{kauffman2006remarks}, Kauffman posed the question of whether there is a deeper connection between the Jones polynomial and the Kauffman state model. The author of \cite{cohen2012determinant} suggested that such a relationship may be established via perfect matchings, at least in the case of pretzel links.

In parallel with the development of these combinatorial frameworks, the theory of cluster algebras introduced by Fomin and Zelevinsky \cite{fomin2002cluster} has emerged as a new perspective in the study of knot invariants. Notably, Lee and Schiffler \cite{lee2019cluster} showed that the Jones polynomials of 2-bridge links can be realized as specializations of F-polynomials associated with specific cluster variables. A similar result was obtained by Nagai and Terashima \cite{nagai2020cluster} for the Alexander polynomials of 2-bridge links. The results in \cite{nagai2020cluster} were later generalized to a broader class of links in a series of works \cite{bazier2022knot,bazier2024knot}, where the Kauffman state model of the Alexander polynomial played a central role; see also \cite{meszaros2024dimer} for related developments. Meanwhile, the result of Lee and Schiffler has inspired further research, particularly on $q$-deformed rational numbers \cite{morier2020q,morier2024burau}, though its potential for extension to more general link types remains an open question.

It is perhaps not surprising that these two questions are in fact closely related. This paper aims to demonstrate that the resolution of the first question—namely, under what conditions the data encoded by the spanning tree model can also be captured by the Kauffman state model—provides the key insight for addressing the second. As a consequence, several previously known results admit generalizations to broader classes of links, and certain phenomena observed in earlier studies can now be understood more deeply.  

In this paper, we focus primarily on the Kauffman bracket polynomial, introduced by Kauffman in \cite{kauffman1987state}, which is closely related to the Jones polynomial. Our first main result is stated as follows:
\begin{maintheorem}
Let $\Ltil$ be a link diagram, and let $L$ be its corresponding link universe. If there exists a distinguished segment $s$ and an ordered labeling $l$ such that the link configuration $L_{s; l}$ is admissible, then the Kauffman bracket polynomial of $\Ltil$ admits a perfect matching expansion:
\[\left\langle \Ltil \right\rangle=\sum_{\Pcal}\prod_{h \in \Pcal} \altil (h)\big|_K,\]
where the sum is taken over all perfect matchings of the balanced overlaid checkerboard graph $G^b_{L, s}$, and each half-edge $h_{(c_j, R)}$ is assigned an activity letter $\alpha(h)$ according to the following rule:
\begin{itemize}
	\item [\textbf{--}] If $c_j$ is the crossing point with the lowest label among all the crossing points incident to $R$, then $\alpha \big(h_{(c_j, R)}\big)=L$.
	\item [\textbf{--}] Otherwise, $\alpha \big(h_{(c_j, R)}\big)=D$.
\end{itemize}
\end{maintheorem}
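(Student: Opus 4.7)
The plan is to start from Kauffman's original state sum for the bracket and reorganize it in two stages: first via Thistlethwaite's spanning tree expansion applied to the black checkerboard graph $G_L$, and then via a Temperley-type bijection that transports the spanning trees of $G_L$ (rooted at the segment $s$) onto perfect matchings of the balanced overlaid graph $G^b_{L,s}$. The admissibility of $L_{s;l}$ will be used twice: to ensure the bijection is well defined crossing-by-crossing, and to guarantee that Tutte-type activities pull back to the half-edge letters $L$ and $D$.

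First I would recall Kauffman's state sum $\langle \widetilde{L}\rangle = \sum_{\sigma} A^{a(\sigma)-b(\sigma)}(-A^2-A^{-2})^{\ell(\sigma)-1}$, where $\sigma$ ranges over the $2^n$ smoothings of the crossings of $\widetilde{L}$ and $a,b,\ell$ record the numbers of $A$-smoothings, $B$-smoothings and resulting loops. Choosing the distinguished segment $s$ fixes a root of $G_L$ and the labeling $l$ induces an edge ordering; the spanning-tree regrouping of \cite{thistlethwaite1987spanning} then produces $\langle \widetilde{L}\rangle = \sum_{T} \prod_{e \in E(G_L)} \mu_T(e)$, where $\mu_T(e)$ is a local factor determined by whether $e$ is a tree edge or a cotree edge and whether it is active or inactive with respect to $l$.

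Next I would invoke a Temperley-type correspondence between the rooted spanning trees of $G_L$ and the perfect matchings of the overlaid checkerboard graph, in the spirit of \cite{propp2002lattice,cohen2014kauffman}. Each crossing $c_j$ contributes two half-edges $h_{(c_j, R)}$, one for each face of the checkerboard coloring around it, and the bijection sends the tree edge incident to $c_j$ to the unique matching edge at $c_j$. Here the admissibility hypothesis on $L_{s;l}$ is what I expect to do the heavy lifting: it should be exactly the combinatorial condition that forces the local correspondence to be a bijection at every crossing (so that no face is overfilled or starved), whence the desired global bijection between rooted spanning trees of $G_L$ and perfect matchings of $G^b_{L,s}$ follows.

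Finally I would check that the weights agree under this bijection. The rule that $\alpha(h_{(c_j,R)}) = L$ exactly when $c_j$ is the minimally labeled crossing on $\partial R$ is the standard active/inactive dichotomy of Tutte, rephrased for half-edges in place of edges; a four-case verification at each crossing (tree vs.\ cotree, active vs.\ inactive) should confirm that $\altil(h)|_K$ reproduces the corresponding factor of $\mu_T(e)$, after which summation over $T$ gives the statement. The main obstacle I anticipate lies precisely in this last step: Thistlethwaite's weights live on whole edges of $G_L$, whereas the letters $L$ and $D$ live on half-edges, so one must split each edge weight into two half-edge contributions and confirm that their product reconstitutes the edge weight after the specialization $|_K$. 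Once this local identity is secured uniformly under the admissibility hypothesis, the theorem drops out by summation.
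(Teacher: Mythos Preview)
Your outline has a genuine gap: you misidentify where admissibility is needed. The bijection between (rooted) spanning trees of $G_L$ and perfect matchings of $G^b_{L,s}$ holds for \emph{any} link universe with a distinguished segment; this is Proposition~\ref{prop:mathingtreecor} in the paper and requires no hypothesis beyond the choice of $s$. So admissibility is not what makes the bijection work, and your sentence ``admissibility \ldots\ should be exactly the combinatorial condition that forces the local correspondence to be a bijection at every crossing'' is off target.

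The real difficulty, which your last paragraph brushes past, is that Thistlethwaite's activity letter $\alpha_T(e)$ depends on the tree $T$: an edge is internally active iff it is lowest in its fundamental cocycle \emph{with respect to $T$}. The rule in the theorem, ``$c_j$ is the lowest-labeled crossing incident to $R$'', makes no reference to any tree. For the perfect-matching expansion to make sense, one must show that for every half-edge $h$ and every perfect matching $\Pcal$ containing $h$, the activity of $\eta(h)$ in the corresponding double spanning tree $\eta(\Pcal)$ is the \emph{same}; only then can one speak of $\alpha(h)$ without ambiguity. This is what the paper calls the EI-property, and it can fail for a generic labeling (the paper gives a three-crossing counterexample). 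Your proposed ``four-case verification at each crossing'' cannot establish this, because the obstruction is global: whether $e$ is lowest in its cocycle depends on all of $T$, not just the local picture at $c_j$.

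The paper's route is to define admissibility recursively (an admissible configuration is one reached from the Hopf link by admissible bigon extensions) and then to prove by induction on the number of extensions both that the EI-property holds and that the activity letter coincides with the lowest-crossing rule (Proposition~\ref{prop:activityrule}). The inductive step rests on a careful analysis (Lemmas~\ref{lem:bigonexcontro}--\ref{lem:bigonexmain}, Proposition~\ref{prop:bigonexpropcore}, Theorem~\ref{thm:localcri}) of how fundamental cocycles transform under a bigon extension. Your proposal contains no mechanism for controlling activities across all trees simultaneously, so as written it would not go through.
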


This theorem provides a partial answer to the first question and generalizes a main result from \cite{cohen2012determinant}. It is stated in the text as Theorem~\ref{thm:perexpan}. The notion of an \textit{admissible link configuration} in the statement refers to a specific construction in which the spanning tree model and the Kauffman state model align in a compatible way. This construction was used implicitly in \cite{cohen2012determinant}, without specifying when such compatibility is possible or how to determine it. In Section 4, we present two methods for constructing admissible link configurations and prove that certain classical families of links--such as 2-bridge links, pretzel links, and Montesinos links--admit such a structure.\\

In the context of cluster algebras, our second main result is stated as follows:
\begin{maintheorem}
Let $\Ltil$ be a link diagram, and let $L_{s;l}$ be an admissible link configuration. Suppose that $\Ltil$ is alternating and all of its crossing points have negative signs. Then there exists a cluster variable $x^{Q}_{q;t}$ in the cluster algebra $\mathcal{A}(Q_{L,s})$, such that the bracket polynomial can be expressed as a specialization of the F-polynomial $F^{Q}_{q;t}$ via the bracket polynomial specialization. More precisely, we have
	\[\left\langle \Ltil \right\rangle=\om(\Scal_{\min})\cdot F^{Q}_{q;t}(\ybf)\big|_K,\]
where $\Scal_{\min}$ denotes the minimal Kauffman state of $L_s$, $\om(\Scal_{\min})$ is its weight, and the bracket polynomial specialization is given by the following rule after orienting each transposable segment $s_j$ such that the black region lies to the left of $s_j$.
\[
y_j\big|_K=
\left\{
\begin{array}
	{ll}
	A^8, & \textup {if $s_j$ goes from the lowest crossing point $c_1$ to a higher one};\\
	-A^4, & \textup {if $s_j$ goes from a lower crossing point (other than $c_1$) to a higher one};\\
	-A^{-4}, & \textup {if $s_j$ goes from a higher crossing point to a lower one, and the latter is not active};\\
	A^{-8}, & \textup {if $s_j$ goes from a higher crossing point to a lower one, and the latter is active}.
\end{array}
\right.
\]
\end{maintheorem}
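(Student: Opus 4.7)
The plan is to combine the perfect matching expansion of Theorem~A with the snake--graph formula for F-polynomials of cluster variables, in the spirit of the cited works of Musiker--Schiffler--Williams, Lee--Schiffler, and Nagai--Terashima. Under the alternating and all--negative--crossing hypothesis, the activity letters $L$ and $D$ from Theorem~A specialize to honest monomials in $A$ with a combinatorially controlled sign pattern, so that $\langle \Ltil \rangle = \sum_{\Pcal} \prod_{h \in \Pcal} \altil(h)\big|_K$ becomes a Laurent polynomial in $A$ indexed by perfect matchings of $G^b_{L,s}$. The task is then to recognize this sum as a specialization of the F-polynomial of a suitable cluster variable.

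First I will construct the quiver $Q_{L,s}$ directly from the admissible link configuration: its vertices are the transposable segments $s_j$, and its arrows are read off from the local adjacencies of half--edges around each face of $G^b_{L,s}$ (equivalently, from the dual of a polygon triangulation encoding the admissible configuration). I will then take $x^Q_{q;t}$ to be the cluster variable whose associated snake graph $\mathcal{G}_{q;t}$ is isomorphic, as a planar graph with a tile decomposition, to $G^b_{L,s}$. Under this isomorphism, perfect matchings correspond bijectively, and the minimum matching of the snake graph --- the one responsible for the monomial $1$ in $F^Q_{q;t}(\ybf)$ --- is identified with the perfect matching $\Pcal_{\min}$ associated to the minimal Kauffman state $\Scal_{\min}$.

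The next step is a tile--by--tile comparison of weights. Writing $F^Q_{q;t}(\ybf) = \sum_{\Pcal} \prod_j y_j^{m_j(\Pcal)}$, where $m_j(\Pcal)$ records the net difference between $\Pcal$ and $\Pcal_{\min}$ at the tile indexed by the transposable segment $s_j$, I will verify that the specialization $y_j\big|_K$ in the statement equals precisely the ratio $\bigl(\prod_h \altil(h)|_K\bigr)_{\Pcal} / \bigl(\prod_h \altil(h)|_K\bigr)_{\Pcal_{\min}}$ when $s_j$ is flipped. The four cases in the rule correspond to the four local possibilities for the pair of crossings bounding $s_j$: whether the lower crossing is the global minimum $c_1$ (giving $A^{\pm 8}$) or not (giving $\pm A^{\pm 4}$), and whether that lower crossing is active in the Thistlethwaite sense (fixing the sign). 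The prefactor $\om(\Scal_{\min})$ is nothing but the activity product on $\Pcal_{\min}$, so once it is reintroduced, $\om(\Scal_{\min})\cdot F^{Q}_{q;t}(\ybf)\big|_K$ reproduces the full sum supplied by Theorem~A.

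The principal obstacle I anticipate is the sign bookkeeping in the third case, which distinguishes active from inactive lower endpoints. The alternating and all--negative--crossing hypotheses are precisely what make a consistent ``black region on the left'' orientation of every transposable segment available, and what force the signs arising in the four cases to match those produced by successive tile flips in the snake graph. Verifying this local compatibility --- essentially that Thistlethwaite's notion of spanning--tree activity translates, via the bijection with Kauffman states underlying Theorem~A, into the local flip statistic on the snake graph --- will require careful case analysis at a single tile; once established there, it propagates multiplicatively across every matching and completes the identification of $\langle \Ltil \rangle$ with $\om(\Scal_{\min})\cdot F^{Q}_{q;t}(\ybf)\big|_K$.
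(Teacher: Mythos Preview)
Your overall architecture---factor out the weight $\om(\Scal_{\min})$ of the minimal matching and then check that the ratio of weights across a single face flip equals the prescribed specialization value $y_j|_K$---is exactly what the paper does (Lemma~\ref{lem:specall} and the weight-ratio computation in the proof of Theorem~\ref{thm:fpolyspacial}). So the second half of your plan is sound and matches the paper.

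The gap is in the first half: your identification of $F^Q_{q;t}$ with the perfect-matching partition function of $G^b_{L,s}$ via a snake-graph isomorphism. Snake graphs in the Musiker--Schiffler--Williams sense are chains of square tiles glued edge-to-edge, and they compute cluster variables for quivers coming from surface triangulations. For 2-bridge links this works (the quiver is type $A$ and $G^b_{L,s}$ really is a snake graph; this is the Lee--Schiffler setting), but for pretzel or Montesinos links the balanced overlaid checkerboard graph is \emph{not} a snake graph, and the quiver $Q_{L,s}$ need not arise from a triangulated surface. So the sentence ``take $x^Q_{q;t}$ to be the cluster variable whose associated snake graph is isomorphic to $G^b_{L,s}$'' does not define anything in general, and no amount of tile-by-tile bookkeeping downstream can repair that.

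The paper avoids this entirely. It never invokes snake graphs; instead it \emph{constructs} the cluster variable by an explicit mutation sequence $\mu_{rd}$ read off from a bigon reduction sequence of the admissible configuration (each bigon reduction contributing one or two mutations, Proposition~\ref{prop:quiverdel}). The identity $F^Q_{q;t}(\ybf)=\Mcal_{L,s}(\ybf)$ (the states lattice polynomial) is then proved by induction on the length of this sequence, using the Derksen--Weyman--Zelevinsky mutation formulas for $F$-polynomials, $g$-vectors, and $h$-vectors (Theorem~\ref{thm:variable}). This inductive mutation argument is what replaces the snake-graph machinery and is the step your proposal is missing.

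One smaller point: you describe the $A^{\pm 8}$ cases as ``whether the lower crossing is the global minimum $c_1$,'' but the fourth case ($A^{-8}$) is governed by whether the lower crossing is \emph{active} in the paper's sense (two $L$-labelled half-edges forming a corner), not by whether it is $c_1$. The classification of active crossings (Corollary~\ref{cor:actclass}) and the direction analysis in Figure~\ref{fig:weightratio} are what pin down all four cases.
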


This theorem is stated as Theorem~\ref{thm:fpolyspacial} and generalizes a key result from \cite{lee2019cluster}. The term \textit{active crossing points}, as used in the statement, refers to certain distinguished crossings that play a crucial role in the construction of admissible link configurations. The specialization values given in \cite{lee2019cluster} correspond to the first part of our specialization formula. The second part, involving additional segment types, does not appear in the case of 2-bridge links, which explains why it was absent in their setting. 

The paper is organized as follows. In Section~2 we review some basic terminology and results from knot theory and graph theory. Section~3 introduces a condition called the \textit{EI-property}, which governs the compatibility between the spanning tree model and the Kauffman state model. We also define the state lattice polynomial, which serves as a preparatory step toward the specialization. In Section~4, we describe two constructions of admissible link configurations and prove the first main theorem. The inclusion of classical link families—such as 2-bridge, pretzel, and Montesinos links—is also established there. In Section~5, after recalling essential notions from cluster theory, we prove the second main theorem. Finally, some applications and illustrative examples are presented at the end of Section~5.

\section{Preliminaries}

We begin by summarizing essential definitions and known results in knot theory and graph theory, which will be used throughout the paper.

\subsection{Background on knot theory}

We begin by recalling the definition of the Kauffman bracket polynomial and describing how it relates to the Jones polynomial.

\begin{definition}
A \textit{knot} is a subset of $\mathbb{R}^3$ that is homeomorphic to a circle. A \textit{link} with $r$ components is a subset of $\mathbb{R}^3$ that is homeomorphic to a disjoint union of $r$ circles. 
\end{definition}

\begin{definition}
A \textit{link diagram} $\Ltil$ is a projection of the link onto the plane that is injective except at a finite number of double points, called \textit{crossing points}, together with \textit{over-under information} at each crossing to indicate which strand passes over and which passes under. The corresponding \textit{link universe} $L$ is obtained from $\Ltil$ by discarding the over-under information at all crossing points.
\end{definition}

\begin{definition}
The \textit{Kauffman bracket polynomial} $\langle \Ltil \rangle$ is a Laurent polynomial in the variable $A$ associated to an unoriented link diagram $\Ltil$, defined recursively by the following rules:

\begin{enumerate}
	\item \textbf{Smoothing relation:}
	\[
	\left\langle
	\begin{tikzpicture}[baseline=0.1ex,scale=0.25]
		\draw[line width=0.8pt] (-0.2,-0.2) -- (1.2,1.2);
		\draw[line width=0.8pt, white, line width=3pt] (1,0) -- (0,1);
		\draw[line width=0.8pt] (1.2,-0.2) -- (-0.2,1.2);
	\end{tikzpicture}
	\right\rangle
	=
	A \left\langle
	\begin{tikzpicture}[baseline=0.1ex,scale=0.4]
		\draw[line width=0.8pt] (0,-0.2) arc (160:20:0.5);
		\draw[line width=0.8pt] (0,0.8) arc (200:340:0.5);
	\end{tikzpicture}
	\right\rangle
	+
	A^{-1} \left\langle
	\begin{tikzpicture}[baseline=0.1ex,scale=0.4]
		\draw[line width=0.8pt] (0,-0.2) arc (-70:70:0.5);
		\draw[line width=0.8pt] (1,-0.2) arc (250:110:0.5);
	\end{tikzpicture}
	\right\rangle
	,\]
	where the diagrams represent a crossing, the $A$-smoothing, and the $B$-smoothing, respectively.
	
	\item \textbf{Stabilization:}
	\[
	\left\langle \Ltil \sqcup \bigcirc \right\rangle = (-A^2 - A^{-2}) \left\langle \Ltil \right\rangle
	,\]
	where $ \Ltil \sqcup \bigcirc $ denotes the disjoint union of $\Ltil$ with an unknotted circle.
	
	\item \textbf{Normalization:}
	\[
	\left\langle \bigcirc \right\rangle = 1
	,\]
	where \( \bigcirc \) is the trivial diagram of a single unknotted circle.
\end{enumerate}
\end{definition}

\begin{figure}[hbp]
	\centering
	\subfigure[]{
		\includegraphics[width=0.6\textwidth, height=0.04\textheight]{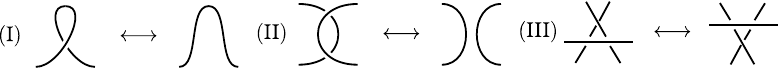}}
	\hspace{2mm}
	\subfigure[]{
		\includegraphics[width=0.18\textwidth, height=0.04\textheight]{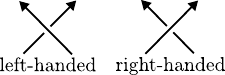}}
	\hspace{2mm}
	\subfigure[]{
		\includegraphics[width=0.08\textwidth, height=0.04\textheight]{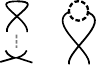}}
	\caption{(a) The three Reidemeister moves. (b) The left-handed and right-handed crossings. (c) A curl and a nugatory crossing.}
	\label{fig:Reidemeistermove}
\end{figure}

\begin{definition}
Let $D_{\Ltil}$ be an oriented diagram associated to $\Ltil$. The \textit{writhe} number $\om(D_{\Ltil})$ of the oriented diagram $D_{\Ltil}$ is defined as the number of right-handed crossings minus the number of left-handed crossings (see subfigure (b) of Figure~\ref{fig:Reidemeistermove} for an illustration of crossing types). Then the Jones polynomial $V_{\Ltil}(t)$ of $\Ltil$ is defined via the Kauffman bracket polynomial as 
\[V_{\Ltil}(t)=\Big[(-A)^{-3\om(D_{\Ltil})} \left\langle \Ltil \right\rangle \Big]\Big|_{A=t^{-1/4}},\]
where the right side denotes the evaluation of the rescaled bracket polynomial at $A=t^{-1/4}$.
\end{definition}

\begin{remark}
It is well known that two link diagrams represent isotopic links if and only if they are related by a finite sequence of Reidemeister moves (see subfigure (a) of Figure~\ref{fig:Reidemeistermove} for illustrations). The Kauffman bracket polynomial is invariant under Reidemeister moves (II) and (III). Without loss of generality, throughout this paper, we assume the link universe $L$ is connected and contains no curls or nugatory crossings (see subfigure (c) of Figure~\ref{fig:Reidemeistermove}).
\end{remark}

\begin{definition}
A link diagram $\Ltil$ is said to be \textit{alternating} if, as one traverses each component of the diagram according to its orientation, the crossings alternate between over-crossings and under-crossings. A link universe $L$ is said to be \textit{prime-like} if there does not exist a simple closed curve $S$ in the plane that intersects $L$ in exactly two points such that $L$ contains crossings both inside and outside of $S$.
\end{definition}

\subsection{Background on graph theory}

We provide some graph-theoretic background necessary for establishing the connection between link diagrams and their associated planar graphs. We then recall the definition of \textit{Tutte’s activity letter}, introduced in \cite{tutte1954contribution}, which serves as a key tool in the reformulation of the bracket polynomial.

\begin{definition}
	A \textit{graph} $G=(V,E)$ consists of a finite set of \textit{vertices} $V$, a finite set of \textit{edges} $E$, and an \textit{incidence function} that assigns to each edge an unordered pair of vertices. The \textit{endpoints} of an edge $e$ are the vertices incident to $e$.
	A \textit{loop} is an edge whose endpoints are the same vertex. An \textit{isthmus} is an edge whose deletion increases the number of connected components of the graph. 
	
	A \textit{tree} is a connected graph with no cycles. A \textit{spanning subgraph} of a graph $G$ is a subgraph that includes all the vertices of $G$. A \textit{spanning tree} is a spanning subgraph that is also a tree. A \textit{rooted spanning tree} $T(v)$ is a spanning tree with a distinguished vertex $v$, called the root of $T$.
\end{definition}

\begin{definition}
     Let $T$ be a spanning tree of $G$. An edge $e$ is said to be \textit{internal} with respect to $T$ if it is in $T$ and \textit{external} otherwise. 
     
     If $e \in T$, then removing $e$ disconnects $T$ into two components. The \textit{fundamental cocycle} of the internal edge $e$ is the set of edges $f$ such that $T \backslash \{ e\} \cup \{f\}$ is a spanning tree; equivalently, it consists of all edges that reconnect the two components of $T \backslash \{ e\}$. We denote the fundamental cocycle of $e$ with respect to $T$ by $\opcoc(e, T)$. 
     
     If $e \notin T$, then $T \cup \{ e\}$ contains a unique cycle. The \textit{fundamental cycle} of the external edge $e$ is the set of all edges belonging to this unique cycle, and is denoted by $\opcyc(e, T)$. 
\end{definition}

\begin{definition}
	 Let $G$ be a connected graph with a linear ordering on its set of edges. Let $T$ be a spanning tree in $G$. An internal edge $e_i \in T$ is said to be \textit{internally active} with respect to $T$ if it is the lowest-ordered edge among all edges in $\opcoc(e_i, T)$. Otherwise, $e_i$ is called \textit{internally inactive}. 
	 
	 Similarly, an external edge $e_j \notin T$ is said to be \textit{externally active} with respect to $T$ if it is the lowest-ordered edge among all edges in $\opcyc(e_j, T)$. Otherwise, $e_j$ is called \textit{externally inactive}.
\end{definition}

\begin{definition}
	Let $L$ be a link universe. Perform a checkerboard coloring of the planar regions determined by $L$, such that adjacent regions receive different colors and the unbounded region is colored white.
	
	The \textit{checkerboard graph} $G_{L}$ is defined as follows: place a vertex in each black region. Connect two vertices by an edge $e_c$ if and only if their corresponding black regions share a crossing point $c$ of $L$.
	
	The \textit{dual checkerboard graph} $\Gbar_{L}$ is the planar dual of $G_{L}$: its vertices correspond to the faces of $G_{L}$, and an edge $\ebar_c$ connects two vertices precisely when their corresponding faces share the edge $e_c$ in $G_{L}$.
\end{definition}

\begin{definition}
	Let $L$ be a link universe with $n$ crossing points. An \textit{ordered labeling} $l$ is a labeling of the $n$ crossing points with numbers from $1$ to $n$. Each ordered labeling of the crossing points induces a corresponding linear order on the edges of $G_L$ and $\Gbar_L$. We denote the edge $e_c$ (resp. $\ebar_c$) by $e_i$ (resp. $\ebar_i$) if the crossing point $c$ is assigned the label $i$.
\end{definition}

The following lemma is a classical result in graph theory; a detailed discussion can be found in Section 4.5 of \cite{gross2005graph}.

\begin{lemma}\label{lem:cyccocycdual}
	Let $L_l$ be a link universe with an ordered labeling $l$. Let $G_L$ and $\Gbar_L$ denote the checkerboard graph and its dual, respectively.
	\begin{itemize}
		\item [(i)] Let $T$ be a spanning tree of $G_L$. Then the set of edges $\ebar_i$ such that $e_i \notin T$ forms a spanning tree in $\Gbar_L$. This tree is called the \textit{dual spanning tree} of $T$ and is denoted by $\Tbar$.
		\item [(ii)] Let $T$ be a spanning tree of $G_L$, and suppose $e_i \in T$ and $e_j \notin T$. Then
		\begin{itemize}
			\item [(a)] $e_j \in \opcoc(e_i, T)$ if and only if $\ebar_j \in \opcyc(\ebar_i, \Tbar)$.
			\item [(b)] $e_i \in \opcyc(e_j, T)$ if and only if $\ebar_i \in \opcoc(\ebar_j, \Tbar)$.
		\end{itemize} 
	\end{itemize}
\end{lemma}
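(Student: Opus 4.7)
The plan is to invoke classical planar duality, of which this lemma is essentially a special case: cycles in a plane graph correspond bijectively to bonds (minimal edge cuts) in its planar dual, and the spanning tree--cotree correspondence $T \mapsto \Tbar$ is an involution. Both parts reduce to careful bookkeeping of this correspondence, combined with Euler's formula for the edge count.

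For part (i), I would first verify the cardinality: $T$ has $|V(G_L)| - 1$ edges, so the complementary set has $|E(G_L)| - |V(G_L)| + 1$ edges. Applying Euler's formula $|V(G_L)| - |E(G_L)| + |V(\Gbar_L)| = 2$ (recalling that the vertices of $\Gbar_L$ correspond to the faces of $G_L$, including the unbounded one) rewrites this quantity as $|V(\Gbar_L)| - 1$, which is exactly the size of a spanning tree of $\Gbar_L$. To promote this set to a spanning tree, I would verify acyclicity: any cycle $C$ in $\Gbar_L$, drawn in the plane, corresponds to a bond in $G_L$ consisting of precisely those primal edges whose duals lie on $C$. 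If every edge of $C$ belonged to $\{\ebar_i : e_i \notin T\}$, then this bond would be disjoint from $T$, so $T$ would remain a subgraph of the disconnected graph $G_L$ minus this bond, contradicting the spanning connectivity of $T$. Correct cardinality together with acyclicity forces the set to be a spanning tree.

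For part (ii)(a), the key observation is that $e_j \in \opcoc(e_i, T)$ precisely when $e_j$ belongs to the unique bond of $G_L$ separating the two components of $T \setminus \{e_i\}$, one edge of which is $e_i$ itself. By planar duality, this bond corresponds to a simple cycle $C^*$ in $\Gbar_L$ passing through $\ebar_i$. Every other dual edge on $C^*$ must have its primal counterpart outside $T$, for otherwise removing the corresponding primal edges would further fragment $T \setminus \{e_i\}$, so every such dual edge lies in $\Tbar$. By uniqueness of the fundamental cycle in $\Tbar \cup \{\ebar_i\}$, $C^*$ coincides with $\opcyc(\ebar_i, \Tbar)$. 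Part (ii)(b) follows by applying (ii)(a) to the pair $(\Gbar_L, \Tbar)$ after noting that the double dual returns the original graph and spanning tree.

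The main obstacle, insofar as there is one, lies in making the bond--cycle correspondence explicit at the level of individual edges so that membership can be tracked through the duality; beyond this, no ideas deeper than classical planar matroid duality are needed, which is why a textbook citation suffices in the paper as written.
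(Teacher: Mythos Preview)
Your argument is correct and is precisely the standard planar-duality proof that the cited reference would supply; the paper itself gives no proof of this lemma, only the textbook citation to Section~4.5 of \cite{gross2005graph}. Since you are essentially reconstructing that classical argument (Euler count for the cotree size, cycle--bond correspondence for acyclicity and for matching fundamental cycles with fundamental cocycles), there is nothing to compare.
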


\begin{proposition}\label{prop:dualedgeactletter}
	Let $L_l$ be a link universe with an ordered labeling $l$. Let $G_L$ and $\Gbar_L$ denote the checkerboard graph and its dual, respectively. Let $T \subset G_L$ be a spanning tree, and $\Tbar \subset \Gbar_L$ its dual spanning tree. Then
	\begin{itemize}
	\item [(a)] $e_i$ is internally (resp. externally) active with respect to $T$ if and only if $\ebar_i$ is externally (resp. internally) active with respect to $\Tbar$.
	\item [(b)] $e_i$ is internally (resp. externally) inactive with respect to $T$ if and only if $\ebar_i$ is externally (resp. internally) inactive with respect to $\Tbar$.
\end{itemize} 
\end{proposition}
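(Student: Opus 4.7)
The plan is to derive this proposition directly from Lemma~\ref{lem:cyccocycdual}, since the only subtlety is to keep track of which kind of activity (internal vs.\ external) an edge carries on each side of the duality. The key observation is that the ordered labeling $l$ induces the \emph{same} linear order on the edges of $G_L$ and $\Gbar_L$ via the identification $e_i \leftrightarrow \ebar_i$, so ``being the lowest-ordered edge'' in either a cycle or a cocycle is preserved by the correspondence $e_i \mapsto \ebar_i$.

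First I would handle the swap of roles. By Lemma~\ref{lem:cyccocycdual}(i), the dual spanning tree $\Tbar$ consists exactly of those $\ebar_i$ with $e_i \notin T$; equivalently, $e_i$ is internal with respect to $T$ if and only if $\ebar_i$ is external with respect to $\Tbar$, and vice versa. This already matches the left/right pairing in parts (a) and (b) of the statement.

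Next I would prove (a). Suppose $e_i \in T$ is internally active with respect to $T$, so $e_i$ is the lowest-ordered edge in $\opcoc(e_i, T)$. By Lemma~\ref{lem:cyccocycdual}(ii)(a), the assignment $e_j \mapsto \ebar_j$ is a label-preserving bijection from $\opcoc(e_i, T)$ onto $\opcyc(\ebar_i, \Tbar)$; in particular $\ebar_i$ itself lies in $\opcyc(\ebar_i, \Tbar)$ and corresponds to $e_i$. Since the ordering on edges is inherited from $l$ on both sides, the lowest-ordered element of $\opcoc(e_i, T)$ maps to the lowest-ordered element of $\opcyc(\ebar_i, \Tbar)$, which gives that $\ebar_i$ is externally active with respect to $\Tbar$. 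The reverse implication is identical, reading the bijection backwards. The case where $e_i \notin T$ is handled in the same way, now using Lemma~\ref{lem:cyccocycdual}(ii)(b) to identify $\opcyc(e_i, T)$ with $\opcoc(\ebar_i, \Tbar)$ via a label-preserving bijection sending $e_i$ to $\ebar_i$.

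Finally, part (b) follows immediately from (a), since inactivity is just the negation of activity within the appropriate class (internal or external) and the internal/external roles of $e_i$ and $\ebar_i$ are already pinned down by the first step. There is no real obstacle here: the entire argument is an application of the duality lemma together with the trivial observation that the labeling $l$ is shared between $G_L$ and $\Gbar_L$. The only care needed is in stating the equivalence in the correct direction (internal $\leftrightarrow$ external) on the two sides.
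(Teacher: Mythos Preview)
Your proof is correct and follows essentially the same approach as the paper's own proof: both use Lemma~\ref{lem:cyccocycdual} to identify $\opcoc(e_i,T)$ with $\opcyc(\ebar_i,\Tbar)$ (and dually) via the label-preserving bijection $e_j \mapsto \ebar_j$, and then observe that ``lowest-ordered'' is preserved under this bijection. Your write-up is somewhat more explicit about the internal/external swap and the deduction of (b) from (a), but there is no substantive difference in strategy.
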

\begin{proof}
If $e_i \in T$, then by Lemma~\ref{lem:cyccocycdual}, the edges in the fundamental cycle $\opcyc(\ebar_i, \Tbar)$ are exactly the duals of the edges in the fundamental cocycle $\opcoc(e_i, T)$. It follows that $e_i$ is the lowest-ordered edge in $\opcoc(e_i, T)$ if and only if its dual $\ebar_i$ is the lowest-ordered in $\opcyc(\ebar_i, \Tbar)$. The case for $e_i \notin T$ follows similarly. This completes the proof.
\end{proof}

\begin{definition}
	Let $\Ltil$ be a link diagram, we associate to each crossing point $c$ a sign $\opsign(c) \in \{+, -\}$, determined by both the over-under information at $c$ and the checkerboard coloring, as illustrated in Figure~\ref{fig:trefoilchecker}. 
	
	The \textit{signed checkerboard graph} $\widetilde G_{\Ltil}$ is obtained from the unsigned checkerboard graph $G_{L}$ by assigning to each edge $e_c$ a sign $\opsign(e_c) \in \{+, -\}$, defined by $\opsign(e_c) \coloneqq \opsign(c)$. 
	
	Similarly, the \textit{signed dual checkerboard graph} $\widetilde {\Gbar}_{\Ltil}$ is obtained by assigning to each edge $\ebar_c$ the opposite sign $\opsign(\ebar_c) \coloneqq -\opsign(c)$.
\end{definition}

\begin{figure}[htp]
	\centering
	\includegraphics[width=1\textwidth]{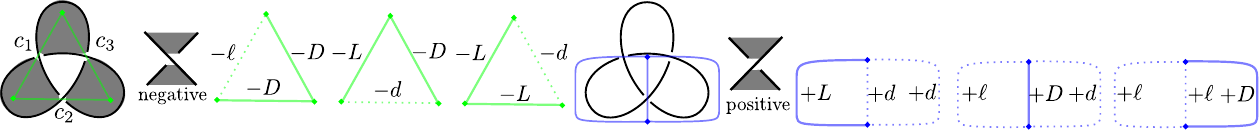}
	\caption{The signed checkerboard graph (in green) and its dual graph (in blue) associated with the trefoil, shown together with spanning trees. Each edge is labeled with its corresponding signed activity letter.}
	\label{fig:trefoilchecker}
\end{figure}

\begin{definition}
Let $\Gtil$ be a signed graph, and let $G$ be the underlying graph without sign. Let $T$ be a spanning tree of $G$. For each edge $e \in G$, we assign an \textbf{activity letter} relative to $T$:
\begin{itemize}
	\item [\textbf{--}] $L$ if $e$ is internally active, and $D$ if internally inactive. 
	\item [\textbf{--}] $\ell$ if $e$ is externally active, and $d$ if externally inactive.
\end{itemize} 
We denote this letter by $\alpha_T(e)$. The \textbf{signed activity letter} is then defined as 
\[\altil_T(e) \coloneqq \opsign(e)\alpha_T(e),\]
where $\opsign(e) \in \{+,-\}$ is the sign of the edge $e$ in $\widetilde G$.
\end{definition}

\begin{table}[h!]
	\centering
	\renewcommand{\arraystretch}{1.2}
	\begin{tabularx}{\textwidth}{@{}>{\raggedright\arraybackslash}p{6cm}*{8}{>{\centering\arraybackslash}X}@{}}
		\toprule
	signed activity letters $\altil$ &	$+L$ & $+D$ & $+\ell$ & $+d$ & $-L$ & $-D$ & $-\ell$ & $-d$ \\
		\midrule
	 evaluation for bracket polynomial $\altil|_K$ 
		& $-A^{-3}$ & $A$ & $-A^3$ & $A^{-1}$ & $-A^3$ & $A^{-1}$ & $-A^{-3}$ & $A$ \\
		\bottomrule
	\end{tabularx}
	\caption{Bracket polynomial evaluations for signed activity letters.}
	\label{tab:braeva}
\end{table}

\begin{theorem}\cite[Thistlethwaite]{thistlethwaite1987spanning}
Let $\Ltil_l$ be a link diagram with an ordered labeling $l$, and let $\widetilde G_{\Ltil}$ be its associated signed checkerboard graph. Then the Kauffman bracket polynomial admits a spanning tree expansion using the evaluations listed in Table~\ref{tab:braeva}:
	\begin{equation}
		\Gamma_{\widetilde G_{\Ltil}}(A)=\sum_{T \subset \widetilde G_{\Ltil} }\prod_{e \in \widetilde G_{\Ltil}} \altil_T(e)\big|_K,
		\label{eq:bracketpolyspaneva}
	\end{equation}
where the sum is taken over all spanning trees $T$ of $\widetilde G_{\Ltil}$, and $\altil_T(e)\big|_K$ denotes the evaluation of the signed activity letter according to Table~\ref{tab:braeva}.
\end{theorem}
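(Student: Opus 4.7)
The plan is to prove the spanning tree expansion by induction on $|E(\widetilde G_{\Ltil})|$, pivoting on the edge $e_n$ carrying the highest label. The base case $|E(\widetilde G_{\Ltil})|=0$ is immediate: $L$ reduces to an unknotted circle, and both sides evaluate to $1$.

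First I would translate the smoothing relation into a graph-theoretic recursion. Applying the smoothing relation at the crossing $c_n$ corresponding to $e_n$, the two resulting smoothings either merge or separate the two black regions incident to $c_n$; under the checkerboard identification these produce the link universes associated to $\widetilde G_{\Ltil}/e_n$ and $\widetilde G_{\Ltil}-e_n$, respectively. The sign $\opsign(e_n)$ dictates which smoothing contributes the factor $A$ and which contributes $A^{-1}$. When $e_n$ is neither a loop nor an isthmus, one obtains
\[\left\langle \Ltil \right\rangle = \alpha \left\langle \Ltil/e_n \right\rangle + \beta \left\langle \Ltil - e_n \right\rangle\]
with $(\alpha,\beta)=(A,A^{-1})$ if $\opsign(e_n)=+$ and $(\alpha,\beta)=(A^{-1},A)$ if $\opsign(e_n)=-$. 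When $e_n$ is a loop or isthmus, one of the two terms on the right acquires an additional disjoint unknotted component, contributing a factor $(-A^2-A^{-2})$ via the stabilization rule.

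Next, I would decompose the right-hand side analogously. Partitioning spanning trees $T\subset \widetilde G_{\Ltil}$ according to whether $e_n\in T$ puts the first class in bijection with spanning trees of $\widetilde G_{\Ltil}/e_n$ and the second with those of $\widetilde G_{\Ltil}-e_n$. The crucial technical observation is that because $e_n$ carries the \emph{highest} label, the activity of every other edge $e\ne e_n$ is preserved upon passing from $(\widetilde G_{\Ltil},T)$ to the corresponding pair in the smaller graph: even when $e_n$ lies in the fundamental cycle or cocycle of $e$, removing $e_n$ cannot change whether $e$ is the lowest element there, since $e_n$ is never the lowest. Consequently, in the generic case $e_n$ is forced to be inactive for every $T$, with signed activity letter $\opsign(e_n)D$ or $\opsign(e_n)d$ depending on whether $e_n\in T$ or not, and the spanning tree sum satisfies
\[\Gamma_{\widetilde G_{\Ltil}}(A) = \opsign(e_n)D\big|_K\cdot \Gamma_{\widetilde G_{\Ltil}/e_n}(A)+\opsign(e_n)d\big|_K\cdot \Gamma_{\widetilde G_{\Ltil}-e_n}(A).\]
The Table~\ref{tab:braeva} evaluations $+D\mapsto A$, $+d\mapsto A^{-1}$, $-D\mapsto A^{-1}$, $-d\mapsto A$ exactly match the coefficients $\alpha,\beta$ from the bracket recursion, closing the induction in the generic case via the inductive hypothesis.

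The main obstacle will be the loop and isthmus sub-cases. There $e_n$ is automatically active—externally active if a loop, internally active if an isthmus—so its signed activity letter becomes $\opsign(e_n)L$ or $\opsign(e_n)\ell$ and evaluates to $-A^{\pm 3}$. Meanwhile the bracket side produces an extra unknotted component contributing $(-A^2-A^{-2})$. Reconciling the two sides reduces to algebraic identities such as $A+A^{-1}(-A^2-A^{-2})=-A^{-3}$ in each of the sub-cases (loop or isthmus combined with $\opsign(e_n)=\pm$). A secondary subtlety is ensuring the inductive hypothesis applies to the smaller graphs even when the corresponding link universes contain curls or nugatory components introduced by the smoothing; this is resolved by regarding $\Gamma$ as a purely graph-theoretic invariant and checking its compatibility with Reidemeister-I-type reductions separately.
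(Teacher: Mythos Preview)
The paper does not give its own proof of this statement: it is cited directly from Thistlethwaite \cite{thistlethwaite1987spanning} and stated without argument, followed immediately by a remark. There is therefore nothing in the paper against which to compare your proposal.

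That said, your proposed induction on the highest-labeled edge is essentially Thistlethwaite's original argument and is correct. The key points---that contracting and deleting $e_n$ correspond to the two smoothings at $c_n$, that the activities of all lower-labeled edges survive these operations because $e_n$ can never be the minimum of any fundamental cycle or cocycle, and that the loop/isthmus cases are handled by the identities $A+A^{-1}(-A^2-A^{-2})=-A^{-3}$ and its variants---are all sound. Your remark about curls and nugatory crossings appearing during the induction is also apt: the cleanest fix is exactly what you indicate, namely to treat $\Gamma_{\widetilde G}$ as a graph invariant defined by the spanning-tree sum for \emph{any} signed plane graph, verify that it satisfies the same deletion--contraction and loop/isthmus relations as the bracket, and conclude by uniqueness of the solution to that recursion.
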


\begin{remark}
Note that although the activity letter of each edge depends on a specific choice of ordered labeling, the link invariant itself is independent of such a choice. This fact is crucial for our further discussion.
\end{remark}

\section{DOUBLE SPANNING TREES AND PERFECT MATCHINGS}

In this section, we introduce the \textit{EI-property}, which governs the compatibility between the spanning tree model and the Kauffman state model. We adopt the \textit{perfect matching formulation} of the Kauffman state model, which facilitates visualization of the correspondence between the two models (see Figure~\ref{fig:PTbijection}). We then turn to the \textit{Clock Theorem}, and use it to define the \textit{states lattice polynomial}, which will play a key role in the proof of the second main theorem.

\subsection{The evaluation identity property}

This subsection is primarily inspired by \cite{cohen2012determinant}. We begin by reformulating the bracket polynomial as a sum over spanning trees and their duals, which allows us to restrict our attention to internal edges relative to spanning trees.

\begin{definition}
The	\textit{double checkerboard graph} $G^d_L$ associated with a link universe $L$ is defined as the disjoint union of the checkerboard graph $G_L$ and its dual $\Gbar_L$, that is, $G^d_L \coloneqq G \sqcup \Gbar$. A \textit{double spanning tree} $\That$ of $G^d_L$ is the disjoint union of a spanning tree $T \subset G$ and its dual spanning tree $\Tbar \subset \Gbar$, that is $\That \coloneqq T \sqcup \Tbar$. A \textit{double rooted spanning tree} $\That(v, \overline v)$ is a double spanning tree $\That$ with two distinguished vertices $v^B \in v(G)$ and $v^W \in v(\Gbar)$.
\end{definition}

\begin{remark}
For simplicity of notation, we denote by $\ehat$ the edges in the double checkerboard graph $G^d_L$. In particular, symbols like $\ehat_c$ or $\ehat_i$ may refer to either $e_c$ (or $e_i$) or their duals $\ebar_c$ (or $\ebar_i$).
\end{remark}

\begin{proposition}
Let $\Ltil_l$ be a link diagram with an ordered labeling $l$, and let $\Gtil^d _{\Ltil}$ denote its associated signed double checkerboard graph. Then the Kauffman bracket polynomial of $\Ltil$ admits an expansion over double spanning trees of $\Gtil^d _{\Ltil}$. More precisely, we have
	\begin{equation}
		\Gamma_{\Gtil^d_{\Ltil}}(A)=\sum_{\That = T \sqcup \overline{T} }
		\left( \prod_{e \in T} \altil_T(e)\big|_K \right)
		\left( \prod_{\ebar \in \Tbar} \altil_{\Tbar}(\ebar)\big|_K \right),
		\label{eq:bracketpolydoublespaneva}
	\end{equation}
	summing over all double spanning trees $\That$ of $G^d_{L}$.
\end{proposition}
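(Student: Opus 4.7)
The plan is to derive the double spanning tree expansion (\ref{eq:bracketpolydoublespaneva}) directly from Thistlethwaite's expansion (\ref{eq:bracketpolyspaneva}) by repackaging the factors corresponding to external edges of a spanning tree as factors corresponding to internal edges of its dual. All the necessary combinatorics is already in Lemma~\ref{lem:cyccocycdual} and Proposition~\ref{prop:dualedgeactletter}; the only substantive check is a sign/evaluation matching against Table~\ref{tab:braeva}.

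The central observation that drives the reduction is the pointwise identity
\[
\altil_T(e)\big|_K \;=\; \altil_{\Tbar}(\ebar)\big|_K \qquad \text{for every external edge } e \notin T.
\]
To prove it, I would fix a spanning tree $T \subset \widetilde G_{\Ltil}$ and use Lemma~\ref{lem:cyccocycdual}(i) to pass to its dual $\Tbar \subset \widetilde{\Gbar}_{\Ltil}$, so that $e \notin T$ corresponds bijectively to $\ebar \in \Tbar$. Proposition~\ref{prop:dualedgeactletter} then says that $e$ is externally active (resp.\ inactive) with respect to $T$ if and only if $\ebar$ is internally active (resp.\ inactive) with respect to $\Tbar$. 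Combined with the sign convention $\opsign(\ebar)=-\opsign(e)$, this converts the four relevant external-edge letters $\{+\ell,+d,-\ell,-d\}$ to the internal-edge letters $\{-L,-D,+L,+D\}$ respectively, and a quick inspection of Table~\ref{tab:braeva} shows the pairs $(+\ell,-L)$, $(+d,-D)$, $(-\ell,+L)$, $(-d,+D)$ evaluate to the same monomials $-A^{3},\,A^{-1},\,-A^{-3},\,A$.

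With the identity in hand, I would split the product in Thistlethwaite's formula into internal and external parts and substitute:
\[
\prod_{e \in \widetilde G_{\Ltil}} \altil_T(e)\big|_K
=\Big(\prod_{e\in T}\altil_T(e)\big|_K\Big)\Big(\prod_{e\notin T}\altil_T(e)\big|_K\Big)
=\Big(\prod_{e\in T}\altil_T(e)\big|_K\Big)\Big(\prod_{\ebar\in\Tbar}\altil_{\Tbar}(\ebar)\big|_K\Big).
\]
Summing over all spanning trees $T$ of $\widetilde G_{\Ltil}$ is, by the bijection $T \mapsto \That = T \sqcup \Tbar$ of Lemma~\ref{lem:cyccocycdual}(i), the same as summing over all double spanning trees $\That$ of $\Gtil^d_{\Ltil}$, which yields (\ref{eq:bracketpolydoublespaneva}).

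There is no serious obstacle here; the statement is essentially a bookkeeping consequence of planar duality together with the built-in symmetry of Table~\ref{tab:braeva}. The only step that is not purely formal is the case check verifying that the bracket-polynomial evaluations of the four paired signed activity letters agree, and even that amounts to reading off four entries from the table.
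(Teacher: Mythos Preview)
Your proposal is correct and follows essentially the same route as the paper: both arguments hinge on the identity $\altil_T(e)\big|_K=\altil_{\Tbar}(\ebar)\big|_K$ for external edges, obtained by combining Proposition~\ref{prop:dualedgeactletter} with the sign flip $\opsign(\ebar)=-\opsign(e)$ and a check against Table~\ref{tab:braeva}, and then substitute into Thistlethwaite's expansion~\eqref{eq:bracketpolyspaneva}. Your write-up is slightly more explicit about the four-case table lookup and the bijection $T\mapsto\That$, but the logic is identical.
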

\begin{proof}
Let $e_i \in G$ be an edge with a sign, and suppose it is externally active (resp. inactive) with respect to a spanning tree $T$. Then by Proposition~\ref{prop:dualedgeactletter}, its dual edge $\ebar_i$ is internally active (resp. inactive) with respect to $\Tbar$. Moreover, the signs satisfy $\opsign(\ebar_i)$ = $-\opsign(e_i)$. Note that the corresponding signed activity letters yield the same evaluation in Table~\ref{tab:braeva}; that is, $\altil_T(e_i)|_K =\altil_{\Tbar}(\ebar_i)|_K$. Hence, the formula follows directly from Equation~\eqref{eq:bracketpolyspaneva}.
\end{proof}

The following three definitions define the perfect matching version of the Kauffman state model.

\begin{definition}
Let $L$ be a link universe, the \textit{overlaid checkerboard graph} $G^{ov}_L$ associated with $L$ is a plane bipartite graph defined as follows:\\
\textbf{--} Place a \textit{crossing vertex} $v_c$ at each crossing point $c$. \\
\textbf{--} Place a \textit{round vertex} $v_R$ in each region $R$ of $L$. \\
\textbf{--} Draw a \textit{half-edge} $h_{(c, R)}$ connecting $v_c$ and $v_R$ if and only if the crossing point $c$ is incident to the region $R$.
\end{definition}

\begin{definition}
Let $L_s$ be a link universe with a distinguished segment $s$, the two regions adjacent to $s$ has different colors; denote the corresponding round vertices by $v^{W}_s$ and $v^{B}_s$, representing the white and black regions, respectively. The \textit{balanced overlaid checkerboard graph} $G^b_{L,s}$ is obtained from the overlaid cheakerboard graph $G^{ov}_L$ by deleting $v^{W}_s$ and $v^{B}_s$, together with all the half-edges incident to these vertices.
\end{definition}

\begin{figure}[htp]
\centering
\includegraphics[width=0.7\textwidth]{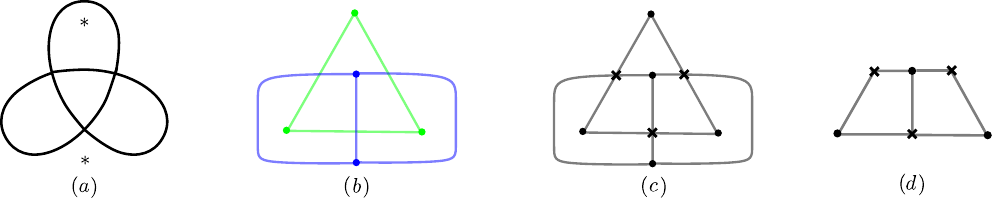}
\caption{(a) The trefoil universe with the two regions adjacent to the distinguished segment marked with stars. (b) The double checkerboard graph associated to the universe. (c) The overlaid checkerboard graph. (d) The balanced overlaid checkerboard graph.}
\label{fig:4graphs}
\end{figure}

\begin{remark}
Note that the vertices of the double checkerboard graph $G^d_L$ coincide with the round vertices of the overlaid checkerboard graph $G^{ov}_L$. Each full-edge $\ehat_c \in G^d_L$ corresponds to a pair of half-edges $h_{(c, R)} \in G^{ov}_L$ that are incident to the crossing vertex $v_c$. Additionally, each square face of $G^{ov}_L$ corresponds to a segment of $L$; see Figure~\ref{fig:4graphs} for an illustration.
\end{remark}

\begin{definition}
A \textit{perfect matching} of a graph $G$ is a subset of the edges in which each  vertex of $G$ is incident to exactly one edge.
\end{definition}

\begin{definition}
For each rooted spanning tree $T(v) \subset G$, assign a direction to each edge $e \in T$ so that every vertex in $G$ can be reached from the root $v$ by an outwardly oriented path in $T$. For each edge $e$, the vertex to which $e$ is directed is called the \textit{target} of $e$ in $T(v)$.
\end{definition}

\vspace{0.5em}
The correspondence characterized in the following proposition connects the perfect matching formulation of the Kauffman state model with the spanning tree model.

\begin{proposition}
Let $L_s$ be a link universe with a distinguished segment $s$, and let $G^d_L$ and $G^b_{L,s}$ denote the associated double checkerboard graph and the balanced overlaid checkerboard graph, respectively. Then there is a bijection between the perfect matchings $\Pcal \subset G^b_{L,s}$ and the double rooted spanning trees $\That\big(v^{W}_s, v^{B}_s\big)\subset G^d_L$. More precisely, the mutually inverse bijections are given by
\[
\eta: \Pcal \to \That \big(v_s^W, v_s^B\big), \quad 
h \mapsto \ehat \quad \text{where } \ehat \text{ is the full-edge corresponding to } h,\]
\[
\theta: \That\big(v_s^W, v_s^B\big) \to \Pcal, \quad 
\ehat_c \mapsto h_{(c, R)} \quad \text{where } v_R \text{ is the target of $\ehat_c$ in $\That\big(v_s^W, v_s^B\big)$}.\]
\label{prop:mathingtreecor}
\end{proposition}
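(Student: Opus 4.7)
The plan is to verify the two explicit maps by a color-by-color edge count and a local matching-versus-rooted-tree correspondence, identifying the acyclicity of the images as the only nontrivial step. Euler's formula says a connected link universe with $n$ crossings has $n+2$ regions, so $n^B + n^W = n+2$ where $n^B, n^W$ denote the numbers of black and white regions; hence $G^b_{L,s}$ is bipartite with both parts of size $n$, each perfect matching has $n$ edges, and any double rooted spanning tree $\That(v^W_s, v^B_s) = T \sqcup \Tbar$ carries $|T| = n^B - 1$ and $|\Tbar| = n^W - 1$ edges, summing to $n$. Both objects select exactly one of $\{e_c, \ebar_c\}$ per crossing $c$, so I can split each into a black part in $G$ and a white part in $\Gbar$ and run the bijection color by color.

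I would first handle the easier direction $\theta$. Given $\That(v^W_s, v^B_s)$, orient each edge away from its root, so that each non-root vertex has exactly one in-edge. Setting $\theta(\That) = \{h_{(c, R_c)} : \ehat_c \in \That,\ v_{R_c} = \text{target of } \ehat_c\}$ produces a half-edge covering every crossing vertex $v_c$ once (via its unique $\ehat_c$), every non-root round vertex once (as its target), and leaving $v^W_s, v^B_s$ uncovered. Hence $\theta(\That)$ is a perfect matching of $G^b_{L,s}$.

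For $\eta$, orient each full-edge of $\eta(\Pcal)$ toward the round vertex it is matched to. Each non-$v^B_s$ vertex in $G$ then has in-degree exactly one (from the unique crossing matched to it), $v^B_s$ has in-degree zero, and analogously for $\Tbar$ with $v^W_s$. Combined with the edge counts above, the only remaining content is that $T$ (equivalently $\Tbar$) is acyclic. A backward in-edge trace from any non-root vertex is deterministic and either terminates at the root or closes into a directed cycle; if $T$ contained such a cycle $C$, then as a Jordan curve it would cut $\Gbar$ along $\{\ebar_c : c \in C\}$, and since $e_c \in T$ forbids $\ebar_c$ from $\Tbar$, the graph $\Tbar$ would split into at least two vertex-disjoint pieces, and the backward trace in the piece not containing $v^W_s$ would force a directed cycle in $\Tbar$ as well.

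The main obstacle is excluding this simultaneous occurrence of cycles in $T$ and $\Tbar$: while the back-pointer trace and the planar cut argument are routine, ruling out both cycles together requires the full planarity of the overlaid graph $G^{ov}_L$ together with the fact that $v^W_s$ and $v^B_s$ are incident to the two regions sharing the single distinguished segment $s$. This is precisely the content of the Temperley-type bijection between dimer coverings and pairs of spanning trees exploited in \cite{cohen2014kauffman, propp2002lattice}, which I would invoke to close the argument. Once acyclicity is established, $T$ and $\Tbar$ are spanning trees rooted at $v^B_s$ and $v^W_s$, the matching-induced orientation coincides with their unique away-from-root orientations, and the identities $\eta \circ \theta = \mathrm{id}$ and $\theta \circ \eta = \mathrm{id}$ follow from the local definitions.
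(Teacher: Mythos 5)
Your set-up, edge counts, and the verification that $\theta(\That)$ is a perfect matching all match the paper's argument, and your treatment of mutual inversion (the matching-induced orientation has in-degree one at every non-root vertex, and such an orientation of a tree is unique, hence equals the away-from-root orientation) is a clean alternative to the paper's induction on path length. The problem is the one step you yourself single out as the crux: you do not prove that a cycle in the black part and the forced cycle in the white part cannot coexist, but instead declare this to be ``precisely the content of the Temperley-type bijection'' and invoke \cite{cohen2014kauffman,propp2002lattice}. That bijection \emph{is}, in substance, the proposition being proved, so outsourcing exactly this step leaves the proof attempt incomplete (and borderline circular in this context); moreover the cited results are stated for their own boundary conventions, so you would at minimum have to check that their hypotheses match the present normalization, where the two deleted round vertices are of opposite colors and correspond to the two regions meeting along the single segment $s$.

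What is missing is that the two ingredients you correctly name (planarity plus the adjacency of $v^W_s$ and $v^B_s$ across $s$) let you \emph{iterate} your own first step rather than stop after it. In the paper: if the black part had a cycle $C_1$ with $v^W_s$ inside and $v^B_s$ not on $C_1$, then $v^B_s$ lies on the boundary cycle in $G_L$ of the face corresponding to $v^W_s$ (the two regions share the crossings at the ends of $s$), so $v^B_s$ is also enclosed by $C_1$; hence the white part on the side of $C_1$ away from $v^W_s$ has as many edges as vertices and contains a cycle $\overline C_2$, and since $v^B_s$ is likewise separated from that side, the black part on the side of $\overline C_2$ away from the roots again contains a cycle $C_3$, and so on. This produces infinitely many distinct nested cycles in a finite graph, a contradiction; your one-step implication ``cycle in $T$ forces cycle in $\Tbar$'' is exactly the first rung of this descent, and supplying the iteration (rather than a citation) is what is needed to make the proof self-contained.
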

\begin{proof}
We first show that $\theta\big(\That(v_s^W, v_s^B)\big)$ is indeed a perfect matching. Since $\That$ contains spanning subgraphs of both $G$ and its dual, every crossing vertex and every round vertex in $G^b_{L,s}$ is incident to at least one half-edge in $\theta \big(\That(v_s^W, v_s^B)\big)$. By construction, each crossing vertex is incident to exactly one such half-edge. Moreover, since both spanning trees are acyclic, each round vertex is also incident to exactly one half-edge. It follows that $\theta\big(\That(v_s^W, v_s^B)\big)$ is a perfect matching.

To see $\eta(\Pcal)$ is a double spanning tree, we first claim that it contains no cycle. Suppose for contradiction that $\eta(\Pcal)$ does contain a cycle. Then none of the deleted vertices can lie on this cycle, since this would imply the existence of a round vertex in $\Pcal$ incident to two edges, contradicting the perfect matching condition. Note that $\eta(\Pcal)$ consists of two disjoint parts: $\eta(\Pcal) \cap G_L$ and $\eta(\Pcal) \cap \Gbar_L$. Moreover, for each crossing $c$, exactly one of the edges $e_c$ and $\ebar_c$ belongs to $\eta(\Pcal)$. Without loss of generality, suppose there exists a cycle $C_1$ in $\eta(\Pcal) \cap G_L$ that does not contain $v_s^B$, and assume that $v_s^W$ lies inside $C_1$. The cycle $C_1$ separates $\eta(\Pcal) \cap \Gbar_L$ into two disconnected components. By construction, the component outside $C_1$ contains equal numbers of vertices and edges, and hence must contain a cycle, denoted by $\overline C_2$. Note that the two omitted vertices $v_s^W$ and $v_s^B$ correspond to regions adjacent to the segment $s$. In particular, $v_s^B$ lies on the cycle in $G_L$ that surrounds the face corresponding to $v_s^W$. Therefore, $v_s^B$ also lies inside the cycle $C_1$. It follows that the component of $\eta(\Pcal) \cap G_L$ outside the cycle $\overline C_2$ contains a cycle $C_3$. Repeating this procedure infinitely leads to a contradiction, as the graph is finite. This proves the claim. Finally, if one of $\eta(\Pcal) \cap G_L$ and $\eta(\Pcal) \cap \Gbar_L$ is not spanning, then the other must contain a cycle, contradicting the previous claim. Therefore, $\eta(\Pcal)$ is indeed a double spanning tree of $G_L^d$.

Now, we show that $\eta$ and $\theta$ are mutually inverse bijections. The identity $\eta \circ \theta (\That)=\That$ is immediate. For each perfect matching $\Pcal \subset G^b_{L,s}$, we have already shown that $\eta (\Pcal)$ is a double spanning tree. To see that $\theta \circ \eta (\Pcal)=\Pcal$, we need to verify that for each half-edge $h_{(c, R)}$, the vertex $v_R$ is the target of $\eta(h)$ in $\eta (\Pcal)$. Without loss of generality, assume $R$ is a black reigion. We proceed by induction on the length of the outwardly oriented path from the root $v^B_s$ in which $\eta(h)$ is the final edge. If the path length is 1, that is, $v^B_s$ is an endpoint of the edge $\eta(h)$, the claim is clear. Now, assume the claim holds for all paths of length less than $m$. Consider a path of length $m$ in which $\eta\big(h_{(c,R)}\big)$ is the final edge, if $v_R$ is not the target of $\eta(h)$, it must be a target of another edge, which would be the final edge in a path of length $m-1$. By the inductive hypothesis, this would imply that $v_R$ is incident to two edges in $\Pcal$, contradicting the perfect matching condition. Hence, $v_R$ is indeed the target of $\eta(h)$, as required. This completes the proof that $\theta \circ \eta(\Pcal)=\Pcal$.
\end{proof}

\vspace{0.5em}
After establishing the correspondence between the two models, we now define the \textit{EI-property}, which ensures that the activity letters assigned in the spanning tree model can be transferred to the perfect matchings.

\begin{definition}
	A \textbf{link configuration} is a link universe $L_{s;l}$ equipped with a distinguished segment $s$ and an ordered labeling $l$. Let $G^b_{L, s}$ denote the associated balanced overlaid checkerboard graph. 
	
	For a fixed half-edge $h \in G^b_{L, s}$, let $\mathcal{PM}(h)$ denote the set of perfect matchings of $G^b_{L, s}$ that contain $h$. Recall that $\eta(\Pcal)$ is the double spanning tree associated with the perfect matching $\Pcal$, and $\alpha_{\eta(\Pcal)}\big(\eta(h)\big)$ is the activity letter assigned to the full-edge $\eta(h)$ with respect to the double spanning tree $\eta(\mathcal{P})$.
	
	We say that $h$ satisfies the \textbf{evaluation identity property} (abbreviated as \textbf{EI-property}) if 
	\[\alpha_{\eta (\Pcal)}\big(\eta (h)\big) =\alpha_{\eta (\Pcal')}\big(\eta (h)\big) \quad \text{for all } \mathcal{P}, \mathcal{P}' \in \mathcal{PM}(h).\]
	
	 We say that a link configuration $L_{s; l}$ satisfies the $\textit {EI-property}$ if every half-edge in $G^b_{L, s}$ satisfies the $\textit {EI-property}$.
\end{definition}

We present an example and a counterexample here to illustrate the \textit{EI-property}.

\begin{example}
 Let $L_s$ be the trefoil universe with a distinguished segment $s$. Let $G^b_{L,s}$ and $G^d_{L}$ denote its balanced overlaid checkerboard graph and double checkerboard graph, respectively. The bijection of the perfect matchings of $G^b_{L,s}$ and the double rooted spanning trees of $G^d_{L}$ is depicted in subfigure (a) of Figure~\ref{fig:PTbijection}. Subfigure (b) shows two different ordered labelings $l$ and $l'$. 
 
 The link configuration $L_{s;l}$ satisfies the \textit{EI-property}, while $L_{s;l'}$ does not. As illustrated in the figure, in the configuration $L_{s;l'}$, the activity letters assigned to the full-edge $\eta (h)$ satisfy:
 \[\alpha_{\eta (\Pcal_1)}\big(\eta (h)\big) =D \quad \text{ and } \quad \alpha_{\eta (\Pcal_2)}\big(\eta (h)\big)=L,\]
 which shows that the half-edge $h$ does not satisfy the \textit{EI-property}.
\end{example}

\begin{figure}[hbp]
	\centering
	\subfigure[]{
		\includegraphics[width=0.45\textwidth, height=0.15\textheight]{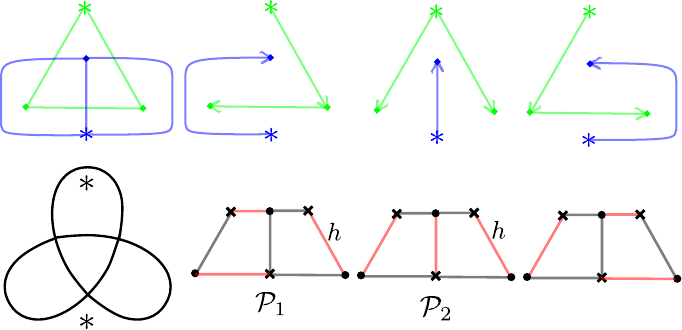}}
	\hspace{2mm}
	\subfigure[]{
		\includegraphics[width=0.45\textwidth, height=0.15\textheight]{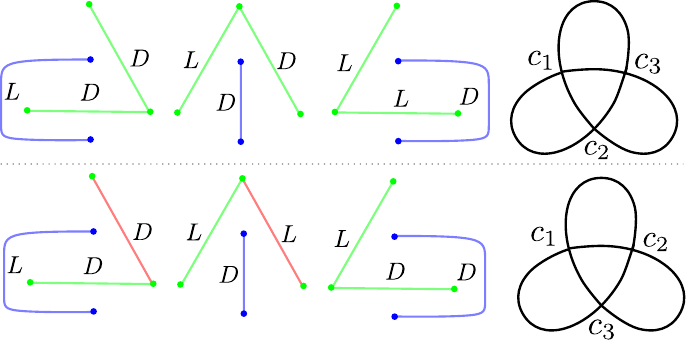}}
	\caption{(a) The bijection between the perfect matchings (shown in red) of $G^b_{L,s}$ and the double rooted spanning trees of $G^d_{L}$ with the two roots indicated by stars. (b) Two different ordered labelings $l$ (top) and $l'$ (bottom).}
	\label{fig:PTbijection}
\end{figure}

\begin{definition}
	If a link configuration $L_{s; l}$ has the $\textit {EI-property}$, we define the activity letter $\alpha (h)$ for each half-edge $h$ in $G^b_{L, s}$ by 
	\[\alpha (h) \coloneqq \alpha_{\eta (\Pcal)}\big(\eta (h)\big), \]
	where $\Pcal \in \mathcal{PM}(h)$ is any perfect matching containing $h$.
\end{definition}

\begin{definition}
Assign each region a sign by setting $\opsign(R)=+$ if $R$ is a black region and $\opsign(R)=-$ if R is a white region. The sign of a half-edge $h_{(c,R)}$ is defined as 
\[\opsign\big(h_{(c,R)}\big) \coloneqq \opsign(c)\opsign(R).\] 
If $L_{s; l}$ has the $\textit {EI-property}$, the \textit{signed activity letter} of a half-edge $h_{(c,R)}$ is defined as $\altil (h) \coloneqq \opsign(h) \alpha (h)$.
\end{definition}

\begin{proposition}
	Let $\Ltil$ be a link diagram, and let $L$ be its corresponding link universe. If there exists a link configuration $L_{s; l}$ satisfying the $\textit {EI-property}$, then the Kauffman bracket polynomial of $\Ltil$ admits a perfect matching expansion  
	\begin{equation}
	\Gamma_{G^b_{L, s}}(A)=\sum_{\Pcal}\prod_{h \in \Pcal} \altil (h)\big|_K,
	\end{equation}
	where the sum is taken over all perfect matchings of the balanced overlaid checkerboard graph $G^b_{L, s}$.
\end{proposition}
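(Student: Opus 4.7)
The plan is to derive the perfect-matching expansion directly from the double spanning tree expansion in Equation~\eqref{eq:bracketpolydoublespaneva} by transporting each summand through the bijection $\theta$ of Proposition~\ref{prop:mathingtreecor}. Since the distinguished segment $s$ fixes the roots $v_s^W$ and $v_s^B$, summing over double spanning trees $\That$ of $\Gtil^d_{\Ltil}$ is equivalent to summing over double rooted spanning trees $\That(v_s^W, v_s^B)$, and each such $\That$ corresponds to a perfect matching $\Pcal = \theta(\That)$ of $G^b_{L,s}$. Under $\theta$, each edge $\ehat_c \in \That$ is sent to the half-edge $h_{(c,R)}$ whose round vertex $v_R$ is the target of $\ehat_c$, giving an edge-by-edge bijection between the factors appearing in the two products. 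The proof then reduces to checking that the signed activity letter agrees on each pair $(\ehat,\, h=\theta(\ehat))$.

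Next, I would verify the sign matching in two cases. If the target $v_R$ lies in $G_L$, then necessarily $\ehat_c = e_c \in T$, so $\opsign(\ehat_c)=\opsign(c)$, while $R$ is black and $\opsign\!\big(h_{(c,R)}\big) = \opsign(c)\cdot(+)=\opsign(c)$. If $v_R$ lies in $\Gbar_L$, then necessarily $\ehat_c = \ebar_c \in \Tbar$, so $\opsign(\ehat_c) = -\opsign(c)$, while $R$ is white and $\opsign\!\big(h_{(c,R)}\big) = \opsign(c)\cdot(-) = -\opsign(c)$. In both cases $\opsign(\ehat) = \opsign(h)$. For the unsigned letter, the EI-property provides exactly what is needed: since $\Pcal \in \mathcal{PM}(h)$ with $\eta(\Pcal) = \That$, the common value $\alpha(h)$ coincides with $\alpha_{\eta(\Pcal)}(\eta(h)) = \alpha_{\That}(\ehat)$, which is automatically an internal letter ($L$ or $D$) because $\ehat$ lies in its respective spanning tree. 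Together these checks yield $\altil_{\That}(\ehat)\big|_K = \altil(h)\big|_K$.

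Combining the two observations, the inner product in Equation~\eqref{eq:bracketpolydoublespaneva} equals $\prod_{h \in \Pcal}\altil(h)\big|_K$, and change of summation through $\theta$ gives
\[
\Gamma_{G^b_{L,s}}(A) \;=\; \sum_{\That}\prod_{\ehat \in \That}\altil_{\That}(\ehat)\big|_K \;=\; \sum_{\Pcal}\prod_{h \in \Pcal}\altil(h)\big|_K,
\]
as claimed. The only step that is not purely formal is the legitimacy of replacing the tree-dependent letter $\alpha_{\eta(\Pcal)}(\eta(h))$ by a letter $\alpha(h)$ depending only on $h$; this is precisely the content of the EI-property, and once it is invoked the remainder of the argument is routine bookkeeping through the bijection and the sign conventions.
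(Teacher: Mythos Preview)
Your proposal is correct and follows essentially the same approach as the paper: transport the double spanning tree expansion of Equation~\eqref{eq:bracketpolydoublespaneva} through the bijection of Proposition~\ref{prop:mathingtreecor}, check that signs agree (which the paper simply asserts as ``clear'' while you spell out the two cases), and invoke the EI-property to identify the activity letters. The only difference is that you provide more detail on the sign verification, which is a welcome expansion rather than a departure.
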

\begin{proof}
 Fix a perfect matching $\Pcal \subset G^b_{L, s}$, and let $h$ be a half-edge in $\Pcal$. It is clear that $\opsign(h)=\opsign\big(\eta(h)\big)$. Moreover, since the link configuration $L_{s; l}$ satisfies the $\textit {EI-property}$, we have $\alpha (h)=\alpha_{\eta(\Pcal)} (\eta(h))$. It then follows that $\altil (h)=\altil_{\eta(\Pcal)} (\eta(h))$. By the bijection established in Proposition~\ref{prop:mathingtreecor}, the formula follows immediately from Equation~\eqref{eq:bracketpolydoublespaneva}. 
\end{proof}

\begin{proposition}\label{prop:HopfEI}
Let $H_{s;l_H}$ denote the Hopf link universe $H$, equipped with a distinguished segment $s$ and an arbitrary ordered labeling $l_H$ of its crossing points. Then $H_{s; l_H}$ always satisfies the $\textit {EI-property}$.
\end{proposition}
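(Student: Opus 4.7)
The plan is to show that the balanced overlaid checkerboard graph $G^b_{H,s}$ of the Hopf link has such a simple structure that each half-edge lies in a \emph{unique} perfect matching, which makes the \textit{EI-property} hold vacuously, regardless of the ordered labeling $l_H$.

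First, I would lay out the combinatorics of the Hopf link universe $H$: it has two crossings $c_1, c_2$, four segments (two on each circular component), and four planar regions—the unbounded region and the inner ``lens'' (both white in the checkerboard coloring), together with two ``crescents'' (both black). Each crossing is incident to all four regions.

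Next, for an arbitrary distinguished segment $s$, I would determine $G^b_{H,s}$ explicitly. The two regions adjacent to $s$ consist of one black and one white region; after deleting the corresponding round vertices $v^W_s, v^B_s$ and their incident half-edges from $G^{ov}_H$, only four vertices remain: the two crossing vertices $v_{c_1}, v_{c_2}$ and two round vertices $v_{R^W}, v_{R^B}$ (one of each color). Since each crossing is still incident to both $R^W$ and $R^B$, exactly four half-edges survive, connecting each crossing vertex to each remaining round vertex. Hence $G^b_{H,s}$ is the complete bipartite graph $K_{2,2}$, independently of the choice of $s$ (by the symmetry of the Hopf link, all four choices of $s$ yield the same picture up to relabeling).

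Then, I would enumerate the perfect matchings of $K_{2,2}$. There are exactly two, namely
\[\Pcal_1 = \big\{h_{(c_1, R^W)},\, h_{(c_2, R^B)}\big\} \quad \text{and} \quad \Pcal_2 = \big\{h_{(c_1, R^B)},\, h_{(c_2, R^W)}\big\},\]
and together they partition the set of four half-edges. In particular, every half-edge $h \in G^b_{H,s}$ belongs to exactly one perfect matching, so $\mathcal{PM}(h)$ is a singleton. Consequently, the \textit{EI-property} condition $\alpha_{\eta(\Pcal)}\big(\eta(h)\big) = \alpha_{\eta(\Pcal')}\big(\eta(h)\big)$ for all $\Pcal, \Pcal' \in \mathcal{PM}(h)$ is vacuously satisfied, no matter what ordered labeling $l_H$ is chosen. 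The only step that requires care is the explicit identification of $G^b_{H,s}$ as $K_{2,2}$ for each of the (symmetrically equivalent) choices of segment $s$; beyond this short case analysis, no activity-letter computation is needed at all.
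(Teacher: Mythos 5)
Your proposal is correct and follows essentially the same route as the paper, whose proof is just the observation that in $G^b_{H,s}$ every half-edge lies in exactly one perfect matching, so the \textit{EI-property} holds vacuously. You simply make this explicit by identifying $G^b_{H,s}$ with $K_{2,2}$ and enumerating its two perfect matchings, which is a correct and welcome elaboration of the same idea.
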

\begin{proof}
Note that for any segment $s$ in the Hopf link universe, in the corresponding balanced overlaid checkerboard graph $G^b_{L, s}$, each half-edge $h$ appears only in exactly one perfect matching of $G^b_{L, s}$.
\end{proof}

\subsection{The lattice of perfect matchings}

This subsection is primarily inspired by \cite{meszaros2024dimer}. We begin by recalling the definitions of the \textit{Kauffman states} and the \textit{Clock Theorem}, and demonstrate that perfect matchings and Kauffman states reflect the same combinatorial structure.

\begin{definition}
Let $L_s$ be a link universe with a distinguished segment $s$. A \textit{marker} is a pair $(c,R)$, where $c$ is a crossing point and $R$ is a region incident to $c$. A \textit{Kauffman state} relative to $s$ is a set of markers such that each crossing point of $L$, and each region of $L$ except for the two adjacent to $s$, appears in exactly one marker.
\end{definition}

\begin{definition}
A state $\Scal'$ is said to be obtained from $\Scal$ by a \textit{counterclockwise transposition} at a segment $s_i$ if $\Scal'$ is obtained by switching the corner markers adjacent to $s_i$ in a counterclockwise manner, as illustrated in subfigure (a) of Figure \ref{fig:upmove}.
\end{definition}

\begin{figure}[hbp]
	\centering
	\subfigure[]{
		\includegraphics[width=0.3\textwidth, height=0.07\textheight]{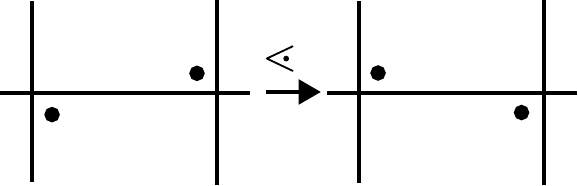}}
	\hspace{10mm}
	\subfigure[]{
		\includegraphics[width=0.3\textwidth, height=0.07\textheight]{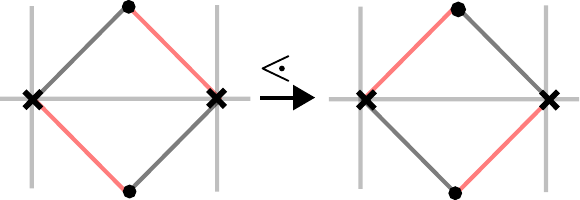}}
	\caption{(a) The counterclockwise transposition. (b) The up move.}
	\label{fig:upmove}
\end{figure}

\begin{theorem}\cite[Clock Theorem]{kauffman1983formal}\label{thm:clockthm}
Define a partial order on the set of Kauffman states by $\Scal_1 \leq \Scal_2$ if $\Scal_2$ can be obtained from $\Scal_1$ by a sequence of counterclockwise transpositions. Then the set of Kauffman states, equipped with this partial order, forms a bounded lattice. Denote the minimal and maximal states in this lattice by $\Scal_{\min}$ and $\Scal_{\max}$, respectively.
\end{theorem}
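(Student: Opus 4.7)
The plan is to translate the claim into the language of perfect matchings of the balanced overlaid checkerboard graph $G^b_{L,s}$ and to invoke the classical distributive lattice structure on the set of perfect matchings of a plane bipartite graph. The natural bijection between Kauffman states and perfect matchings of $G^b_{L,s}$---sending a marker $(c,R)$ in a state $\Scal$ to the half-edge $h_{(c,R)}$---transports the partial order in the statement to a partial order on perfect matchings, and it suffices to prove the lattice property there.

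First I would verify that $G^b_{L,s}$ is a plane bipartite graph, with bipartition given by crossing vertices versus round vertices. Each segment $s_i$ of $L$ other than $s$ corresponds to a square face $f_{s_i}$ of $G^{ov}_L$ whose four boundary half-edges are precisely the four possible corner markers at $s_i$. Because a perfect matching covers the two crossing vertices at the endpoints of $s_i$ exactly once, it must use exactly two of the four half-edges on $\partial f_{s_i}$, and these two edges are opposite on the square. Consequently, a counterclockwise transposition at $s_i$ is exactly a face rotation of the matching along $f_{s_i}$: the pair of matching edges bounding $f_{s_i}$ is swapped with the complementary pair, and nothing else changes.

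Next I would invoke the height function on perfect matchings of a plane bipartite graph (see \cite{propp2002lattice}). Fix a base face $f_0$ and define $h_{\Pcal}(f) \in \mathbb{Z}$ by summing signed contributions $\pm 1$ along any dual path from $f_0$ to $f$ across the edges of $G^b_{L,s}$, where the sign is determined by whether the crossed edge lies in $\Pcal$ and by the side of the bipartition from which one crosses. Bipartiteness together with the fact that every internal face has even boundary length makes $h_{\Pcal}$ well-defined, and a face rotation at $f_{s_i}$ changes $h_{\Pcal}$ only at $f_{s_i}$, by a fixed nonzero amount. After checking that the \emph{counterclockwise} convention in the definition of the transposition corresponds to a uniform \emph{increase} of this local height, the relation $\Scal_1 \leq \Scal_2$ coincides with componentwise comparison of the corresponding height functions.

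With this identification, reflexivity and transitivity are automatic, antisymmetry follows from strict monotonicity of the height along any nontrivial sequence of transpositions, and the lattice property plus boundedness reduce to the classical fact that the pointwise minimum and pointwise maximum of two height functions of perfect matchings are again height functions of perfect matchings; the states $\Scal_{\min}$ and $\Scal_{\max}$ are then the images under the bijection of the matchings realizing the global minimum and maximum. The delicate step I expect to be the main obstacle is the orientation audit in the previous paragraph: one must verify, at each of the finitely many local configurations of markers around a segment, that the combinatorial ``counterclockwise'' rule used to define the transposition agrees with the planar orientation chosen to set up the height function, so that every counterclockwise transposition strictly raises $h_{\Pcal}$ rather than lowering it.
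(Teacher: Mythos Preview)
The paper does not supply its own proof of this statement; it is quoted as the Clock Theorem from \cite{kauffman1983formal} and used as a black box. Your route---transporting the problem to perfect matchings of $G^b_{L,s}$ and invoking the height-function distributive-lattice machinery of \cite{propp2002lattice}---is one of the standard modern reproofs, and the paper itself remarks in the introduction that the Clock Theorem has been reinterpreted in exactly this perfect-matching framework (citing \cite{cohen2014kauffman,propp2002lattice}). So your proposal is not being compared against anything in the paper; it is simply a valid independent argument for a cited result.

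One small slip worth fixing: it is not true that a perfect matching ``must use exactly two of the four half-edges on $\partial f_{s_i}$.'' The two crossing vertices on that square may well be matched by half-edges lying outside $f_{s_i}$, so the matching can meet $\partial f_{s_i}$ in $0$, $1$, or $2$ edges. A counterclockwise transposition at $s_i$ is available precisely in the case where two opposite edges of $f_{s_i}$ are present, and that is the correct hypothesis for a face rotation in the height-function picture; the remainder of your outline is sound.
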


\begin{definition}
Let $L_s$ be a link universe with a distinguished segment $s$, and let $G^b_{L,s}$ denote its associated balanced overlaid checkerboard graph. 

A segment $s_j$ is called a \textbf{transposable segment} if $s_j$ and $s$ do not bound the same region.
 
A square faces $f_j$ in $G^b_{L,s}$ is called a \textbf{movable face} if it corresponds to a transposable segment $s_j$. 

A perfect matching $\Pcal' \subset G^b_{L,s}$ is said to be obtained from a matching $\Pcal$ by an \textit{up move} at the square face $f_j$ if $\Pcal'$ is obtained by replacing the two matching edges of $f_j$ that run clockwise from a round vertex to a crossing vertex with the two complementary edges that run from a crossing vertex to a round vertex, as illustrated in subfigure (b) of Figure~\ref{fig:upmove}. 
\end{definition}

\begin{proposition}\label{prop:statematchingcor}
Let $L_s$ be a link universe with a distinguished segment $s$. Then there exists a natural bijection between the set of perfect matchings $\Pcal \subset G^b_{L,s}$ and the set of Kauffman states $\Scal$ relative to $s$. More precisely, this bijection is given by the correspondence:
\[\Pcal \leftrightarrow \Scal, \ h_{(c, R)} \leftrightarrow (c,R),\]
where each matched half-edge $h_{(c, R)}$ in $\Pcal$ corresponds to a marker $(c, R)$ in the state $\Scal$.
\end{proposition}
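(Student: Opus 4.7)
The plan is to establish the bijection by unwinding definitions and checking that the axioms defining a perfect matching of $G^b_{L,s}$ translate, vertex by vertex, into the axioms defining a Kauffman state relative to $s$. Concretely, I would introduce the forward map $\Pcal \mapsto \Scal_\Pcal \coloneqq \{(c,R) : h_{(c,R)} \in \Pcal\}$ and the candidate inverse $\Scal \mapsto \Pcal_\Scal \coloneqq \{h_{(c,R)} : (c,R) \in \Scal\}$, and verify that they are well-defined and mutually inverse.

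First I would observe that, by the very definition of the overlaid checkerboard graph $G^{ov}_L$, the half-edges $h_{(c,R)}$ are in natural one-to-one correspondence with pairs $(c,R)$ in which the crossing point $c$ is incident to the region $R$; in other words, the set of half-edges of $G^{ov}_L$ is exactly the set of \emph{potential} markers of $L_s$. Passing from $G^{ov}_L$ to $G^b_{L,s}$ amounts to deleting the round vertices $v_s^W$ and $v_s^B$ together with every half-edge incident to them, and this deletion corresponds precisely to discarding those potential markers whose region is one of the two regions adjacent to $s$. Hence half-edges of $G^b_{L,s}$ correspond bijectively to the admissible markers appearing in Kauffman states relative to $s$.

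Next I would translate the matching condition locally at each vertex. The requirement that every crossing vertex $v_c$ of $G^b_{L,s}$ is incident to exactly one edge of $\Pcal$ says exactly that each crossing point $c$ of $L$ appears in exactly one marker of $\Scal_\Pcal$. Symmetrically, the requirement that every round vertex $v_R$ with $v_R \neq v_s^W, v_s^B$ is incident to exactly one edge of $\Pcal$ says that every region $R$ other than the two regions adjacent to $s$ appears in exactly one marker of $\Scal_\Pcal$. These two conditions are precisely the defining conditions of a Kauffman state relative to $s$, so $\Scal_\Pcal$ is indeed a Kauffman state. Reading the same chain of equivalences in the opposite direction shows that, for any Kauffman state $\Scal$, the set $\Pcal_\Scal$ of half-edges corresponding to the markers of $\Scal$ is a perfect matching of $G^b_{L,s}$. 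Since $\Pcal \mapsto \Scal_\Pcal$ and $\Scal \mapsto \Pcal_\Scal$ are literally inverse operations at the level of incidence pairs, they are mutually inverse bijections, yielding the claimed correspondence $h_{(c,R)} \leftrightarrow (c,R)$.

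The main obstacle here is essentially bookkeeping rather than substance: one must carefully match up the two pieces of "boundary data"—the two deleted round vertices $v_s^W, v_s^B$ on the graph side and the two regions adjacent to $s$ that are excluded from the support of a state on the knot-theoretic side—and confirm that they encode the same information. Once this identification is in place, the proof is a direct translation of the perfect matching axioms into the Kauffman state axioms. I would also note in passing that this bijection interacts nicely with the bijection of Proposition~\ref{prop:mathingtreecor}, although that compatibility is not needed for the statement itself.
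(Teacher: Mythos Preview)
Your proof is correct and follows exactly the same approach as the paper: both arguments simply unwind the definitions, observing that the perfect matching condition at crossing vertices (respectively, round vertices) of $G^b_{L,s}$ translates directly into the Kauffman state condition that each crossing (respectively, each non-excluded region) carries exactly one marker. The paper's proof is a one-sentence summary of precisely what you have written out in detail.
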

\begin{proof}
In $G^b_{L,s}$ each perfect matching selects exactly one incident region for each crossing point, and vice versa, matching the definition of a Kauffman state relative to $s$.
\end{proof}

\begin{corollary}
Define a partial order on the set of perfect matchings by $\Pcal_1 \leq \Pcal_2$ if $\Pcal_2$ can be obtained from $\Pcal_1$ by a sequence of up moves. Then the set of perfect matchings, equipped with this partial order, forms a bounded lattice.
\end{corollary}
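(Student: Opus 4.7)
The plan is to transfer the lattice structure from Kauffman states, where it is already known to exist by the Clock Theorem (Theorem~\ref{thm:clockthm}), to perfect matchings, via the bijection established in Proposition~\ref{prop:statematchingcor}. The heart of the argument is to show that, under this bijection, the covering relation on perfect matchings (namely, the up move) corresponds exactly to the covering relation on Kauffman states (namely, the counterclockwise transposition). Once this identification is made, the bijection becomes an isomorphism of posets, and the lattice structure is inherited.

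First, I would fix a perfect matching $\Pcal \subset G^b_{L,s}$ and let $\Scal$ denote its image under the bijection of Proposition~\ref{prop:statematchingcor}. I would then focus on a single movable face $f_j$, corresponding to a transposable segment $s_j$. The face $f_j$ has four vertices in $G^b_{L,s}$: two crossing vertices (at the two endpoints of $s_j$) and two round vertices (in the two regions adjacent to $s_j$). The key observation is that any perfect matching of $G^b_{L,s}$ must match each of these four vertices, and when restricted to the neighborhood of $f_j$ in $\Pcal$, the matching selects two of the four edges of $f_j$. Tracing through the definitions of $\eta$ and $\theta$, these two matched half-edges are precisely the half-edges corresponding to the markers of $\Scal$ at the two endpoints of $s_j$.

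Next, I would directly compare the two local operations. An up move at $f_j$ replaces the two clockwise-oriented (round-to-crossing) matching edges with the two counterclockwise-oriented ones, which is exactly a $90^{\circ}$ rotation of the pair of selected half-edges around $f_j$. Under the bijection $h_{(c,R)} \leftrightarrow (c,R)$, this rotation swaps the two markers at the endpoints of $s_j$ with the two other corners adjacent to $s_j$, in the counterclockwise sense. Comparing this to subfigure~(a) of Figure~\ref{fig:upmove} (counterclockwise transposition) and subfigure~(b) (up move), we see that the two pictures describe the same local move on opposite sides of the bijection. Hence $\Pcal \to \Pcal'$ is an up move at $f_j$ if and only if $\Scal \to \Scal'$ is a counterclockwise transposition at $s_j$.

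From this local equivalence it follows that the bijection of Proposition~\ref{prop:statematchingcor} sends sequences of up moves to sequences of counterclockwise transpositions and vice versa. Therefore the partial order $\Pcal_1 \leq \Pcal_2$ defined by sequences of up moves is pulled back from the partial order $\Scal_1 \leq \Scal_2$ on Kauffman states, and the two posets are isomorphic. By the Clock Theorem the latter is a bounded lattice with minimum $\Scal_{\min}$ and maximum $\Scal_{\max}$, so the set of perfect matchings is a bounded lattice as well, with extrema $\theta(\Scal_{\min})$ and $\theta(\Scal_{\max})$. The only delicate point, and the place where I would be most careful, is the matching of orientations in the two local pictures—specifically verifying that ``clockwise round-to-crossing'' really corresponds to the pair of markers being swapped \emph{counterclockwise} around the segment, rather than the reverse; this can be checked by a direct inspection of one of the two local configurations, with the bipartite structure of $G^b_{L,s}$ (round vs. crossing vertices) fixing the orientation convention uniquely.
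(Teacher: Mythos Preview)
Your proposal is correct and follows essentially the same approach as the paper: establish that under the bijection of Proposition~\ref{prop:statematchingcor} an up move at $f_j$ corresponds to a counterclockwise transposition at $s_j$, and then invoke the Clock Theorem. The paper's own proof is a two-sentence appeal to Figure~\ref{fig:upmove} for this local correspondence, whereas you spell out the verification in more detail; one small slip is the claim that a perfect matching always ``selects two of the four edges of $f_j$''---this need not hold in general, only when an up move at $f_j$ is actually available, which is the only case you need.
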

\begin{proof}
It is evident from Figure~\ref{fig:upmove} that an up move at the square face $f_j$ in a perfect matching corresponds precisely to a counterclockwise transposition at the segment $s_j$ in the associated Kauffman state. Therefore, the result follows directly from Theorem~\ref{thm:clockthm} and Proposition~\ref{prop:statematchingcor}.
\end{proof}

For an example of the lattice bijection between of Kauffman states and perfect matchings, we refer to Figure~\ref{fig:stateslattice}.

\begin{figure}[hbp]
	\centering
	\subfigure[]{
		\includegraphics[width=0.47\textwidth, height=0.38\textheight]{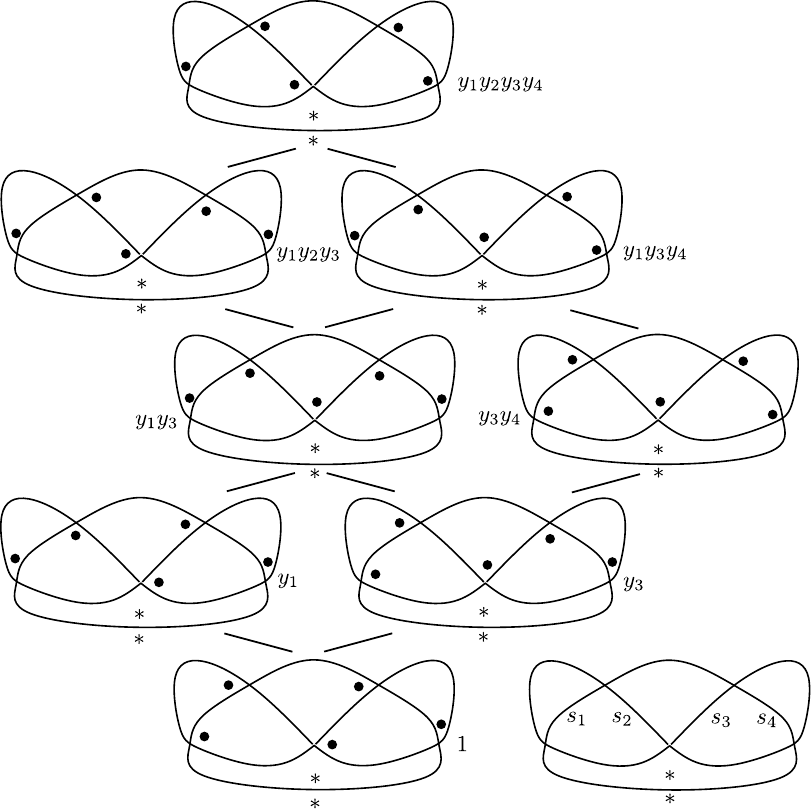}}
	\hspace{3mm}
	\subfigure[]{
		\includegraphics[width=0.47\textwidth, height=0.38\textheight]{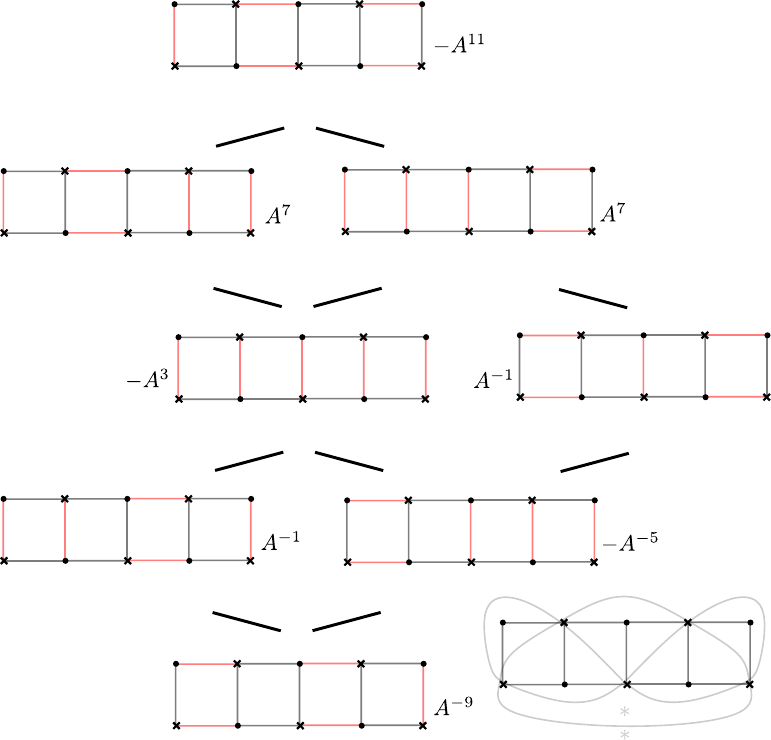}}
	\caption{(a) The lattice of Kauffman states associated with the Whitehead link universe, with each state labeled by its corresponding state monomial. (b) The corresponding lattice of perfect matchings reflecting the same combinatorial structure.}
	\label{fig:stateslattice}
\end{figure}

We use the established correspondence between perfect matchings and Kauffman states to introduce several notions needed for the proof of the second main theorem in Section~5.

\begin{definition}
Let $L_{s; l}$ be a link configuration satisfying the $\textit {EI-property}$, and let $\Pcal_{\Scal}$ denote the perfect matching associated to the Kauffman state $S$. The \textbf{weight} $\om(\Scal)$ of the state $\Scal$ is defined by 
\[\om(\Scal) \coloneqq \prod_{h \in \Pcal_{\Scal}}\altil (h)|_K.\]
The \textbf{weight ratio} of each transposable segment $s_i$ is then defined as \[\om(s_j) \coloneqq \om(\Scal')/\om(\Scal),\]
where $\Scal'$ is the state obtained from $\Scal$ by a counterclockwise transposition at $s_j$.
\end{definition}

\begin{definition}
Let $L_s$ be a link universe with a distinguished segment $s$. Let $\Scal$ be a Kauffman state, a \textit{saturated chain} $C_{\Scal}$ of $\Scal$ is a sequence of counterclockwise transpositions from the minimal state $\Scal_{\min}$ to $\Scal$: \[C_{\Scal} \coloneqq \Scal_{\min}=\Scal_0 \lessdot \Scal_1 \cdots \lessdot \Scal_l=\Scal.\]
Suppose that $\Scal_i$ is obtained from $\Scal_{i-1}$ by a counterclockwise transposition at segment $s^i$. The associated \textit{transposition words} of $C_{\Scal}$ is the words $s^1\ldots s^l$. The \textit{state monomial} of a state $\Scal$ with respect to $C_{\Scal}$ is defined as $m_C(\Scal) \coloneqq y_{s^1}\ldots y_{s^l}$.
\end{definition}

\begin{proposition}
Let $L_s$ be a link universe with a distinguished segment $s$. Let $\Scal$ be a Kauffman state, and let $C_{\Scal}$ and $C'_{\Scal}$ be any two saturated chains from $\Scal_{\min}$ to $\Scal$. Then the corresponding state monomials are equal: $m_C(\Scal)=m_{C'}(\Scal)$. Hence, the \textbf{state monomial} of 
$\Scal$ is well-defined, independent of the choice of saturated chain, and we may denote it simply by $m(\Scal)$.
\end{proposition}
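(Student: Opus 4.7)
The plan is to deduce this from the bounded-lattice structure of the Kauffman states (Theorem~\ref{thm:clockthm}) together with a local commutativity property of counterclockwise transpositions. Since all saturated chains from $\Scal_{\min}$ to $\Scal$ have the same length, the statement $m_C(\Scal)=m_{C'}(\Scal)$ is equivalent to saying that the multiset of transposition segments along such a chain depends only on $\Scal$. I would establish this by showing that any two saturated chains can be connected by a finite sequence of local swaps of two consecutive letters in the transposition word, each of which manifestly preserves the resulting monomial.

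The first step is to prove a \emph{diamond property}: if a Kauffman state $\Scal$ admits counterclockwise transpositions at two distinct segments $s_a\neq s_b$, producing states $\Scal_a,\Scal_b$ with $\Scal\lessdot\Scal_a$ and $\Scal\lessdot\Scal_b$, then there exists a state $\Scal''$ with $\Scal_a\lessdot\Scal''$ via a transposition at $s_b$ and $\Scal_b\lessdot\Scal''$ via a transposition at $s_a$. Geometrically this is a local statement: a transposition at $s_j$ only alters the markers at the two crossings at the endpoints of $s_j$ and at the two regions adjacent to $s_j$. Inspecting the local picture in Figure~\ref{fig:upmove}(a) (or equivalently the up-move picture on $G^b_{L,s}$), one checks that when both moves are simultaneously available, executing them in either order yields the same resulting state.

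With the diamond property in hand, I would run a Jordan--H\"older style induction. Given two saturated chains $C\colon\Scal_0\lessdot\cdots\lessdot\Scal_l$ and $C'\colon\Scal'_0\lessdot\cdots\lessdot\Scal'_l$ from $\Scal_{\min}$ to $\Scal$ that differ, let $i$ be such that $\Scal_i=\Scal'_i$ but $\Scal_{i+1}\neq\Scal'_{i+1}$; these two covers of $\Scal_i$ are obtained by transpositions at distinct segments $s_a,s_b$. The diamond property furnishes a state $\Scal''$ covering both; completing $\Scal''$ to $\Scal$ by any saturated chain in the bounded lattice and splicing, I obtain a new saturated chain $\widetilde C$ which coincides with $C$ up to step $i$, then traverses $\Scal_i\lessdot\Scal'_{i+1}\lessdot\Scal''$, and then continues arbitrarily to $\Scal$. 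The transposition word of $\widetilde C$ is obtained from that of $C$ by transposing the two letters at positions $i+1,i+2$, so $m_{\widetilde C}(\Scal)=m_C(\Scal)$, while $\widetilde C$ agrees with $C'$ through at least step $i+1$. Iterating finitely often transforms $C$ into $C'$ without altering the state monomial.

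The main obstacle I anticipate is the careful geometric verification of the diamond property when $s_a$ and $s_b$ share a common crossing or region: one must check that the two counterclockwise transpositions genuinely act on compatible parts of the marker configuration at the shared element, so that neither obstructs the other and both orders of execution produce the same outcome. Once this case analysis is complete, the remaining argument is a standard diamond-lemma induction.
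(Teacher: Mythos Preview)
Your approach is correct and is in spirit the same as the paper's: both proofs rest on a diamond property for covers plus an induction on chain length. The paper's version is organized slightly differently---it cites \cite{gilmer1986duality} for the fact that the Kauffman-state lattice is graded distributive (which immediately yields the diamond), and then runs the induction from the top: given two maximal chains ending at $\Scal$, it forms the lower element $\Scal''$ of the diamond beneath $\Scal$ and applies the inductive hypothesis to $\Scal_{n-1}$ and $\Scal'_{n-1}$. Your Jordan--H\"older rewriting from the first point of divergence is an equivalent packaging.

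One remark on the obstacle you flag: the case analysis for segments sharing a crossing or a region is in fact vacuous. In the perfect-matching picture, if the movable faces $f_a$ and $f_b$ share any vertex $v$ (a crossing vertex or a round vertex), then the up moves at $f_a$ and $f_b$ each require a specific half-edge incident to $v$ to lie in the matching, and a short orientation check shows these two required half-edges are distinct; since $v$ is matched to exactly one edge, both moves cannot be simultaneously available. Hence whenever two counterclockwise transpositions are both available, the segments involved share neither a crossing nor a region, and the moves act on disjoint markers and commute trivially. So your diamond property holds without any delicate local verification, and your proof goes through cleanly; the paper simply sidesteps this computation by invoking distributivity.
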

\begin{proof}
It was shown in \cite{gilmer1986duality} that the lattice of kauffman states defined above is indeed a graded distributive lattice. Therefore, the chains $C_{\Scal}$ and $C'_{\Scal}$ have the same length, and we can proceed by induction on the length of such saturated chains. The base case, when the saturated chains have length 1, is straightforward. Now, assume that the desired statement holds for all Kauffman states whose associated saturated chains have length less than $n$. Denote by $S_{n-1}$ and $S'_{n-1}$ the $(n-1)st$ Kauffman state in the saturated chains $C_{\Scal}$ and $C'_{\Scal}$, respectively. Suppose $S_{n-1}$ and $S'_{n-1}$ can be obtained from $\Scal$ by a clockwise transposition at segments $s^n$ and $s'^n$, respectively. Define a new Kauffman state $\Scal''$ as the result of applying a clockwise transposition at segment $s^n$ to $\Scal'_{n-1}$. Then, by the induction hypothesis, we have 
\[m(\Scal_{n-1})=m(\Scal'')\cdot y_{s'^n} \quad \text{ and } \quad m(\Scal'_{n-1})=m(\Scal'')\cdot y_{s^n}.\]
On the other hand, the state monomials associated with the chains $C_{\Scal}$ and $C'_{\Scal}$ are given by
\[m_{C}(\Scal)=m(\Scal_{n-1})\cdot y_{s^n} \quad \text{ and } \quad m_{C'}(\Scal)=m(\Scal'_{n-1})\cdot y_{s'^n}.\]
Substituting the previous expressions yields $m_{C}(\Scal)=m_{C'}(\Scal)$, which completes the inductive step.
\end{proof}
	
\begin{definition}
Let $L_s$ be a link universe with a distinguished segment $s$. The \textbf{states lattice polynomial} of $L_s$ is defined as \[\Mcal_{L,s}(\ybf)= \sum_{\Scal}m(\Scal),\] where the sum is taken over all Kauffman states of $L$ relative to $s$, and $m(\Scal)$ denotes the state monomial associated to the state $\Scal$.
\end{definition}

\begin{remark}
For simplicity of notation, we denote by $y_j$ the variable corresponding to the transposable segment $s_j$. Let $L_s$ be a Whitehead link universe with a distinguished segment $s$. The state monomial $m(\Scal)$ associated with each Kauffman state $\Scal$ is illustrated in subfigure (a) of Figure~\ref{fig:stateslattice}. Similar polynomials also appear in \cite{bazier2022knot} and \cite{meszaros2024dimer}, though defined with slight variations. 
\end{remark}

\section{ADMISSIBLE BIGON EXTENSION}

In this section, we introduce an operation called the \textit{admissible bigon extension} on a link universe $L$, which produces a new \textit{link configuration} satisfying the \textit{EI-property} from an existing one. We provide a criterion for identifying when a bigon extension is not admissible and use it to prove the first main theorem. We then present a method, referred to as the \textit{ASI procedure}, for constructing link configurations globally. We show that all \textit{admissible link configurations} can be completely characterized by this construction and use it to establish the inclusion of classical links.

\subsection{Criteria for admissible bigon extensions}

We begin by defining the two types of bigon extensions. Without loss of generality, we focus in this subsection primarily on bigon extensions of type A.

\begin{figure}[htp]
	\centering
	\subfigure[]{
		\includegraphics[width=0.47\textwidth, height=0.1\textheight]{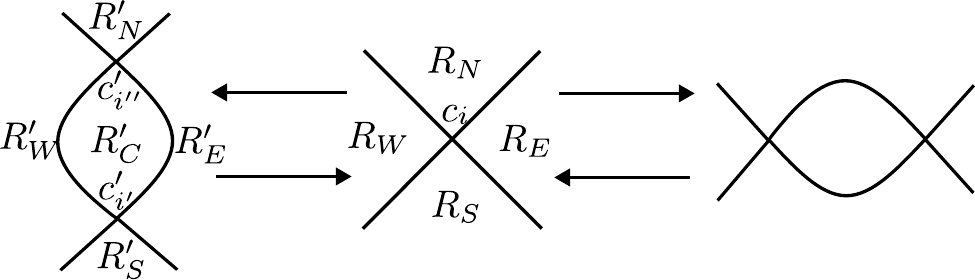}}
	\hspace{3mm}
	\subfigure[]{
		\includegraphics[width=0.47\textwidth, height=0.1\textheight]{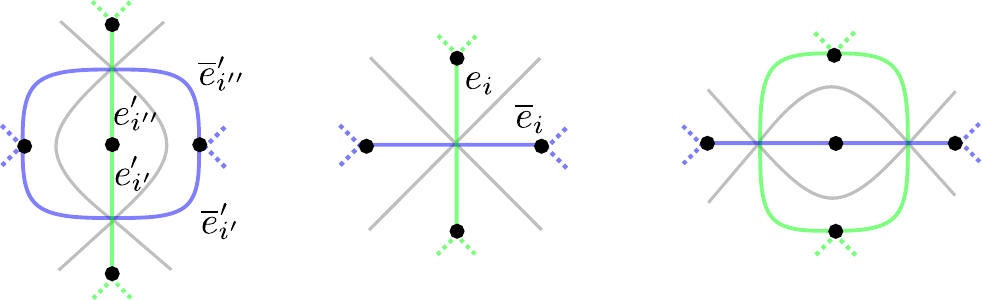}}
	\caption{(a) Two types of bigon extensions. (b) The induced local structure in the double checkerboard graph.}
	\label{fig:bigonextension}
\end{figure}
	
\begin{definition}
Let $L$ be a link universe. A \textit{bigon extension} $E_c$ at a crossing point $c$ splits $c$ into two cusps and creates a new bigon between them, as illustrated in Figure \ref{fig:bigonextension}. If the newly created region is black, the bigon extension is said to be of type A; otherwise, it is of type B. The reverse operation is called a \textit{bigon reduction}.

If a bigon extension $E_c$ is of type A, it is said to extend the edge $e_c$; otherwise, it extends the edge $\ebar_c$. 
\end{definition}

\begin{proposition}\label{prop:prime}
Let $L$ be a link universe, and let $L'$ be the link universe obtained from $L$ by a bigon extension at $c$. If $L$ is prime-like, then $L'$ is also prime-like.
\end{proposition}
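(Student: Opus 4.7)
The plan is to prove the contrapositive: assume $L'$ is not prime-like, so there exists a simple closed curve $S'$ in the plane meeting $L'$ in exactly two points $p_1, p_2$ with crossings of $L'$ on both sides of $S'$; I will produce such a curve $S$ in the plane of $L$, contradicting that $L$ is prime-like.

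The key combinatorial observation I will use throughout is that $L' \setminus \{p_1, p_2\}$ is disjoint from $S'$, so each of its connected components lies entirely on one side of $S'$. Consequently, if all vertices of $L'$ lie in a single connected component of $L' \setminus \{p_1, p_2\}$, then one side of $S'$ contains no crossing, contradicting the bad-curve hypothesis.

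\textbf{Case 1: neither $p_1$ nor $p_2$ lies on a bigon arc.} Then both bigon arcs, together with the bigon face and the two cusps $c_1', c_2'$, lie on one side of $S'$. Since $L$ and $L'$ agree outside a small neighborhood of the extended crossing, the curve $S'$ may be slightly deformed (keeping the intersection pattern with $L'$) so as to avoid that neighborhood entirely, and taking $S \coloneqq S'$ in $L$ then gives a simple closed curve with $|S \cap L|=2$. Collapsing $\{c_1', c_2'\}$ to the single crossing $c$ preserves the property of having crossings on both sides, so $S$ is a bad curve in $L$, a contradiction.

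\textbf{Case 2: at least one of $p_1, p_2$ lies on a bigon arc.} I argue that in every such sub-case, all vertices of $L'$ lie in a single connected component of $L' \setminus \{p_1, p_2\}$, contradicting the key observation. There are three sub-cases. First, if $p_1, p_2$ lie on the same bigon arc, the other bigon arc still joins $c_1'$ and $c_2'$ in $L' \setminus \{p_1, p_2\}$, and the cuts isolate only a vertex-free middle segment from the rest. Second, if $p_1$ lies on a bigon arc and $p_2$ on an external edge, the uncut bigon arc still joins $c_1'$ to $c_2'$, and the remaining external edges continue to connect the cusps to the rest of $L'$ via the edges of $L \setminus \{c\}$. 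Third, if $p_1$ and $p_2$ lie on the two different bigon arcs, both bigon arcs are cut, and the connection between $c_1'$ and $c_2'$ must be realized entirely through the external half-edges and $L \setminus \{c\}$. In all three sub-cases the argument depends on $L \setminus \{c\}$ being connected, which is guaranteed by the standing hypothesis that $L$ has no nugatory crossing at $c$ (equivalently, $c$ is not a cut vertex of the underlying graph of $L$).

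The main obstacle is the third sub-case, where the connectivity between $c_1'$ and $c_2'$ after both bigon arcs are cut rests entirely on the no-nugatory hypothesis on $L$; without it, the bigon arcs would provide the only route between $c_1'$ and $c_2'$, and cutting them would genuinely disconnect the graph, giving rise to a bad curve in $L'$ that does not descend to one in $L$.
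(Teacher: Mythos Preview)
Your proof is correct and follows essentially the same contrapositive strategy as the paper. The paper organizes the case split by whether the two new cusps $c_1',c_2'$ lie on the same or opposite sides of the separating curve, whereas you split according to where the two intersection points $p_1,p_2$ sit relative to the bigon arcs; these decompositions are equivalent (cusps on opposite sides forces one $p_i$ on each bigon arc, and cusps on the same side forces both $p_i$ off the bigon or both on one arc). In particular your sub-cases 2a and 2b are vacuous for a genuine bad curve---2b is ruled out by parity, and 2a forces one side of $S'$ to be crossing-free---so only your Case~1 and sub-case~2c actually occur, matching the paper's two cases exactly.
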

\begin{proof}
Suppose $L'$ is not prime-like. Then there exists a simple closed curve $S$ that intersects $L'$ in exactly two points and separates it into two nontrivial components. If the two new crossing points lie on different sides of $S$, then the two intersect points between $S$ and $L'$ must lie on the bigon, which implies the crossing point $c$ is nugatory. If the two new crossing points lie on the same side of $S$, then applying a bigon reduction to $L'$ yields a simple closed curve that intersects $L$ in exactly two points and separates it into nontrivial parts. This implies that $L$ is not prime-like, contradicting the assumption. 
\end{proof}

\begin{definition}
Let $L_l$ be a link universe with an ordered labeling $l$, and let $L'$ be the link universe obtained from $L$ by a bigon extension $E_{c_i}$. Then there are two natural new ordered labelings of the crossing points of $L'$, defined as follows: 
\[
l'(c') =
\begin{cases}
	j, & \text{if } c' \text{ corresponds to } c_j \text{ with } j < i; \\
	i \, \text{or } i+1, & \text{if } c'=c'_{i'} \text{ or } c'_{i''};\\
	j + 1, & \text{if } c' \text{ corresponds to } c_j \text{ with } j > i.
\end{cases}
\]
\end{definition}

\begin{remark}
Let $c_j$ with $j \neq i$ be a crossing point of $L$. For simplicity of notation, we denote by $l'(j)$ the label of the crossing point of $L'$ that corresponds to $c_j$; in other words, the crossing point $c'_{l'(j)}$ corresponds to $c_j$. We also denote by $R'$ the region of $L'$ corresponding to the region $R$ of $L$, and by $\alpha_{\That}(\ehat)$ the activity letter of the edge $\ehat$ with respect to its corresponding spanning tree in $\That$.
\end{remark}

\begin{definition}
Let $L_{s; l}$ be a link configuration satisfying the \textit{EI-property}, and let $L'$ be the link universe obtained from $L$ by a bigon extension $E_{c_i}$. We say that the bigon extension $E_{c_i}$ is \textbf{admissible} if there exists a natural ordered labeling $l'$ such that the resulting configuration $L'_{s; l'}$ also satisfies the \textit{EI-property}.
\end{definition}

\vspace{0.5em}
The following theorem characterizes the admissible bigon extensions.

\begin{theorem}\label{thm:localcri}
	Let $L_{s; l}$ be a link configuration satisfying the \textit {EI-property}, and suppose the link universe $L$ is prime-like. Then the following hold: 
	\begin{itemize}
		\item [(a)] Recall that the omitted vertices in $G^d_L$ correspond to the two regions adjacent to the segment $s$. A bigon extension $E_c$ is \textbf{not} admissible if and only if it satisfies both of the following conditions:
		\begin{itemize}
			\item [(i)] Neither of the two endpoints of the edge $\ehat_c$, which is extended by the bigon extension $E_c$, is an omitted vertex.
			\item [(ii)] The two half-edges corresponding to $\ehat_c$ are assigned the same activity letter.
		\end{itemize}
		\item [(b)] If the bigon extension $E_c$ is admissible, then the order of the crossing points is uniquely determined. More precisely, if $E_c$ is of type A, then the order is given by:
		\[i' \textless i'' \quad \text{ if } \, \alpha \big(h_{(c_{i}, R_N)}\big)=L \text{ or } \alpha \big(h_{(c_{i}, R_S)}\big)=D,\]
		\[i' \textgreater i'' \quad \text{  if } \, \alpha \big(h_{(c_{i}, R_N)}\big)=D \text{ or } \alpha \big(h_{(c_{i}, R_S)}\big)=L.\] 
	\end{itemize}
\end{theorem}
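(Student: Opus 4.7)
The plan is a local analysis of how a bigon extension $E_{c_i}$ modifies the double checkerboard graph and its spanning trees, then a translation into a condition on activity letters of the new half-edges. We focus on type A; type B follows by the symmetric swap $G \leftrightarrow \Gbar$ and $L/D \leftrightarrow \ell/d$.

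\textbf{Step 1: local substitutions and the three configurations.} A type A extension replaces $e_{c_i}$ in $G_{L'}$ by a length-two path $R_N - v_B - R_S$ through a new degree-two bigon vertex $v_B$, via two new edges $e_{c'_{i'}}, e_{c'_{i''}}$; dually, $\ebar_{c_i}$ is replaced by a pair of parallel edges $\ebar_{c'_{i'}}, \ebar_{c'_{i''}}$ in $\Gbar_{L'}$. All other edges persist. Since $v_B$ has degree two and the duals are parallel, every double spanning tree $\That' \subset G^d_{L'}$ falls into one of three local configurations: (A) both $e_{c'_{i'}}, e_{c'_{i''}} \in T'$ and neither dual lies in $\Tbar'$; (B) only $e_{c'_{i'}} \in T'$ and only $\ebar_{c'_{i''}} \in \Tbar'$; (C) only $e_{c'_{i''}} \in T'$ and only $\ebar_{c'_{i'}} \in \Tbar'$. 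Contracting the bigon sends (A) to double spanning trees $\That$ of $G^d_L$ with $e_{c_i} \in T$, and sends (B), (C) to those with $e_{c_i} \notin T$. Primeness of $L$ (preserved by Proposition~\ref{prop:prime}) ensures that $v_B$ really has degree two and that no spurious identification of white regions occurs, so the fundamental cocycles and cycles of the new edges in $G^d_{L'}$ arise from those in $G^d_L$ by the obvious label substitution $l \mapsto l'$.

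\textbf{Step 2: activity letters at the four new edges.} For each of the two natural labelings ($l'(i') < l'(i'')$ versus $l'(i') > l'(i'')$), one computes the activity letters of $e_{c'_{i'}}, e_{c'_{i''}}, \ebar_{c'_{i'}}, \ebar_{c'_{i''}}$ in each of the three configurations. The essential observations are: in (A), since the cocycles of both new internal edges inherit their minimum label from $\opcoc(e_{c_i}, T)$, each new edge carries the same activity letter as $e_{c_i}$ had in $T$; in (B), $\opcoc(e_{c'_{i'}}, T') = \{e_{c'_{i'}}, e_{c'_{i''}}\}$, so the activity letter of $e_{c'_{i'}}$ is governed purely by the order between $l'(i')$ and $l'(i'')$, while the external $e_{c'_{i''}}$ inherits its data from $\opcyc(e_{c_i}, T)$ via an analogous minimum comparison; (C) is symmetric to (B) with the roles of the two new edges swapped.

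\textbf{Step 3: EI reduction and proof of (a).} By Proposition~\ref{prop:mathingtreecor}, the EI-property reduces to constancy of the activity letter of each full-edge as $\That'$ ranges over those double rooted spanning trees whose matching contains a specified half-edge. Half-edges away from $c'_{i'}, c'_{i''}$ inherit EI automatically from $L_{s;l}$, so only the eight half-edges at the new crossings require scrutiny. If some endpoint of $e_{c_i}$ is an omitted vertex (so (i) fails), the rooting selects a unique local configuration for each relevant half-edge and EI holds automatically. Otherwise, both $h_{(c_i, R_N)}$ and $h_{(c_i, R_S)}$ are realized in matchings of $G^b_{L,s}$ and carry well-defined activity letters. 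Step 2 then shows that if these two letters agree (condition (ii)), no choice of labeling $l'$ simultaneously makes the activity letters of all new half-edges constant across configurations (A), (B), (C), so EI-property fails; conversely, if they disagree, exactly one of the two labelings $l'$ matches the required data consistently, establishing admissibility.

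\textbf{Step 4: proof of (b) and the main obstacle.} The ordering rule in (b) is forced by Step 2: matching the computed activity letter of $e_{c'_{i'}}$ in configuration (B) to the target value $\alpha(h_{(c_i, R_N)})$ pins down the inequality between $l'(i')$ and $l'(i'')$. For instance, $\alpha(h_{(c_i, R_N)}) = L$ requires $e_{c'_{i'}}$ to be the minimum of the two-element cocycle $\{e_{c'_{i'}}, e_{c'_{i''}}\}$, hence $i' < i''$; the remaining cases follow by the obvious symmetries between $L/D$ and $R_N/R_S$. The main technical burden lies in Step 2: one must verify carefully that the label-shift $l \mapsto l'$ preserves the relative order within every cocycle and cycle containing a new edge, and invoke primeness to rule out exotic cycles arising from the duplicated dual edges. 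Once this bookkeeping is complete, the reduction to conditions (i), (ii) and the explicit ordering rule in (b) follow from a straightforward tabulation.
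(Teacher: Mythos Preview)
Your overall architecture matches the paper's: decompose double spanning trees of $G^d_{L'}$ into the three local configurations, compute activity letters of the new edges (the paper packages this as Lemmas~\ref{lem:bigonexcontro} and~\ref{lem:bigonexcorr}), show that far-away half-edges inherit EI from $L_{s;l}$ (this is Proposition~\ref{prop:bigonexpropcore}, which is less automatic than you suggest), and isolate the obstruction at the two half-edges $h'_{(c'_{i'},R'_C)}$, $h'_{(c'_{i''},R'_C)}$ incident to the new bigon region.

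There is, however, one genuine gap. You invoke the prime-like hypothesis in Step~1 for structural reasons (``$v_B$ really has degree two'', ``no spurious identification of white regions''), but those facts follow from the bigon-extension construction alone. The actual role of primeness is different: it guarantees, via \cite[Theorem~4.7]{cohen2014kauffman}, that \emph{every} half-edge of $G^b_{L,s}$ lies in some perfect matching. Without this, your Step~3 assertion that ``both $h_{(c_i,R_N)}$ and $h_{(c_i,R_S)}$ are realized in matchings'' is unjustified, and the sufficiency direction collapses: you need matchings $\mathcal P_N$, $\mathcal P_S$, and at least one of $\mathcal P_W,\mathcal P_E$ to exist in order to force the incompatible constraints at $h'_{(c'_{i'},R'_C)}$ and $h'_{(c'_{i''},R'_C)}$.

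Your treatment of the case where condition~(i) fails is also imprecise. It is not that ``the rooting selects a unique local configuration''; configurations (A) and (B) can still both occur. The point is rather that if, say, $v_N$ is omitted then no matching contains $h_{(c_i,R_N)}$, so the configuration-(A) constraint on $h'_{(c'_{i'},R'_C)}$ disappears, leaving only the configuration-(B) constraint, whose value depends solely on the ordering of $i',i''$. One is then free to choose that ordering to satisfy the single remaining constraint coming from $R_S$.
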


Before proving Theorem~\ref{thm:localcri}, we need some preparations. The following proposition establishes the correspondence between the double spanning trees before and after a bigon extension.

\begin{proposition}\label{prop:doublespancorre}
Let $L_l$ be a link universe with an ordered labeling $l$, and let $L'_{l'}$ be the link universe obtained from $L_l$ by a type-A bigon extension $E_{c_i}$. Let $G_L^d$ and $G_{L'}^d$ denote the double checkerboard graphs associated with $L$ and $L'$, respectively. Then there is a natural correspondence between the double spanning trees $\That$ of $G_L^d$ and $\That'$ of $G_{L'}^d$, described as follows:\\
\textbf{Case 1:} If $e_i \in \That$, then $\That$ corresponds to exactly one double spanning tree: \[\That'_0=\{\ehat'_{l'(j)} \mid  \ehat_j \in \That,\, j \neq i\} \cup \{e'_{i'}\} \cup \{e'_{i''}\}.\]\\
\textbf{Case 2:} If $\ebar_i \in \That$, then $\That$ corresponds to two distinct double spanning trees: 
\[\That'_1=\{\ehat'_{l'(j)} \mid \ehat_j \in \That,\, j \neq i\} \cup \{e'_{i'}\} \cup \{\ebar'_{i''}\},\] 
\[\That'_2=\{\ehat'_{l'(j)} \mid \ehat_j \in \That,\, j \neq i\} \cup \{\ebar'_{i'}\} \cup \{e'_{i''}\}.\]
\end{proposition}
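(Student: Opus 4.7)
My plan is to first describe precisely how a type-A bigon extension modifies $G_L$ and $\Gbar_L$ locally, and then to carry out a short case analysis depending on whether the edge $e_i$ lies in the primal component of $\That$. The key observation is a series/parallel duality: $E_{c_i}$ introduces exactly one new black region and one new crossing (beyond splitting $c_i$ into two cusps). Hence in $G_{L'}$ the edge $e_i$ is subdivided \emph{in series}, with a new vertex $v_{\text{new}}$ (the new black region, of degree two) inserted along $e_i$ and replacing it by the two consecutive edges $e'_{i'}, e'_{i''}$. Dually, the two white regions flanking $c_i$ are unchanged but are now joined by both new crossings, so $\ebar_i$ is replaced \emph{in parallel} by $\ebar'_{i'}, \ebar'_{i''}$. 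All remaining edges persist up to the relabeling $l'$.

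\textbf{Case $e_i \in T$.} Since $v_{\text{new}}$ has degree two in $G_{L'}$, any spanning tree of $G_{L'}$ that extends $T$ must contain both $e'_{i'}$ and $e'_{i''}$ (otherwise $v_{\text{new}}$ or the far endpoint of $e_i$ would be unreachable, because removing $e_i$ already disconnects $T$ into two components). Taking $T'$ to be $T$ (relabeled) with $e_i$ replaced by this two-edge path produces a spanning tree of $G_{L'}$, since only an edge subdivision has occurred. Since $\ebar_i \notin \Tbar$, neither parallel copy is forced on the dual side, and $\Tbar$ carries over verbatim to a spanning tree $\Tbar'$ of $\Gbar_{L'}$. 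The edge-by-edge duality check $e'_k \in T' \iff \ebar'_k \notin \Tbar'$ holds automatically on the unchanged edges and is immediate for the four new ones, so $\That'_0 := T' \sqcup \Tbar'$ is the unique double spanning tree attached to $\That$.

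\textbf{Case $\ebar_i \in \Tbar$.} Now $e_i \notin T$, so the endpoints of $e_i$ are already joined in $T$ by a path avoiding $e_i$. To span $v_{\text{new}}$ at least one of $e'_{i'}, e'_{i''}$ must lie in $T'$; including both would close that path into a cycle, so exactly one does. Symmetrically, the parallel pair $\ebar'_{i'}, \ebar'_{i''}$ would form a $2$-cycle in $\Tbar'$ if both were included, while at least one is required for spanning (inherited from $\ebar_i \in \Tbar$). The duality $e'_k \in T' \iff \ebar'_k \notin \Tbar'$ then forces the primal and dual binary choices to pair up crosswise, ruling out the aligned configurations $\{e'_{i'}, \ebar'_{i'}\}$ and $\{e'_{i''}, \ebar'_{i''}\}$ and leaving exactly the two double spanning trees $\That'_1, \That'_2$.

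\textbf{Exhaustiveness and main obstacle.} For any double spanning tree $\That'$ of $G^d_{L'}$, the spanning condition at $v_{\text{new}}$ together with duality forces $\That'$ into one of the three patterns above, and collapsing the new bigon recovers a unique $\That \subset G^d_L$ in the appropriate case. I expect the main (and essentially only) obstacle to be the duality coordination in the second case — i.e.\ showing that the primal and dual binary choices must pair crosswise rather than align — which follows directly from the series/parallel local structure established at the outset; the rest of the argument is a direct connectivity/acyclicity check.
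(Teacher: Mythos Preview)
Your argument is correct and follows the same underlying idea as the paper, namely the series/parallel local structure: the type-A extension subdivides $e_i$ by a new degree-two vertex in $G_{L'}$ and doubles $\ebar_i$ into a parallel pair in $\Gbar_{L'}$, after which the case analysis on $e_i \in T$ versus $\ebar_i \in \Tbar$ is forced. The paper's own proof, however, consists of a single sentence referring the reader to Figure~\ref{fig:bigonextensionmatch} and nothing more; your write-up is therefore considerably more detailed than what appears in the text, spelling out the connectivity, acyclicity, and duality checks that the paper leaves to visual inspection.
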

\begin{proof}
 For a visual reference, see Figure~\ref{fig:bigonextensionmatch}; disregard the orientations of the edges in the illustration.
\end{proof}

\vspace{0.5em}
We need the following lemma.

\begin{lemma}\label{lem:bigonexcontro}
	Let $L_l$ be a link universe with an ordered labeling $l$. Let $e_i$ and $e_{i+1}$ be two edges in the checkerboard graph $G_L$ that share a 2-valent vertex $v_C$. Then the following hold:
	\begin{itemize}
		\item [(a)] If $T$ is a spanning tree of $G_L$ such that $e_i \in T$ and $e_{i+1} \notin T$, then $\alpha_{T}(e_i)=L$.
		\item [(b)] If $T$ is a spanning tree of $G_L$ such that $e_{i} \notin T$ and $e_{i+1} \in T$, then $\alpha_{T}(e_{i+1})=D$.
		\item [(c)] If $\Tbar$ is a spanning tree of $\Gbar_L$ such that $\ebar_{i+1} \in \Tbar$, then $\alpha_{\Tbar}(\ebar_{i+1})=D$.
	\end{itemize}
\end{lemma}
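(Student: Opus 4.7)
The plan is to exploit the rigid local structure at the 2-valent vertex $v_C$: it has only two incident edges, namely $e_i$ and $e_{i+1}$, whose labels are consecutive. Thus in each part the argument reduces to identifying a very small fundamental cocycle and checking which of two edges has the smaller label.

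For part (a), I would first observe that since $v_C$ has only the edges $e_i$ and $e_{i+1}$ in $G_L$ and since $e_{i+1} \notin T$, the vertex $v_C$ is attached to $T$ only through $e_i$. Deleting $e_i$ from $T$ therefore isolates $v_C$, and the fundamental cocycle $\opcoc(e_i, T)$ is exactly the set of edges of $G_L$ incident to $v_C$, namely $\{e_i, e_{i+1}\}$. Since $i < i+1$, the edge $e_i$ is the lowest-ordered element of this cocycle, so $\alpha_T(e_i) = L$. Part (b) follows by the symmetric argument with the roles of $e_i$ and $e_{i+1}$ interchanged: deleting $e_{i+1}$ from $T$ again isolates $v_C$, so $\opcoc(e_{i+1}, T) = \{e_i, e_{i+1}\}$; but now $e_i$ is still the minimum, so $e_{i+1}$ is internally inactive and $\alpha_T(e_{i+1}) = D$.

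For part (c), I would pass to the planar dual. The 2-valent vertex $v_C \in G_L$ corresponds to a face of $\Gbar_L$ bounded by exactly two edges, namely $\ebar_i$ and $\ebar_{i+1}$; in other words these two dual edges form a bigon and therefore share both of their endpoints in $\Gbar_L$. Consequently, when $\ebar_{i+1}$ is deleted from $\Tbar$, the two components of $\Tbar \setminus \{\ebar_{i+1}\}$ are separated precisely at those two shared endpoints, and $\ebar_i$ reconnects them; hence $\ebar_i \in \opcoc(\ebar_{i+1}, \Tbar)$. Since $\ebar_i$ carries the smaller label, $\ebar_{i+1}$ fails to be the minimum of its cocycle, yielding $\alpha_{\Tbar}(\ebar_{i+1}) = D$.

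The only nontrivial step is the planar-duality observation used in part (c), namely that a 2-valent vertex in a plane graph dualizes to a bigon face whose two bounding edges are parallel. This is standard, so beyond isolating this correspondence I do not anticipate any further obstacle; the remaining content of the lemma is just a direct comparison between the two labels $i$ and $i+1$ within a two-element cocycle.
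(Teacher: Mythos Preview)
Your proof is correct and follows essentially the same approach as the paper: for (a) and (b) you identify the fundamental cocycle as the two-element set $\{e_i,e_{i+1}\}$ via the isolation of $v_C$, and for (c) you use the planar-duality observation that $\ebar_i$ and $\ebar_{i+1}$ are parallel edges, forcing $\ebar_i\in\opcoc(\ebar_{i+1},\Tbar)$. The paper's proof of (c) additionally makes explicit that $\ebar_i\notin\Tbar$ (since parallel edges in a tree would form a cycle), which you leave implicit but is harmless.
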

\begin{proof} 
(a) By assumption, the single vertex $v_C$ is one of the two disconnected components of $T \setminus \{e_i\}$. Hence, the fundamental cocycle $\opcoc(e_i, T)$ of $e_i$ with respect to $T$ is $\{e_i, e_{i+1}\}$. Since $e_i$ is the lowest-ordered edge in $\opcoc(e_i, T)$, the activity letter satisfies $\alpha_{T}(e_i)=L$.\\
(b) Similar to (a), the fundamental cocycle $\opcoc(e_{i+1}, T)$ of $e_{i+1}$ with respect to $T$ is $\{e_i, e_{i+1}\}$. Since $e_{i+1}$ is not the lowest-ordered edge in $\opcoc(e_{i+1}, T)$, the activity letter satisfies $\alpha_{T}(e_{i+1})=D$.\\
(c) By assumption, $\ebar_i$ and $\ebar_{i+1}$ are two edges between the same two vertices. Hence, if $\ebar_{i+1} \in \Tbar$, then $\ebar_{i} \notin \Tbar$, and $\ebar_{i}$ is contained in the fundamental cocycle $\opcoc(\ebar_{i+1}, \Tbar)$, which implies $\ebar_{i+1}$ is not the lowest-ordered edge in the fundamental cocycle. This completes the proof.
\end{proof}

\vspace{0.5em}
The following two lemmas describe the changes in the activity letters of edges induced by a bigon extension.

\begin{lemma}	\label{lem:bigonexcorr}
Let $L_l$ be a link universe with an ordered labeling $l$, and let $L'_{l'}$ be the link universe obtained from $L_l$ by a type-A bigon extension $E_{c_i}$. Let $\That$ be a double spanning tree of $G_{L}^d$. The following hold:
\begin{itemize}
	\item [(a)] If $e_i \in \That$, let $\That'_0 \subset G_{L'}^d$ be the unique double spanning tree corresponding to $\That$. Then the activity letters $\alpha_{\That'_0}\big(e'_{i'}\big)$ and $\alpha_{\That'_0}\big(e'_{i''}\big)$ are independent of the order of $i'$ and $i''$. Moreover, we have
	\[\alpha_{\That}\big(e_{i}\big)=\alpha_{\That'_0}\big(e'_{i'}\big)=\alpha_{\That'_0}\big(e'_{i''}\big)\].
	\item [(b)] If $\ebar_i \in \That$, let $\That'_1$ and $\That'_2 $ be the two double spanning trees corresponding to $\That$ containing $\ebar_{i''}$ and $\ebar_{i'}$, respectively. Then the activity letters $\alpha_{\That'_1}\big(\ebar'_{i''}\big)$ and $\alpha_{\That'_2}\big(\ebar'_{i'}\big)$ are dependent on the order of $i'$ and $i''$. Moreover, suppose $i' \textless i''$, we have 
	\[\alpha_{\That}\big(\ebar_{i}\big)=\alpha_{\That'_2}\big(\ebar'_{i'}\big), \text{ and } \alpha_{\That'_1}\big(\ebar'_{i''}\big)=D.\]
	\end{itemize}
\end{lemma}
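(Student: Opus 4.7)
The plan is to analyze the change in fundamental cocycles under a type-A bigon extension, leveraging the fact that the new labeling $l'$ shifts all labels $\ne i$ past $i'$ and $i''$ in a monotone way. The backbone of the argument is that an edge $e$ in $G_L$ with $l(e) < i$ has $l'(e) < i', i''$, and an edge with $l(e) > i$ has $l'(e) > i', i''$; hence relative orderings with respect to $i', i''$ are controlled entirely by whether they were above or below $i$. The structural picture (Proposition~\ref{prop:doublespancorre}) is as follows. In Case 1, $e'_{i'}$ and $e'_{i''}$ share the newly-created $2$-valent black vertex $v_C$ in $G_{L'}$. In Case 2, $\ebar'_{i'}$ and $\ebar'_{i''}$ are a pair of parallel edges in $\Gbar_{L'}$ between the two white vertices adjacent to the new bigon.

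For part (a), both $e'_{i'}$ and $e'_{i''}$ lie in $T'_0$. Removing $e'_{i'}$ disconnects $T'_0$ into a component containing $v_C$ (linked to $v_b$ through $e'_{i''}$) and a component containing $v_a$. I would observe that $e'_{i''}$ connects two vertices on the \emph{same} side of this cut, so $e'_{i''} \notin \opcoc(e'_{i'}, T'_0)$; thus
\[
\opcoc(e'_{i'}, T'_0) = \{e'_{i'}\} \cup \{e'_{l'(e)} : e \in \opcoc(e_i, T),\ e \ne e_i\}.
\]
Because $l'$ preserves the position of every label relative to $\{i', i''\}$ in the sense above, the edge $e'_{i'}$ is lowest in this cocycle if and only if $e_i$ was lowest in $\opcoc(e_i, T)$, regardless of whether $i' < i''$ or $i'' < i'$. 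Hence $\alpha_{\That'_0}(e'_{i'}) = \alpha_{\That}(e_i)$, and the same argument with the roles of $i'$ and $i''$ swapped gives $\alpha_{\That'_0}(e'_{i''}) = \alpha_{\That}(e_i)$.

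For part (b), suppose $i' < i''$. The statement $\alpha_{\Tbar'_1}(\ebar'_{i''}) = D$ is immediate from Lemma~\ref{lem:bigonexcontro}(c) applied to the parallel pair $\ebar'_{i'}, \ebar'_{i''}$: since both connect the same pair of white vertices and $\ebar'_{i''} \in \Tbar'_1$, the parallel edge $\ebar'_{i'}$ of smaller label lies in $\opcoc(\ebar'_{i''}, \Tbar'_1)$. For $\alpha_{\Tbar'_2}(\ebar'_{i'})$, removing $\ebar'_{i'}$ from $\Tbar'_2$ produces the same partition of vertices as removing $\ebar_i$ from $\Tbar$, and the parallel edge $\ebar'_{i''}$ now does reconnect the two components (unlike in Case 1), so
\[
\opcoc(\ebar'_{i'}, \Tbar'_2) = \{\ebar'_{i'}, \ebar'_{i''}\} \cup \{\ebar'_{l'(e)} : e \in \opcoc(\ebar_i, \Tbar),\ e \ne \ebar_i\}.
\]
Since $i' < i''$ and the remaining labels shift monotonically, $\ebar'_{i'}$ is the lowest in this cocycle exactly when $\ebar_i$ was the lowest in $\opcoc(\ebar_i, \Tbar)$; therefore $\alpha_{\That'_2}(\ebar'_{i'}) = \alpha_{\That}(\ebar_i)$.

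The only technical obstacle I anticipate is being careful about the asymmetry between the two cases: in Case 1 the sibling edge $e'_{i''}$ is \emph{not} in $\opcoc(e'_{i'}, T'_0)$ because the two edges share an endpoint on the same side of the cut, whereas in Case 2 the sibling $\ebar'_{i''}$ \emph{is} in $\opcoc(\ebar'_{i'}, \Tbar'_2)$ because parallel edges span the cut. This asymmetry is precisely what makes the Case 1 outcome independent of the order of $i', i''$ while forcing order-dependence in Case 2. Apart from this, the proof is essentially a direct matching of fundamental cocycles before and after the bigon extension.
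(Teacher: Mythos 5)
Your proof is correct and follows essentially the same route as the paper: in Case 1 you identify $\opcoc(e'_{i'},T'_0)$ with the image of $\opcoc(e_i,T)$ (the paper phrases this via contracting $e'_{i''}$, you via the explicit cut, with the sibling edge excluded because both its endpoints lie on the same side), in Case 2 you add the parallel edge $\ebar'_{i''}$ to the transported cocycle and invoke the parallel-edge observation of Lemma~\ref{lem:bigonexcontro}(c) for the letter $D$, and in both cases you conclude by the monotonicity of the relabeling $l'$. This matches the paper's argument in substance and level of detail.
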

\begin{proof}
	(a) Let $T$ and $T'_0$ be the spanning tree parts of $\That$ and $\That'_0$, respectively. By the construction of $T'_0$, the tree $T$ can be obtained from $T'_0$ by contracting the edge $e'_{i''}$. Hence, the fundamental cocycle of $e'_{i'}$ with respect to $T'_0$ is
	\[\opcoc(e'_{i'},T'_0)=\{ e'_{l'(j)} \mid  e_j \in \opcoc(e_i, T),\, j \neq i\} \cup \{e'_{i'}\}.\]
	Therefore, if $ \alpha_{T}(e_{i})=L$ (respectively, $D$), this implies that $e'_{i'}$ is (respectively, is not) the lowest-ordered edge in $\opcoc(e'_{i'}, T'_0)$. Thus we obtain $\alpha_{T}(e_{i})=\alpha_{T'_0}(e'_{i'})$. Similarly, if we change the roles of $e'_{i'}$, $e'_{i''}$, we obtain $\alpha_{T}(e_{i})=\alpha_{T'_0}(e'_{i''})$. Since the above hold for any ordering of $i'$ and $i''$, the activity letters $\alpha_{T'_0}(e'_{i'})$ and $\alpha_{T'_0}(e'_{i''})$ are independent of the order of $i'$ and $i''$.\\
	(b) Let $\Tbar'_1$ and $\Tbar'_2$ be the dual spanning tree parts of $\That'_1$ and $\That'_2$, respectively. Note that $\ebar'_{i'}$ and $\ebar'_{i''}$ are two edges between the same two vertices. By assumption, $i' \textless i''$; therefore, by part (c) of Lemma~\ref{lem:bigonexcontro}, the activity letter satisfies $\alpha_{\Tbar'_1}(\ebar'_{i''})=D$.  On the other hand, by the construction of $\Tbar'_2$, the tree $\Tbar$ can be obtained from $\Tbar'_2$ by replacing $\ebar'_{i'}$ with $\ebar_i$. Hence, the fundamental cocycle of $\ebar'_{i'}$ with respect to $\Tbar'_2$ is
	\[\opcoc(\ebar'_{i'},\Tbar'_2)=\{ \ebar'_{l'(j)} \mid  \ebar_j \in \opcoc(\ebar_i, \Tbar),\, j \neq i\} \cup \{\ebar'_{i'}\} \cup \{\ebar'_{i''}\}.\]
	Therefore, if $\alpha_{\Tbar}(\ebar_{i})=L$ (respectively, $D$), this implies that $\ebar'_{i'}$ is (respectively, is not) the lowest-ordered edge in $\opcoc(\ebar'_{i'},\Tbar'_2)$. Thus we obtain $\alpha_{\Tbar}(\ebar_{i})= \alpha_{\Tbar'_2}(\ebar'_{i'})$. This also shows the dependency of the activity letters on the order of $i'$ and $i''$, and hence completes the proof.
\end{proof}

\begin{figure}[hbp]
	\centering
	\subfigure[]{
		\includegraphics[width=0.45\textwidth]{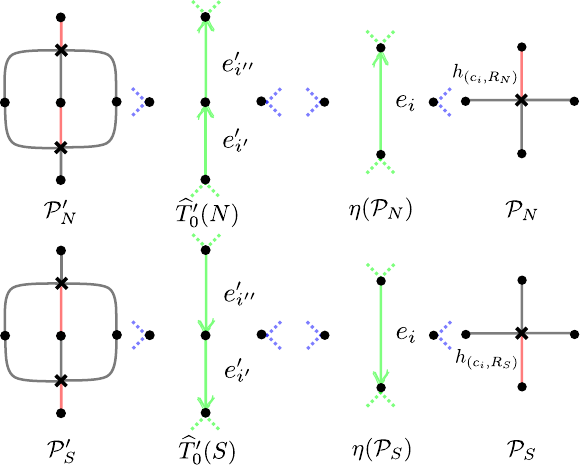}}
	\hspace{2mm}
	\subfigure[]{
		\includegraphics[width=0.45\textwidth]{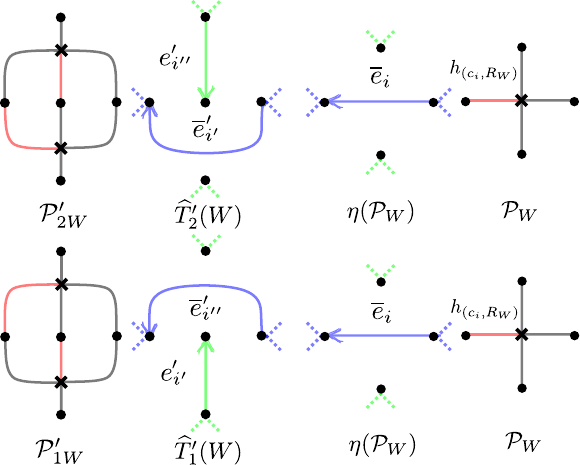}}
	\caption{(a) The correspondence between perfect matchings induced by the 1-to-1 correspondence of double rooted spanning trees. (b) The correspondence between perfect matchings induced by the 1-to-2 correspondence of double rooted spanning trees.}
	\label{fig:bigonextensionmatch}
\end{figure}

\begin{lemma}\label{lem:bigonexmain}
Let $L_l$ be a link universe with an ordered labeling $l$, and let $L'_{l'}$ be the link universe obtained from a bigon extension $E_{c_i}$. Let $\That$ be a double spanning tree in $G^d_L$, and let $\That'$ denote a corresponding double spanning tree of $\That$ after bigon extension. For any edge $\ehat_k \in \That$ with $k \neq i$, denote by $\ehat'_{l'(k)}$ its corresponding edge in $\That'$. Then the activity letters satisfy \[\alpha_{\That} \big(\ehat_k\big)=\alpha_{\That'} \big(\ehat'_{l'(k)}\big).\]
\end{lemma}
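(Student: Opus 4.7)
The plan is to exploit the correspondence of double spanning trees in Proposition~\ref{prop:doublespancorre} and compare fundamental cocycles before and after the extension. Since $\ehat_k \in \That$, the edge $\ehat_k$ is internal in its spanning tree, so its activity letter is determined solely by whether $k$ is the minimum label in its fundamental cocycle. I will describe the argument in the case $\ehat_k = e_k \in T$; the case $\ehat_k = \ebar_k \in \Tbar$ is structurally parallel, exploiting that the two parallel dual edges $\ebar'_{i'}, \ebar'_{i''}$ in $\Gbar_{L'}$ play the role dual to the series primal edges $e'_{i'}, e'_{i''}$ in $G_{L'}$.

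The key combinatorial observation is that the labeling $l'$ preserves the relative order of all labels $j \neq i$ while inserting the two new labels $l'(i'), l'(i'') \in \{i, i+1\}$ into the former slot of $i$. I would then analyze $\opcoc(e'_{l'(k)}, T')$ in the two sub-cases distinguished by Proposition~\ref{prop:doublespancorre}. When $e_i \in T$ (so $\That' = \That'_0$), both $e'_{i'}$ and $e'_{i''}$ lie in $T'$ and inside a single component of $T' \setminus \{e'_{l'(k)}\}$, so the cocycle becomes exactly $\{e'_{l'(j)} : e_j \in \opcoc(e_k, T)\}$. When $\ebar_i \in \Tbar$ (so $\That' \in \{\That'_1, \That'_2\}$), exactly one of $e'_{i'}, e'_{i''}$ lies in $T'$ and the other is external; a direct inspection shows this external edge lies in $\opcoc(e'_{l'(k)}, T')$ precisely when $e_i$ reconnected the two components of $T \setminus \{e_k\}$ originally. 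In every sub-case, the cocycle of $e'_{l'(k)}$ either inherits exactly the $l'$-images of the original non-$i$ labels, or else additionally acquires exactly one extra label in $\{i, i+1\}$ whenever the label $i$ appeared before.

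Since $k \neq i$, the comparison deciding whether $e'_{l'(k)}$ is minimal in its cocycle is insulated from the inserted labels: if $k < i$ then $l'(k) = k$ already lies below $\{i, i+1\}$, and if $k > i$ then $l'(k) = k+1$ already exceeds them. Hence $e'_{l'(k)}$ is the lowest-ordered edge in $\opcoc(e'_{l'(k)}, T')$ if and only if $e_k$ was the lowest in $\opcoc(e_k, T)$, giving $\alpha_{\That}(e_k) = \alpha_{\That'}(e'_{l'(k)})$. The main technical hurdle is the case bookkeeping---depending on whether $e_i \in T$, whether $e_i \in \opcoc(e_k, T)$, and which of $\That'_1, \That'_2$ is selected---all of which must be verified to converge on the same conclusion; once the cocycle description is pinned down, the order-comparison step is essentially automatic thanks to the insulation of $l'(k)$ from the slot $\{i, i+1\}$.
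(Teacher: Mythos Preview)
Your proposal is correct and follows essentially the same approach as the paper: a case analysis on whether $e_i \in T$ or $\ebar_i \in \Tbar$, an explicit description of $\opcoc(e'_{l'(k)}, T')$ as the $l'$-image of $\opcoc(e_k, T)$ possibly augmented by labels from $\{i, i+1\}$, and then the order-insulation argument to conclude. The only minor refinement is that in the dual case (your deferred ``structurally parallel'' case, the paper's Case~1(b)), the cocycle may acquire \emph{both} parallel dual edges $\ebar'_{i'}, \ebar'_{i''}$ rather than exactly one, but this does not affect the insulation argument or the conclusion.
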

\begin{proof}
Without loss of generality, we assume $E_{c_i}$ is of type A; the case for a bigon extension of type B is similar. We consider separately the cases depending on whether $e_i$ belongs to $\That$, as it was shown in Proposition~\ref{prop:doublespancorre}.
\begin{itemize}
	\item [\text{Case 1:}] If $e_i \in \That$, let $\That'_0$ be the unique double spanning tree corresponding to $\That$. Then the following hold:
	\begin{itemize}
		\item [(a)] for $e_k \in T$ with $k \neq i$, the fundamental cocycle of $e'_{l'(k)}$ with respect to $T'_0$ is 
		\[\opcoc(e'_{l'(k)}, T'_0)=\{ e'_{l'(j)} \mid  e_j \in \opcoc(e_k, T),\, j \neq i\}.\]
		\item [(b)] for $\ebar_k \in \Tbar$ with $k \neq i$, the fundamental cocycle of $\ebar'_{l'(k)}$ with respect to $\Tbar'_0$ is described as follows:
		\begin{itemize}
		\item [(1)] if $\ebar_i \in \opcoc(\ebar_k, \Tbar)$, then
		\[\opcoc(\ebar'_{l'(k)},\Tbar'_0)=\{ \ebar'_{l'(j)} \mid  \ebar_j \in \opcoc(\ebar_k, \Tbar),\, j \neq i\} \cup \{\ebar'_{i'}\} \cup \{\ebar'_{i'}\}.\]
		\item [(2)] if $\ebar_i \notin \opcoc(\ebar_k, \Tbar)$, then
		\[\opcoc(\ebar'_{l'(k)},\Tbar'_0)=\{ \ebar'_{l'(j)} \mid  \ebar_j \in \opcoc(\ebar_k, \Tbar),\, j \neq i\}.\]
		\end{itemize}
	\end{itemize}
\item [\text{Case 2:}] If $\ebar_i \in \That$, let $\That'_1$ be the double spanning tree corresponding to $\That$ which contains $\ebar'_{i''}$.( the case for $\That'_2$ is similar.) Then the following hold:
	\begin{itemize}
		\item [(a)] for $e_k \in T$ with $k \neq i$, the fundamental cocycle of $e'_{l'(k)}$ with respect to $T'_1$ is described as follows:
		\begin{itemize}
		\item [(1)] if $e_i \in \opcoc(e_k, T_1)$, then
		\[\opcoc(e'_{l'(k)}, T'_1)=\{ e'_{l'(j)} \mid  e_j \in \opcoc(e_k, T),\, j \neq i\} \cup \{e'_{i''}\}.\]
		\item [(2)] if $e_i \notin \opcoc(e_k, T_1)$, then
		\[\opcoc(e'_{l'(k)}, T'_1)=\{ e'_{l'(j)} \mid  e_j \in \opcoc(e_k, T),\, j \neq i\}.\]
		\end{itemize}
		\item [(b)] for $\ebar_k \in \Tbar$ with $k \neq i$, the fundamental cocycle of $\ebar'_{l'(k)}$ with respect to $\Tbar'_1$ is
		\[\opcoc(\ebar'_{l'(k)},\Tbar'_1)=\{ \ebar'_{l'(j)} \mid  \ebar_j \in \opcoc(\ebar_k, \Tbar),\, j \neq i\}.\]
	\end{itemize}
\end{itemize}

Note that if $k \textgreater i$ (respectively, $k \textless i$), then $l'(k)$ is greater (respectively, lower) than both $i'$ and $i''$. If $\alpha_{\That} (\ehat_k)=D$, this implies that there exists a lower-ordered edge in the fundamental cocycle of $\ehat_k$. If such an edge is labeled $i$, then from the above discussion, there exists an edge labeled $i'$ or $i''$ in the fundamental cocycle of $\ehat'_{l'(k)}$. If such an edge is labeled $j$ with $j \neq i$, there exists an edge labeled $l'(j)$ in the fundamental cocycle of $\ehat'_{l'(k)}$. Hence, $\ehat'_{l'(k)}$ is not the lowest-ordered edge in its fundamental cocycle and $\alpha_{\That'} (\ehat'_{l'(k)})=D$. The case $\alpha_{\That} (\ehat_k)=L$ is similar. This completes the proof.
\end{proof}

\vspace{0.5em}
The following proposition describes how a bigon extension affects the activity letters of the half-edges, excluding those incident to the vertex corresponding to the newly created region.

\begin{proposition}\label{prop:bigonexpropcore}
	Let $L_{s; l}$ be a link configuration, and $L'_{s; l'}$ is obtained from $L_{s; l}$ by a type-A bigon extension $E_{c_i}$. Let $G^b_{L,s}$ and $G^b_{L',s}$ denote the balanced overlaid checkerboard graphs associated with $L_s$ and $L'_s$, respectively. The notation for the crossing points and regions is as shown in Figure~\ref{fig:bigonextension}. Then the following hold:
	\begin{itemize}
	\item [(a)] Let $c_j$ be a crossing point of $L$ with $j \neq i$, then the half-edge $h_{(c_j, R)} \in G^b_{L,s}$ satisfies the EI-property if and only if $h'_{(c'_{l'(j)}, R')} \in G^b_{L',s}$ satisfies the EI-property. Moreover, in this case, we have
	\[\alpha \big(h_{(c_j, R)}\big) =\alpha \big(h'_{(c'_{l'(j)}, R')}\big).\]
	\item [(b)] The half-edge $h_{(c_i, R_S)}$ satisfies the EI-property if and only if $h'_{(c'_{i'}, R'_S)}$ satisfies the EI-property. Similarly, the half-edge $h_{(c_i, R_N)}$ satisfies the EI-property if and only if $h'_{(c'_{i''}, R'_N)}$ satisfies the EI-property. Moreover, in this case, we have
	\[\alpha \big(h_{(c_i, R_S)}\big) =\alpha \big(h'_{(c'_{i'}, R'_S)}\big) \, \text{ and } \, \alpha \big(h_{(c_i, R_N)}\big) =\alpha \big(h'_{(c'_{i''}, R'_N)}\big).\] 
	\item [(c)] Let $R_d$ denote a region belonging to $\{R_W, R_S\}$. Suppose $i' \textless i''$. Then the half-edge $h_{(c_{i}, R_d)}$ satisfies the EI-property if and only if  $h'_{(c'_{i'}, R'_d)}$ satisfies the EI-property. Moreover, in this case, we have
	\[\alpha \big(h_{(c_{i}, R_d)}\big) =\alpha \big(h'_{(c'_{i'}, R'_d)}\big).\]
	On the other hand, $h'_{(c'_{i''}, R'_d)}$ always satisfies the EI-property and $\alpha(h'_{(c'_{i''}, R'_d)})=D$.
	\end{itemize}
\end{proposition}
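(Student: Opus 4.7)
The strategy I would take is to transport the EI-property across the extension by composing the perfect matching / spanning tree bijection of Proposition~\ref{prop:mathingtreecor} with the double spanning tree correspondence of Proposition~\ref{prop:doublespancorre}, and then invoking Lemmas~\ref{lem:bigonexmain} and~\ref{lem:bigonexcorr} to match the activity letters. Throughout I would identify each matching $\Pcal$ with the double rooted spanning tree $\That = \eta(\Pcal)$ and read off the target of each edge from the rooted orientation, using the fact that $h_{(c,R)} \in \Pcal$ precisely when the edge at $c$ in $\That$ is directed toward $v_R$.

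For part (a) I would fix $h = h_{(c_j, R)}$ with $j \neq i$ and its counterpart $h' = h'_{(c'_{l'(j)}, R')}$. Every $\Pcal \ni h$ produces a $\That$ whose relevant edge $\ehat_k$ is rooted toward $v_R$, and each $\That'$ of $G^d_{L'}$ corresponding to $\That$ via Proposition~\ref{prop:doublespancorre} inherits the same rooting away from $c_i$; hence its edge $\ehat'_{l'(k)}$ is still rooted toward the vertex corresponding to $R'$, and $\theta(\That') \ni h'$. The reverse correspondence covers every $\Pcal' \ni h'$. Lemma~\ref{lem:bigonexmain} then forces $\alpha_{\That}(\ehat_k) = \alpha_{\That'}(\ehat'_{l'(k)})$, so the EI-property and the activity letter transfer in both directions.

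Parts (b) and (c) are structurally the same, but now the relevant edge is itself split by the extension. For $h_{(c_i, R_S)}$ the hypothesis forces $e_i \in \That$ with target $v_{R_S}$, placing us in Case~1 of Proposition~\ref{prop:doublespancorre}; tracing the root-to-$v_{R_S}$ path in the unique $\That'_0$ shows the last edge on that path becomes $e'_{i'}$, so $\theta(\That'_0) \ni h'_{(c'_{i'}, R'_S)}$. I would then rule out the possibility that a matching $\Pcal' \ni h'_{(c'_{i'}, R'_S)}$ originates from $\That'_1$ (where $e'_{i'}$ is forced to target $v_{R_B}$) or from $\That'_2$ (which does not contain $e'_{i'}$), establishing a bijection between the two sets of matchings; Lemma~\ref{lem:bigonexcorr}(a) then supplies the activity identity, and the $h_{(c_i, R_N)}$ case is symmetric. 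For part (c) with $R_d = R_W$, the hypothesis forces $\ebar_i \in \That$ (Case~2), and the two children $\That'_1, \That'_2$ place the target $v^{\Gbar}_{R_W}$ on $\ebar'_{i''}$ and $\ebar'_{i'}$ respectively, yielding $\Pcal' \ni h'_{(c'_{i''}, R'_W)}$ from the first and $\Pcal' \ni h'_{(c'_{i'}, R'_W)}$ from the second. Under $i' < i''$, Lemma~\ref{lem:bigonexcorr}(b) then produces both the identity $\alpha(h_{(c_i, R_W)}) = \alpha(h'_{(c'_{i'}, R'_W)})$ and the unconditional statement $\alpha(h'_{(c'_{i''}, R'_W)}) = D$; the claim for $R_d = R_S$ reduces to part (b).

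The hardest part of the argument will be the combinatorial bookkeeping in the last paragraph: one must rule out, for each new half-edge appearing in the conclusions, every $\That'$ that does not arise from the expected parent $\That$, and then verify that the rooted orientation in the surviving $\That'$ lines up with the claimed target region. This is a finite case analysis, but it depends on the local geometry at the extension site and on the ordering $i' < i''$, which is precisely what makes the asymmetric forced $D$ appear in part (c) and what dictates the use of Lemma~\ref{lem:bigonexcorr}(b) rather than (a) there.
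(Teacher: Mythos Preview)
Your proposal is correct and follows essentially the same route as the paper: both arguments compose the bijection of Proposition~\ref{prop:mathingtreecor} with the tree correspondence of Proposition~\ref{prop:doublespancorre}, and then invoke Lemma~\ref{lem:bigonexmain} for part~(a) and the two halves of Lemma~\ref{lem:bigonexcorr} for parts~(b) and~(c), tracking targets via the rooted orientation exactly as you describe. Your observation that the case $R_d = R_S$ in~(c) collapses to~(b) is really detecting a typo in the statement---the intended set is $\{R_W, R_E\}$, and the paper's own proof treats $R_W$ and $R_E$ just as you handle $R_W$.
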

\begin{proof}
Note that the correspondence between double rooted spanning trees naturally induces a correspondence between perfect matchings, as illustrated in Figure~\ref{fig:bigonextensionmatch}.\\
(a) Let $\Pcal'_1$ and $\Pcal'_2$ be two perfect matchings of $G^b_{L',s}$ that both contain the half-edge $h_{(c'_{l'(j)}, R')}$. There exist two perfect matchings $\Pcal_1$ and $\Pcal_2$ of $G^b_{L,s}$ such that the associated double spanning trees $\eta(\Pcal'_1)$ and $\eta(\Pcal'_2)$ correspond to $\eta(\Pcal_1)$ and $\eta(\Pcal_2)$, respectively. Note that in each corresponding pair of double spanning trees, the full-edge $\ehat'_{l'(j)}$ and its counterpart $\ehat_j$ are assigned the same direction relative to their respective roots. It follows that both $\Pcal_1$ and $\Pcal_2$ contain the half-edge $h_{(c_j, R)}$, which corresponds to the full-edge $\ehat_j$. \\
Thus by Lemma~\ref{lem:bigonexmain}, the activity letters satisfy 
\[ \alpha_{\eta(\Pcal'_1)}\big(\eta(h'_{(c'_{l'(j)}, R')})\big) =   \alpha_{\eta(\Pcal_1)}\big(\eta(h_{(c_j, R)})\big)  \, \text{ and } \,   \alpha_{\eta(\Pcal_2)}\big(\eta(h_{(c_j, R)})\big)= \alpha_{\eta(\Pcal'_2)}\big(\eta(h'_{(c'_{l'(j)}, R')})\big). \]
If $h_{(c_j, R)}$ satisfies the EI-property, we have
\[\alpha_{\eta(\Pcal_1)}\big(\eta(h_{(c_j, R)})\big) = \alpha_{\eta(\Pcal_2)}\big(\eta(h_{(c_j, R)})\big).\]
It follows that $h'_{(c'_{l'(j)}, R')}$ also satisfies the EI-property, proving the necessity part of the proposition. The sufficiency follows by a similar argument, and the equality of the corresponding activity letters is immediate. This completes the proof.\\
(b) We only need to prove the first part of the proposition; the other part can be proven similarly. Let $\Pcal_S$ be a perfect matching containing $h_{(c_i, R_S)}$, then $\eta(\Pcal_S)$ is a double spanning tree containing $e_i$. Hence, $\eta(\Pcal_S)$ corresponds to a unique double spanning tree $\That'_0(S)$. Denote the perfect matching $\theta(\That'_0(S))$ by $\Pcal'_S$; see subfigure (a) of Figure~\ref{fig:bigonextensionmatch} for an illustration. The perfect matching $\Pcal'_S$ contains $h'_{(c'_{i'}, R'_S)}$, and all perfect matchings containing both $h'_{(c'_{i'}, R'_S)}$ and $h'_{(c'_{i''}, R'_C)}$ arise in this way. By part (a) of Lemma~\ref{lem:bigonexcorr}, the activity letters satisfy
\[\alpha_{\eta(\Pcal_S)}\big(\eta(h_{(c_i, R_S)})\big) = \alpha_{\eta(\Pcal_S)}(e_i) =\alpha_{\That'_0(S)}(e'_{i'})=\alpha_{\eta(\Pcal'_S)}\big(\eta(h'_{(c'_{i'}, R'_S)})\big).\]
It follows that $h_{(c_i, R_S)}$ satisfies the EI-property if and only if $h'_{(c'_{i'}, R'_S)}$ satisfies the EI-property, and the equality of the corresponding activity letters is immediate.\\
(c) Assume $R_d=R_W$; the case $R_d=R_E$ is similar. Let $\Pcal'_{1W} \subset G^b_{L',s}$ be a perfect matching containing $h'_{(c'_{i''}, R'_W)}$, then $\eta(\Pcal'_{1W})$ contains $\ebar'_{i''}$. By part (b) of Lemma~\ref{lem:bigonexcorr}, we have $\alpha_{\eta(\Pcal'_{1W})}(\ebar'_{i''})=D$. This shows that $h'_{(c'_{i''}, R'_d)}$ satisfies the EI-property and $\alpha\big(h'_{(c'_{i''}, R'_d)}\big)=D$. \\
Now, let $\Pcal_W$ be a perfect matching containing $h_{(c_{i}, R_W)}$, then $\eta(\Pcal_W)$ is a double spanning tree containing $\ebar_i$. Hence, $\eta(\Pcal_W)$ corresponds to a double spanning tree $\That'_2(W)$ that contains $\ebar'_{i'}$. Denote the perfect matching $\theta(\That'_2(W))$ by $\Pcal'_{2W}$; see subfigure (b) of Figure~\ref{fig:bigonextensionmatch} for an illustration. The perfect matching $\Pcal'_{2W}$ contains $h'_{(c'_{i'}, R'_W)}$, and all such matchings arise in this way. By part (b) of Lemma~\ref{lem:bigonexcorr}, the activity letters satisfy
\[\alpha_{\eta(\Pcal_W)}\big(\eta(h_{(c_{i}, R_W)})\big) = \alpha_{\eta(\Pcal_W)}(\ebar_i) = \alpha_{\That'_2(W)}(\ebar'_{i'}) =\alpha_{\eta(\Pcal'_{2W})}\big(\eta(h'_{(c'_{i'}, R'_W)})\big).\]
It follows that $h_{(c_{i}, R_d)}$ satisfies the EI-property if and only if  $h'_{(c'_{i'}, R'_d)}$ satisfies the EI-property, and the equality of the corresponding activity letters is immediate. This completes the proof.
\end{proof}

\begin{corollary}
	Let $L_{s; l}$ be a link configuration, suppose $L$ has a bigon whose incident vertices are two crossing points labeled by consecutive numbers. Let $L''$ be the link universe obtained from $L$ by a bigon reduction at this bigon, $l''$ be the natural ordered labeling induced by $l$. Then, if $L_{s; l}$ satisfies the \textit{EI-property}, $L''_{s; l''}$ also satisfies the \textit{EI-property}.
\end{corollary}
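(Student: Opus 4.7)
The strategy is to view a bigon reduction as the inverse of a bigon extension, so that the corollary reduces almost immediately to an application of Proposition~\ref{prop:bigonexpropcore} read in the opposite direction. Concretely, if the bigon being reduced has incident crossings labeled $i$ and $i+1$ in $l$, and if $c$ denotes the crossing of $L''$ that replaces this bigon, then $L$ is precisely the bigon extension of $L''$ at $c$. Assigning $c$ the label $i$ in the induced labeling $l''$ on $L''$, one of the two natural ordered labelings on the bigon extension of $L''_{l''}$---namely the one with $i' = i$ and $i'' = i+1$---coincides exactly with $l$. If the reduced bigon was a black region the extension is of type A and Proposition~\ref{prop:bigonexpropcore} applies directly; if the reduced bigon was white the extension is of type B, which the paper already treats by a symmetric argument.

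Next, I would verify that parts (a), (b), and (c) of Proposition~\ref{prop:bigonexpropcore} together cover every half-edge of $G^b_{L'',s}$. Every half-edge at a crossing $c_j \neq c$ of $L''$ has a unique counterpart $h'_{(c'_{l'(j)}, R')}$ in $G^b_{L,s}$ by part (a). The four half-edges incident to $c$ itself---those into the two cap regions $R_N$, $R_S$ and the two side regions $R_W$, $R_E$---are covered by parts (b) and (c) respectively (under the identification $i' = i$, $i'' = i+1$). The content of these three parts is a collection of biconditionals: the EI-property of the $L''$-side half-edge is equivalent to the EI-property of its designated partner on the $L$-side.

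From here the conclusion is immediate. The hypothesis that $L_{s;l}$ satisfies the EI-property means every half-edge of $G^b_{L,s}$ satisfies it, and in particular all of the partners identified in parts (a)--(c) do. The biconditionals then transfer the EI-property to every half-edge of $G^b_{L'',s}$, which is precisely the statement that $L''_{s;l''}$ satisfies the EI-property.

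I do not anticipate any substantive obstacle. The only point genuinely requiring care is the verification that the consecutive-label hypothesis is exactly what forces $l$ to arise as a natural labeling $l'$ induced by $l''$: without this, the relabeling rule built into the definition of $l'$ would not match $l$, and the biconditional correspondence of parts (a)--(c) could not be invoked in the matched form needed. Once this identification is in place, the corollary is essentially a one-line consequence of the proposition read contrapositively.
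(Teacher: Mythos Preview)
Your proposal is correct and follows essentially the same approach as the paper: view $L_{s;l}$ as a bigon extension of $L''_{s;l''}$ and invoke Proposition~\ref{prop:bigonexpropcore} to transfer the EI-property back from each half-edge of $G^b_{L,s}$ to its counterpart in $G^b_{L'',s}$. Your write-up is more explicit than the paper's (which compresses the argument to a single appeal to the proposition without itemizing parts (a)--(c) or the role of the consecutive-label hypothesis), but the underlying logic is identical.
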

\begin{proof}
Let $G^b_{L,s}$ and $G^b_{L'',s}$ be the balanced overlaid checkerboard graph corresponding to $L_s$ and $L''_s$, respectively. Since $L''_{s; l''}$ is obtained from $L_{s; l}$ by a bigon reduction, $L_{s; l}$ can be obtained from $L''_{s; l''}$ by a bigon extension. By Proposition~\ref{prop:bigonexpropcore}, for each half-edge $h'' \in G^b_{L'',s}$, there exists a corresponding half-edge $h \in G^b_{L,s}$ such that if $h$ satisfies the \textit{EI-property}, then $h''$ also satisfies the \textit{EI-property}. Therefore, if $L_{s; l}$ satisfies the \textit{EI-property}, so does $L''_{s; l''}$.
\end{proof}

\noindent\textbf{Proof of Theorem~\ref{thm:localcri}.}
Without loss of generality, assume the bigon extension is of type A; the type B case is similar.\\
(a)We first address the sufficiency part. We claim that under these conditions, whatever the order of $i'$ and $i''$, one of the half-edges $h'_{(c'_{i'}, R'_C)}$ or $h'_{(c'_{i''}, R'_C)}$ will not satisfy the EI-property. Assume the contrary. Since $L$ is prime-like, by \cite[Theorem 4.7]{cohen2014kauffman}, each half-edge in $G^b_{L,s}$ is contained in some perfect matching. Since neither $v_S$ nor $v_N$ is an omitted vertex, there exist perfect matchings containing the half-edges $h_{(c_i, R_S)}$ and $h_{(c_i, R_N)}$, denoted by $\Pcal_S$ and $\Pcal_N$, respectively. Note that $\eta \big(h_{(c_i, R_S)}\big)$ and $\eta \big(h_{(c_i, R_N)}\big)$ are both $e_i$. By assumption, we have $\alpha \big(h_{(c_i, R_S)}\big)=\alpha \big(h_{(c_i, R_N)}\big)$, hence 
\[\alpha_{\eta(\Pcal_S)} \big(\eta(h_{(c_i, R_S)})\big)=\alpha_{\eta(\Pcal_S)}(e_i)=\alpha_{\eta(\Pcal_N)}(e_i)=\alpha_{\eta(\Pcal_N)} \big(\eta(h_{(c_i, R_N)})\big).\]  
Since $\eta(\Pcal_N)$ and $\eta(\Pcal_S)$ are both double spanning trees containing $e_i$, denote by $\That'_0(N)$ and $\That'_0(S)$ the double spanning trees corresponding to $\eta(\Pcal_N)$ and $\eta(\Pcal_S)$, respectively. By part (a) of Lemma~\ref{lem:bigonexcorr}, the activity letters satisfy 
\[\alpha_{\That'_0(S)} (e'_{i'})=\alpha_{\That'_0(S)} (e'_{i''})=\alpha_{\eta(\Pcal_S)} (e_i)=\alpha_{\eta(\Pcal_N)} (e_i)=\alpha_{\That'_0(N)} (e'_{i'})=\alpha_{\That'_0(N)} (e'_{i''}),\]
which is independent of the order of $i'$ and $i''$. Note that in the perfect matching $\Pcal'_N=\theta (\That'_0(N))$, we have $\theta (e'_{i'}) =h'_{(c'_{i'}, R'_C)}$; and in $\Pcal'_S=\theta (\That'_0(S))$, we have $\theta (e'_{i''})=h'_{(c'_{i''}, R'_C)}$, as illustrated in Figure~\ref{fig:bigonextensionmatch}. Therefore, if $h'_{(c'_{i'}, R'_C)}$ and $h'_{(c'_{i''}, R'_C)}$ both satisfy the EI-property, it must follow that
\[\alpha \big(h'_{(c'_{i'}, R'_C)}\big)=\alpha \big(h'_{(c'_{i''}, R'_C)}\big).\]

Now, consider other perfect matchings containing $h'_{(c'_{i'}, R'_C)}$ and $h'_{(c'_{i''}, R'_C)}$. Since at least one of $v_W$ and $v_E$ is not an omitted vertex, we may assume, without loss of generality, that it is $v_W$. Then there exists a perfect matching containing the half-edge $h_{(c_i, R_W)}$, denoted by $\Pcal_W$. Since the double spanning tree $\eta(\Pcal_W)$ contains the full-edge $\ebar_i$, denote by $\That'_1(W)$ and $\That'_2(W)$ the two double spanning trees corresponding to $\eta(\Pcal_W)$ that contain $e'_{i'}$ and $e'_{i''}$, respectively. By parts (a) and (b) of Lemma~\ref{lem:bigonexcontro}, regardless of the order of  $i'$ and $i''$, the activity letter satisfies 
\[\alpha_{\That'_1(W)}(e'_{i'}) \neq \alpha_{\That'_2(W)}(e'_{i''}).\]
Note that in the perfect matching $\Pcal'_{1W}=\theta(\That'_1(W))$, we have $\theta(e'_{i'})=h'_{(c'_{i'}, R'_C)}$; and in $\Pcal'_{2W}=\theta(\That'_2(W))$, we have $\theta(e'_{i''})=h'_{(c'_{i''}, R'_C)}$. Hence, the edges $\eta\big(h'_{(c'_{i'}, R'_C)}\big)$ and $\eta\big(h'_{(c'_{i''}, R'_C)}\big)$ have different activity letters with respect to $\eta(\Pcal'_{1W})$ and $\eta(\Pcal'_{2W})$, respectively. This leads to a contradiction, which proves our claim.

Now, we turn to the necessity part. To do this, we need to show that, if either one of the two conditions is not satisfied, then there exists a natural ordered labeling $l'$ such that every half-edge in $G^b_{L'}$ satisfies the EI-property. By Proposition~\ref{prop:bigonexpropcore}, it suffices to verify this for $h'_{(c'_{i'}, R'_C)}$ and $h'_{(c'_{i''}, R'_C)}$. Since at least one of $v_S$ and $v_N$ is not an omitted vertex, we may assume without loss of generality that it is $v_S$. As illustrated in Figure~\ref{fig:bigonextensionmatch}, there are two types of perfect matchings containing $h'_{(c'_{i''}, R'_C)}$: one corresponding to $\Pcal_S \subset G^b_{L,s}$ containing $h_{(c_i, R_S)}$ and another corresponding to some $\Pcal \subset G^b_{L,s}$ containing $h_{(c_i, R_W)}$ or $h_{(c_i, R_E)}$. For the first case, by the discussion in the sufficiency part, the activity letters satisfy
\[\alpha_{\That'_0(S)} \big(\eta(h'_{(c'_{i''}, R'_C)})\big)=\alpha_{\That'_0(S)} (e'_{i''})=\alpha_{\eta(\Pcal_S)} (e_i)=\alpha\big(h_{(c_i, R_S)}\big).\]
Also, for the second case, the activity letters satisfy
\[\alpha_{\That'_2(W)}(e'_{i''})=\alpha_{\That'_2(W)} \big(\eta(h'_{(c'_{i''}, R'_C)})\big)=L \, \text{ if }  i' \textgreater i''; \text{ and }  \alpha_{\That'_2(W)} \big(\eta(h'_{(c'_{i''}, R'_C)})\big)=D \, \text{ if } i' \textless i''.\]
Hence, if $\alpha\big(h_{(c_i, R_S)}\big)=D$, choose $i' \textless i''$; and if $\alpha\big(h_{(c_i, R_S)}\big)=L$, chosse $i' \textgreater i''$. This ensures that $h'_{(c'_{i''}, R'_C)}$ satisfies the EI-property. Note that there is a similar result for $h'_{(c'_{i'}, R'_C)}$. Since either $v_N$ is an omitted vertex or $\alpha\big(h_{(c_i, R_N)}\big) \neq \alpha\big(h_{(c_i, R_S)}\big)$, the above labeling choice also ensures that $h'_{(c'_{i'}, R'_C)}$ satisfies the EI-property. This completes the proof.\\
(b) Note that by part (a), if a bigon extension is admissible, then the activity letters satisfy $\alpha\big(h_{(c_i, R_N)}\big) \neq \alpha\big(h_{(c_i, R_S)}\big)$. Hence, the labeling choice specified in the statement is well defined. Also note that, in the necessity part of the proof of (a), we showed that such a labeling choice ensures that $L'_{s; l'}$ satisfies the EI-property. Therefore, it suffices to prove the uniqueness of this choice. Suppose we interchange $i'$ and $i''$. By Lemma~\ref{lem:bigonexcorr}, the activity letters of $e'_{i'}$ and $e'_{i''}$ remain unchanged in the spanning trees that contain both $e'_{i'}$ and $e'_{i''}$. But they change in spanning trees that contain only one of these edges. This implies that interchange $i'$ and $i''$ would cause either $h'_{(c'_{i'}, R'_C)}$ or $h'_{(c'_{i''}, R'_C)}$ to violate the EI-property. This establishes the uniqueness of the labeling choice.
\qed

\vspace{0.5em}
We introduce a special class of crossing points at which both types of admissible bigon extensions can be performed iteratively.

\begin{definition}
Let $L_{s; l}$ be a link configuration satisfying the \textit {EI-property}, a crossing point $c$ in $L$ is said to be \textbf{active} if, among the half-edges $h \in G^b_L$ incident to the vertex $v_c$, there are exactly two half-edges forming a corner at the $v_c$ such that each of them has activity letter $L$.
\end{definition}

\begin{proposition}\label{prop:actcross}
Let $L_{s; l}$ be a link configuration satisfying the \textit {EI-property}, and $L'_{s; l'}$ is obtained from $L_{s; l}$ by a bigon extension $E_{c_i}$. Then the following properties hold:
\begin{itemize}
	\item [(a)] Let $c_j$ with $j \neq i$ be an active crossing point of $L$. If the bigon extension $E_{c_i}$ is admissible, then $c'_{l'(j)}$ is an active crossing point of $L'$.
	\item [(b)] If $c_i$ is an active crossing point of $L$, then the bigon extension $E_{c_i}$ is always admissible, and $c'_i$ is an active crossing point of $L'$.
\end{itemize}
\end{proposition}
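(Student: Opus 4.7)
The plan is to handle part (a) by a direct transfer argument from Proposition~\ref{prop:bigonexpropcore}, and to handle part (b) in two stages: first establishing admissibility via Theorem~\ref{thm:localcri}(a), then verifying activeness of the distinguished new crossing by pinning down the activity letters on the newly created bigon.

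For part (a), since the bigon extension is performed at a crossing $c_i$ with $i\neq j$, each of the half-edges $h_{(c_j, R)}$ incident to $v_{c_j}$ corresponds bijectively to a half-edge $h'_{(c'_{l'(j)}, R')}$ incident to $v_{c'_{l'(j)}}$. This correspondence preserves the cyclic order of the regions around the crossing and, by Proposition~\ref{prop:bigonexpropcore}(a), also the activity letters. Hence the unique $L$-corner at $v_{c_j}$ transports to an $L$-corner at $v_{c'_{l'(j)}}$, which is exactly what is needed.

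For part (b), I may assume without loss of generality that $E_{c_i}$ is of type A; the type B case is analogous after swapping the roles of black and white regions. Labeling the four regions around $c_i$ cyclically as $R_N, R_E, R_S, R_W$ with $R_N, R_S$ black, the full-edge $e_{c_i}$ corresponds to the pair $\{h_{(c_i, R_N)}, h_{(c_i, R_S)}\}$. Since $R_N$ and $R_S$ are diagonally opposite they share no corner, and a check across the four possible $L$-corners at the active crossing $c_i$ shows that exactly one of $h_{(c_i, R_N)}$ and $h_{(c_i, R_S)}$ carries the letter $L$. In particular $\alpha(h_{(c_i, R_N)}) \neq \alpha(h_{(c_i, R_S)})$, so condition (ii) of Theorem~\ref{thm:localcri}(a) fails and $E_{c_i}$ is admissible; Theorem~\ref{thm:localcri}(b) then pins down the ordering of $i'$ and $i''$.

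Write $c'_i$ for the new crossing inheriting label $i$ (that is, $c'_{i'}$ when $i'<i''$ and $c'_{i''}$ when $i'>i''$). Proposition~\ref{prop:bigonexpropcore}(b)--(c) transfers the activity letters of the six half-edges at $v_{c'_{i'}}$ and $v_{c'_{i''}}$ that lie on the regions $R'_N, R'_S, R'_E, R'_W$; the main technical step is to identify the activity letters of the two remaining half-edges $h'_{(c'_{i'}, R'_C)}$ and $h'_{(c'_{i''}, R'_C)}$ on the newly created black region $R'_C$. By classifying the perfect matchings containing each such half-edge via Proposition~\ref{prop:doublespancorre}, and applying Lemma~\ref{lem:bigonexcorr}(a) to matchings inherited from one containing $h_{(c_i, R_N)}$ or $h_{(c_i, R_S)}$ together with Lemma~\ref{lem:bigonexcontro} to matchings inherited from one containing $h_{(c_i, R_W)}$ or $h_{(c_i, R_E)}$, I expect to obtain
\[\alpha\big(h'_{(c'_{i'}, R'_C)}\big) = \alpha\big(h_{(c_i, R_N)}\big) \quad\text{and}\quad \alpha\big(h'_{(c'_{i''}, R'_C)}\big) = \alpha\big(h_{(c_i, R_S)}\big);\]
consistency of the two sources is precisely what admissibility together with the ordering from Theorem~\ref{thm:localcri}(b) guarantees. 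With these values in hand, a short four-by-two case check over the $L$-corner at $v_{c_i}$ and the resulting order of $i', i''$ shows that $c'_i$ has exactly two $L$-labeled half-edges, and that they form a corner (namely $(R'_E, R'_C)$ or $(R'_C, R'_W)$ in the cyclic order at $c'_i$), while the other new crossing has at most one $L$-labeled half-edge. Hence $c'_i$ is active. The main obstacle throughout is the identification of the two activity letters on $R'_C$; everything else is routine bookkeeping on top of the already established results.
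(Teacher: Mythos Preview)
Your approach is essentially the same as the paper's, and the argument is correct in spirit. Part (a) is handled identically. For part (b), the paper's proof is terser: it simply observes that activeness of $c_i$ forces one of the two conditions in Theorem~\ref{thm:localcri}(a) to fail, and then defers the activeness of $c'_i$ to ``part (c) of Proposition~\ref{prop:bigonexpropcore} and the argument presented in the proof of Theorem~\ref{thm:localcri}.'' You unpack exactly that argument, re-deriving the activity letters on $R'_C$ via Lemmas~\ref{lem:bigonexcontro} and~\ref{lem:bigonexcorr}, which is fine and arguably more self-contained.

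One small gap: your admissibility argument checks only that condition (ii) of Theorem~\ref{thm:localcri}(a) fails, via $\alpha(h_{(c_i,R_N)}) \neq \alpha(h_{(c_i,R_S)})$. But if one of $R_N, R_S$ is adjacent to the distinguished segment $s$, the corresponding half-edge is absent from $G^b_{L,s}$ and that inequality is not well-posed. The paper covers this by noting that activeness of $c_i$ forces \emph{either} an endpoint of $\ehat_{c_i}$ to be omitted (condition (i) fails) \emph{or} the two half-edges to carry different letters (condition (ii) fails). You should add a sentence handling the omitted-vertex case; once done, the rest of your case analysis on the $L$-corner and the ordering from Theorem~\ref{thm:localcri}(b) goes through as written. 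The remark about the other new crossing having at most one $L$-labeled half-edge is extraneous to the statement and can be dropped.
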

\begin{proof}
(a) This follows immediately from part (a) of Proposition~\ref{prop:bigonexpropcore}.\\
(b) Let $\ehat_c$ be the edge extended by $E_{c_i}$. Since $c_i$ is active, either (i) one of the endpoints $\ehat_c$ is an omitted vertex, or (ii) the two half-edges corresponding to $\ehat_c$ are assigned different activity letters. In either case, the extension $E_{c_i}$ satisfies the admissibility condition. The fact that $c'_i$ is active then follows directly from part (c) of Proposition~\ref{prop:bigonexpropcore} and the argument presented in the proof of Theorem~\ref{thm:localcri}.
\end{proof}

We introduce a special class of link configurations satisfying the \textit{EI-property}, in which the activity letters of half-edges are easily determined.

\begin{definition}
A link configuration $L_{s; l}$ is said to be \textbf{admissible} if it can be obtained from the link configuration $H_{s; l_H}$ by a sequence of admissible bigon extensions, where $H$ is the Hopf link universe and $l_H$ is an ordered labeling of $H$.
\end{definition}

\begin{proposition}\label{prop:activityrule}
Let $L_{s; l}$ be an admissible link configuration, then $L_{s; l}$ satisfies the \textit{EI-property}. Moreover, the activity letter of each half-edge $h_{(c_j, R)} \in G^b_{L,s}$ is determined by the following rule: if $c_j$ is the crossing point with the lowest label among all the crossing points incident to $R$, then $\alpha \big(h_{(c_j, R)}\big)=L$. Otherwise, we have $\alpha \big(h_{(c_j, R)}\big)=D$.
\end{proposition}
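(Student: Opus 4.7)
The plan is to induct on the length $m$ of a sequence of admissible bigon extensions taking $H_{s;l_H}$ to $L_{s;l}$; such a sequence exists by the definition of an admissible configuration, and the EI-property is preserved at each step by construction. So what requires work is the explicit activity letter rule.

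For the base case $m=0$, Proposition~\ref{prop:HopfEI} already gives the EI-property; to extract the rule, note that $G^b_{H,s}$ has two crossing vertices and two round vertices (one black, one white), yielding exactly two perfect matchings whose associated double spanning trees of $G^d_H$ each consist of one edge of $G_H$ and one of $\Gbar_H$. A direct computation of the fundamental cocycles of the single-edge spanning trees shows that the edge with the smaller label is internally active while the other is internally inactive; translating through the bijection $\theta$ of Proposition~\ref{prop:mathingtreecor} gives $\alpha(h_{(c_j,R)})=L$ precisely when $c_j$ is the lowest-labeled crossing incident to $R$.

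For the inductive step, assume the rule on $L_{0;s;l_0}$ and let $L_{s;l}$ arise from it via an admissible bigon extension $E_{c_i}$. By Theorem~\ref{thm:localcri}(b) the pair $\{i',i''\}=\{i,i+1\}$ is assigned in exactly one way, dictated by whether $\alpha(h_{(c_i,R_N)})=L$ (equivalently $\alpha(h_{(c_i,R_S)})=D$) or the opposite. I would then split the half-edges of $G^b_{L,s}$ into four families and verify the rule on each. Family (i): the $h'_{(c'_{l'(j)},R')}$ with $j\neq i$ and $R'\neq R'_C$, handled by Proposition~\ref{prop:bigonexpropcore}(a) combined with the observation that $l'$ is order preserving and $\{i',i''\}$ fills exactly the slot formerly occupied by $i$, so that the minimum-labeled crossing incident to $R'$ is the relabeled minimum-labeled crossing incident to $R$. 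Family (ii): the half-edges $h'_{(c'_{i'},R'_S)}$ and $h'_{(c'_{i''},R'_N)}$, handled by Proposition~\ref{prop:bigonexpropcore}(b); combined with the label prescription of Theorem~\ref{thm:localcri}(b), the new crossing incident to $R'_S$ (resp.\ $R'_N$) has label equal to the smaller of $\{i',i''\}$ exactly when $c_i$ was the minimum-labeled crossing incident to $R_S$ (resp.\ $R_N$) in $L_0$. Family (iii): the four half-edges joining a new crossing to $R'_W$ or $R'_E$, handled by Proposition~\ref{prop:bigonexpropcore}(c); both new crossings are incident to these regions, the lower-labeled one has label $i$ and inherits $\alpha(h_{(c_i,R_d)})$, while the higher-labeled one has label $i+1$ and receives $D$, matching the rule directly. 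Family (iv): the two half-edges incident to the bigon $R'_C$, for which the computation at the end of the proof of Theorem~\ref{thm:localcri}(a) shows that the lower-labeled new crossing receives $L$ and the other receives $D$, matching the rule because $R'_C$ is incident only to $c'_{i'}$ and $c'_{i''}$. The type B case is symmetric upon swapping the roles of $G_L$ and $\Gbar_L$.

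The main obstacle lies in family (i) when $R'\in\{R'_N,R'_S,R'_W,R'_E\}$: inserting the new crossings with labels in $\{i,i+1\}$ could in principle lower the minimum label incident to $R'$ below the relabeled previous minimum. The key check is that the Theorem~\ref{thm:localcri}(b) prescription of $\{i',i''\}=\{i,i+1\}$ and of which new crossing is adjacent to $R'_S$ versus $R'_N$ is exactly what is needed to keep the new minimizer consistent with the inductive hypothesis; once this bookkeeping is done, the remaining families and the type B case follow directly.
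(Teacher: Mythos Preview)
Your approach is exactly the paper's: induct on the number of admissible bigon extensions, handle the base case by direct inspection of $G^b_{H,s}$, and in the inductive step use Proposition~\ref{prop:bigonexpropcore} together with Theorem~\ref{thm:localcri}(b) to propagate the rule through the four natural families of half-edges (the paper organizes the same case analysis by region rather than by half-edge, but the content is identical).

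One slip to fix in family (ii): your biconditional is stated backwards. If $c_i$ is the minimum-labeled crossing incident to $R_S$, the inductive hypothesis gives $\alpha(h_{(c_i,R_S)})=L$, and then Theorem~\ref{thm:localcri}(b) forces $i'>i''$, so $c'_{i'}$ receives the \emph{larger} label $i+1$, not the smaller. This does not break the argument, because the point you actually need is that $c'_{i'}$ is the minimum-labeled crossing incident to $R'_S$, and that holds regardless of which of $\{i,i+1\}$ it receives: every other crossing incident to $R'_S$ comes from some $c_k$ with $k>i$ and hence has label $k+1\geq i+2$. The paper makes exactly this monotonicity observation; once you replace your biconditional with it, family (ii) goes through and the rest of your outline is complete.
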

\begin{proof}
By Proposition~\ref{prop:HopfEI}, the link configuration $H_{s; l_H}$ always satisfies the EI-property. Since $L_{s; l}$ is obtained from $H_{s; l_H}$ by a sequence of admissible bigon extensions, and each such extension preserves the EI-property, it follows that $L_{s; l}$ also satisfies the EI-property.  This completes the proof of the first part of the proposition.

We prove the second part of the proposition by induction. Note that the checkerboard graph $G_H$ of $H$, as well as its dual $\Gbar_H$, each consists of two edges between the same two vertices. It follows that the edge labeled 1 is always active, while the edge labeled 2 is always inactive. Consequently, for the half-edges in $G^b_{H,s}$, the activity letters satisfy $h_{(c_j, R)}=L$ if $j=1$ and $h_{(c_j, R)}=D$ if $j=2$. Therefore, the statement holds for the base case $H_{s; l_H}$. Also note that $H$ is prime-like.

Now, assume the statement holds for a link configuration $L_{s; l}$ that satisfies the EI-property and in which $L$ is prime-like. Let $L'_{s;l'}$ be obtained from $L_{s; l}$ by an admissible bigon extension $E_{c_i}$. By Proposition~\ref{prop:prime}, $L'$ is also prime-like. Without loss of generality, we assume that $E_{c_i}$ is of type A. We adopt the notation for regions as depicted in Figure~\ref{fig:bigonextension}.

For a half-edge $h'_{(c'_j, R')} \in G^b_{L',s}$, where $R'$ is the newly created region $R'_C$, it follows from the proof of part (a) of Theorem~\ref{thm:localcri} that the activity letters satisfy 
\[\alpha \big(h'_{(c'_i, R'_C)} \big)=L \, \text{ and } \, \alpha \big(h'_{(c'_{i+1}, R'_C)} \big)=D,\] which obey the prescribed rule.

Now, consider the region $R' \in \{R'_S, R'_N\}$. We will prove the case $R'_S$; the argument for $R'_N$ is analogous. By parts (a) and (b) of Proposition~\ref{prop:bigonexpropcore}, it suffices to show that if $\alpha \big(h_{(c_{j}, R_S)} \big)=L$, then $l'(j)$ is the lowest label among all the labels of crossing points incident to $R'_S$. If $\alpha \big(h_{(c_{i}, R_S)} \big)=L$, by part (b) of Theorem~\ref{thm:localcri}, the crossing point $c'_{i'}$ is labeled $i+1$. By assumption, each crossing point $c_k$ incident to $R_S$ is labeled $k$ with $k \textgreater i$. Hence, the crossing point $c'_{l'(k)}$ is labeled $k+1$ in $L'$. Thus, $i+1$ is the lowest label among all such $k+1$. On the other hand, if $\alpha \big(h_{(c_{i}, R_S)} \big)=D$, then $c'_{i'}$ is labeled $i$. By assumption, there exists a crossing point $c_k$ with $k \textless i$ such that $\alpha \big(h_{(c_{k}, R_S)} \big)=L$. Therefore, $k$ is still the lowest label among all crossing points incident to $R'_S$. This completes the verification for $R'_S$.

Now, consider the region $R' \in \{R'_W, R'_E\}$. We will prove the case $R'_W$; the argument for $R'_E$ is analogous. By part (c) of Proposition~\ref{prop:bigonexpropcore}, we have 
\[\alpha \big(h_{(c_{i}, R_W)}\big) =\alpha \big(h'_{(c'_{i}, R'_W)}\big) \, \text{ and } \, \alpha \big(h'_{(c'_{i+1}, R'_W)}\big)=D.\] 
If $\alpha \big(h_{(c_{i}, R_W)}\big)=L$, then by assumption, each crossing point $c_k$ incident to $R_W$ is labeled $k$ with $k \textgreater i$. Therefore, the crossing point $c'_{l'(k)}$ is labeled $k+1$ in $L'$. Thus, $i$ is the lowest label among all such $k+1$, and we indeed have $\alpha \big(h'_{(c'_{i}, R'_W)}\big)=L$. On the other hand, if $\alpha \big(h_{(c_{i}, R_W)}\big)=D$, then by assumption, there exists a crossing point $c_k$ with $k \textless i$ such that $\alpha \big(h_{(c_{k}, R_W)} \big)=L$. Hence, $k$ remains the lowest label among all crossing points incident to $R'_S$. And we also have 
\[\alpha \big(h'_{(c'_k, R'_W)}\big)=L \, \text{ and } \, \alpha \big(h'_{(c'_{i}, R'_W)}\big)=D,\] 
which obey the prescribed rule. This completes the verification for $R'_W$.

Now, consider a region $R'$ distinct from those previously discussed and not adjacent to $s$. By assumption, there exists a crossing point $c_k$ such that $\alpha \big(h_{(c_{k}, R)}\big)=L$. The crossing point $c'_{l'(k)}$ of $L'$ is labeled $k$ if $k \textless i$, and $k+1$ if $k \textgreater i$. In either case, $c'_{l'(k)}$ is indeed the crossing point with the lowest label among all those incident to $R'$. By part (a) of Proposition~\ref{prop:bigonexpropcore}, we also have $\alpha \big(h'_{(c'_{l'(k)}, R')}\big)=L$. This completes the proof.
\end{proof}

We are now prepared to prove the first main theorem.

\begin{theorem}\label{thm:perexpan}
Let $\Ltil$ be a link diagram, and let $L$ be its corresponding link universe. If there exists a distinguished segment $s$ and an ordered labeling $l$ such that the link configuration $L_{s; l}$ is admissible, then the Kauffman bracket polynomial of $\Ltil$ admits a perfect matching expansion:
\begin{equation}\label{bracketper}
	\Gamma_{G^b_{L, s}}(A)=\sum_{\Pcal}\prod_{h \in \Pcal} \altil (h)|_K,
\end{equation}
where the sum is taken over all perfect matchings of the balanced overlaid checkerboard graph $G^b_{L, s}$, and the activity letter $\alpha (h)$ is assigned according to the rule described in Proposition~\ref{prop:activityrule}.
\end{theorem}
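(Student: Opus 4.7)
The plan is to observe that Theorem~\ref{thm:perexpan} is essentially a clean packaging of two results already established in the excerpt, so the argument will be a short concatenation rather than a new construction. All the real combinatorial content has been absorbed into the study of admissible bigon extensions, and what remains is to verify that the hypothesis of admissibility feeds correctly into the general EI-property expansion.

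First, I would invoke Proposition~\ref{prop:activityrule}. Its two conclusions are precisely what the theorem needs: (i) every admissible link configuration $L_{s;l}$ satisfies the EI-property, so that $\alpha(h)$ is well-defined independently of the perfect matching containing $h$; and (ii) the value of $\alpha(h_{(c_j,R)})$ is exactly $L$ when $c_j$ has the lowest label among the crossings incident to $R$, and $D$ otherwise. Since admissibility is the standing hypothesis of the theorem, both conclusions apply to $L_{s;l}$ without further work. In particular, the rule for $\alpha(h)$ matches verbatim the rule described in the statement.

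Second, with the EI-property in hand, I would apply the (unnamed) proposition stated just before Proposition~\ref{prop:HopfEI}, which asserts that any link configuration satisfying the EI-property admits the expansion
\[
\Gamma_{G^b_{L,s}}(A) \;=\; \sum_{\Pcal} \prod_{h \in \Pcal} \altil(h)\big|_K,
\]
summed over perfect matchings $\Pcal$ of $G^b_{L,s}$. Since $\altil(h) = \opsign(h)\alpha(h)$ with $\opsign(h)$ depending only on the crossing sign and the region color, the signed activity letter is automatically well-defined once $\alpha(h)$ is, so no additional check is needed to combine the two propositions. Substituting the explicit rule from step one into this expansion yields exactly the formula claimed in Theorem~\ref{thm:perexpan}.

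I do not anticipate any genuine obstacle in writing this out: the heavy lifting was done earlier in proving Theorem~\ref{thm:localcri} and in the inductive proof of Proposition~\ref{prop:activityrule}, both of which tracked how activity letters are preserved or produced under an admissible bigon extension starting from the Hopf link universe. The present theorem is simply the statement that this induction, combined with the spanning tree expansion reformulated via double spanning trees and perfect matchings (Proposition~\ref{prop:mathingtreecor} and Equation~\eqref{eq:bracketpolydoublespaneva}), gives a single closed-form perfect matching expansion of $\langle \Ltil \rangle$. Hence the proof is a two-line appeal to the earlier results.
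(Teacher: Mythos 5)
Your proposal is correct and follows essentially the same route as the paper: cite Proposition~\ref{prop:activityrule} for the EI-property and the explicit activity-letter rule, then feed this into the general expansion valid for any EI-configuration. In fact you cite the right auxiliary result — the unnamed proposition preceding Proposition~\ref{prop:HopfEI} — whereas the paper's proof text points to Proposition~\ref{prop:HopfEI} itself, which is evidently a mislabeled reference to that same expansion statement.
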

\begin{proof}
According to Proposition~\ref{prop:activityrule}, the admissible link configuration $L_{s; l}$ satisfies the EI-properrty, and the activity letter associated to each half-edge $h \in G^b_{L, s}$ is determined by the rule described there. Hence, the result follows directly from Proposition~\ref{prop:HopfEI}.
\end{proof}

\begin{example}
We present an example and a counterexample to illustrate the application of the perfect matching expansion of the Kauffman bracket polynomial.
\begin{itemize}
	\item [(a)] Consider the Whitehead link diagram $\Ltil$ with five crossing points. An ordered labeling $l$ and a distinguished segment $s$ are shown in subfigure (a) of Figure~\ref{fig:example1}, and each edge in $G^b_{L,s}$ is labeled with signed activity letters according to the rules in Proposition~\ref{prop:activityrule}. The weight associated with each perfect matching is already listed in subfigure (b) of Figure~\ref{fig:stateslattice}. Summing over these weights, we obtain the Kauffman bracket polynomial of $\Ltil$:
	\[\left\langle \Ltil \right\rangle=A^{-9}-A^{-5}+2A^{-1}-A^3+2A^7-A^{11}.\]
	\item (b) A counterexample illustrating a non-admissible type-A bigon extension $E_{c_3}$ is shown in subfigure (b) of Figure~\ref{fig:example1}. In this case, neither endpoint of the edge $e_3$, which is extended by $E_{c_3}$, is an omitted vertex. And both half-edges corresponding to $e_3$ are assigned the activity letter $D$, according to the rules in Proposition~\ref{prop:activityrule}. Consequently, by Theorem~\ref{thm:localcri}, this bigon extension is not admissible.
\end{itemize}
 
\end{example}

\begin{figure}[hbp]
	\centering
	\subfigure[]{
		\includegraphics[width=0.57\textwidth, height=0.11\textheight]{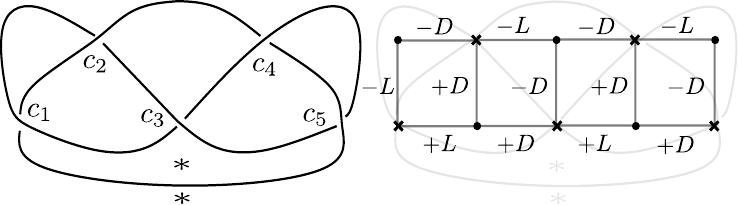}}
	\hspace{2mm}
	\subfigure[]{
		\includegraphics[width=0.37\textwidth,, height=0.10\textheight]{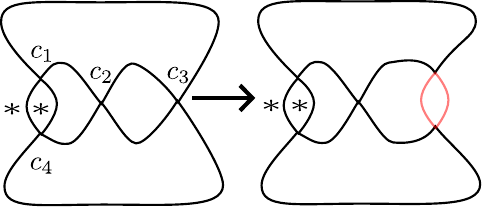}}
	\caption{(a) An example of the Whitehead link. (b) A counterexample of a non-admissible bigon extension, drawn in red.}
	\label{fig:example1}
\end{figure}

\subsection{Admissible link configurations}
We begin by introducing some constructions adapted from \cite{kauffman1983formal}; for the original and complete exposition, see the reference therein.

\begin{definition}
Let $L_s$ be a link universe with a distinguished segment $s$. Suppose $s$ is adjacent to the unbounded planar region. A \textbf{string} is obtained from $L_s$ by removing an iterior point from the segment $s$. The resulting diagram is drawn in the plane so that the two new endpoints of $s$ extend horizontally, one to the left and the other to the right.
\end{definition}

\begin{definition}\label{def:strings}
	We introduce the following strings:
	\begin{itemize}
		\item [(a)] The \textit{shell string} $S_0$ is obtained from $H_s$ where $H$ is the Hopf link universe with $s$ adjacent to the unbounded planar region. In this paper, we always assume that the black region $R^B_s$ lies below the string. We denote the upper arc of $S_0$ by $\bm{\gamma}_+$ and the lower arc by $\bm{\gamma}_-$. Let $\{q^0, q^{\infty} \}=\bm{\gamma}_+ \cap \ \bm{\gamma}_-$ be the two intersection points, where $q^0$ lies on the left and $q^{\infty}$ on the right. The common segment between the arcs, which lies within the circular region, is denoted by $\bm{\gamma}_0$, as shown in subfigure (a) of Figure~\ref{fig:threestrings}.
		\item [(b)] The \textit{signed curl string} is a curl, as shown in subfigure (b)
		of Figure~\ref{fig:threestrings}. The positive curl string, denoted by $+C_j$, is a curl where the loops lie above the string and has a marked point $+p_j$ on its final segment. Similarly, the negative curl string, denoted by $-C_j$ is a curl where the loops lie below the string and has a marked point $-p_j$ on its final segment.
		\item[(c)] The \textit{signed shell string} is a shell string, as shown in subfigure (c)
		of Figure~\ref{fig:threestrings}. The positive shell string, denoted by $+S_j$, has a marked point $+p_j^C$ on its upper arc $\bm{\gamma}^j_+$. Similarly, the negative shell string, denoted by $-S_j$ , has a marked point $-p_j^C$ on the lower arc $\bm{\gamma}^j_-$.
	\end{itemize}
\end{definition}

\begin{figure}[hbp]
	\centering
	\subfigure[]{
		\includegraphics[width=0.18\textwidth, height=0.1\textheight]{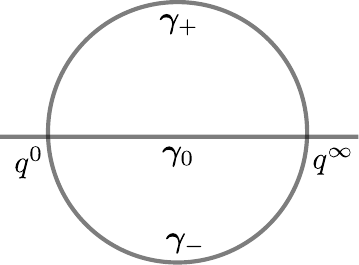}}
	\hspace{2mm}
	\subfigure[]{
		\includegraphics[width=0.2\textwidth, height=0.1\textheight]{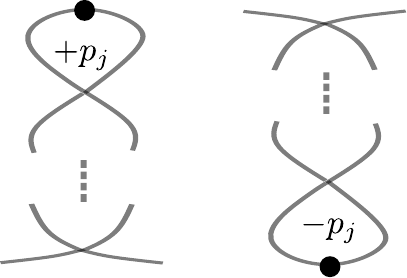}}
	\hspace{3mm}
	\subfigure[]{
		\includegraphics[width=0.35\textwidth, height=0.1\textheight]{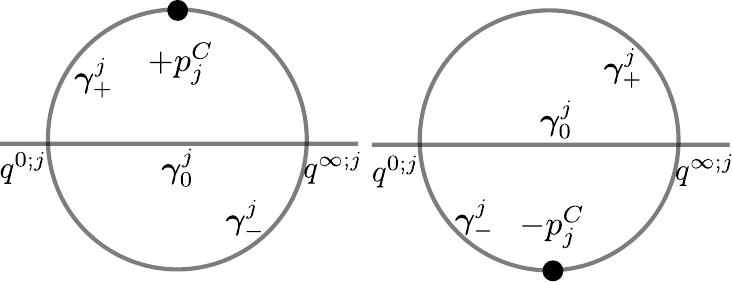}}
	
	\caption{(a) The base string $S_0$. (b) The positive and negative curl strings $+C_j$ and $-C_j$. (c) The positive and negative shell strings $+S_j$ and $-S_j$.}
	\label{fig:threestrings}
\end{figure}

\begin{definition}[Attach points]\label{def:points}
We attach various types of marked points to shell strings as follows:
\begin{itemize}
\item [(a)] For the shell string $S_0$, we first attach a sequence of points to the arc $\bm{\gamma}_0$. These points are linearly arranged from left to right in increasing order and come in three types: the trivial type, denoted by $p_k$; the curl-type, denoted by $p_k^C$; and the shell-type, denoted by $p_k^S$. Each point on $\bm{\gamma}_0$ is assigned a sign from $\{+, -\}$. For each point on $\bm{\gamma}_0$ labeled $k$ with sign $+$ (respectively, sign $-$), we attach a corresponding point $q^k$ on the upper arc $\bm{\gamma}_+$ (respectively, the lower arc $\bm{\gamma}_-$). The points on both $\bm{\gamma}_+$ and $\bm{\gamma}_-$ are also arranged linearly from left to right in increasing order. See the left part of Figure \ref{fig:ASIprocedure} for an example.
\item [(b)] For the shell string $S_j$, attach a sequence of points to the arc $\bm{\gamma}^j_0$. These points are linearly arranged from left to right in increasing order and are all of the trivial type, denoted by $p^j_k$. Each point on $\bm{\gamma}^j_0$ is assigned a sign from $\{+, -\}$. For each such point, attach a corresponding point $q^{k;j}$ on $\bm{\gamma}^j_+$ and $\bm{\gamma}^j_-$ as in the case of $S_0$. Furthermore, all points on the upper arc $\bm{\gamma}^j_+$ (respectively, the lower arc $\bm{\gamma}^j_-$) must lie to the left of the marked point $+p_j^C$ (respectively, $-p_j^C$).
\end{itemize}
\end{definition}

\begin{definition}[String composition]
The following operations on strings were introduced by Kauffman in \cite{kauffman1983formal}; for illustrations, see subfigures (a) and (b) of Figure~\ref{fig:stringsum}.
\begin{itemize}
\item [(a)] If $A$ and $B$ are strings, their \textit{sum} $A \oplus B$ is obtained by splicing the right line of $A$ to the left line of $B$.
\item [(b)] If $A$ and $B$ are strings, $p$ is a point on a non-connecting edge of $A$, and $p$ is not a crossing point, then the \textit{enclosed sum} $A \oplus [B,p]$ of $A$ and $B$ is obtained by replacing the trivial string at $p$ by $B$. In this situation, the string $A$ is called a $carrier$ and the string $B$ is called a rider in $A \oplus [B,p]$.
\end{itemize}
\end{definition}

\begin{definition}[Interaction]
Let $D$ be the diagram obtained from a sequence of string compositions. Let $(p,q)$ be a pair of points on $D$ such that there exists a region incident to both $p$ and $q$. An \textit{interaction} at $(p,q)$ is defined as the operation to glue the points $p$ and $q$ together to form a new crossing point, denoted by $c(q)$. see subfigure (c) of Figure~\ref{fig:stringsum} for an illustration.
\end{definition}

\begin{figure}[htp]
	\centering
	\includegraphics[width=0.8\textwidth]{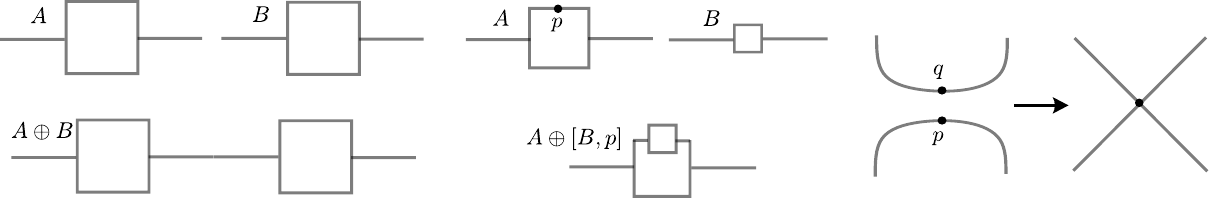}
	\caption{(a) The sum of two strings. (b) The enclosed sum of two strings. (c) The interaction between two points.}
	\label{fig:stringsum}
\end{figure}

\begin{definition}
Let the strings be as in Definition~\ref{def:strings}. A link configuration can be constructed by a process called \textbf{attaching points-string composition-interaction procedure} (abbreviated as the \textbf{ASI procedure}), which proceeds in the following steps:
\begin{itemize}
	\item [(i)] \textbf{Attach points}\\
	Attach a collection of marked points to the shell strings $S_0$ and each $S_j$ according to the rules given in Definition~\ref{def:points}.
	\item [(ii)] \textbf{String composition}\\
	The string composition is performed in two stages:
	\begin{itemize}
	\item [(1)]	Shell Attachment:\\
	Use the set of shell strings $S_i$ as riders and perform a sequence of enclosed sums on the base shell $S_0$ at the set of points $p_i^S$ (of shell type) on $\bm{\gamma}_0$, forming the intermediate diagram:
	\[D'= S_0 \oplus [+S_j,\ +p_j^S]\cdots \oplus [-S_k,\ -p_k^S] \cdots. \]
	\item [(2)] Curl Attachment:\\
	Next, use the set of curl strings $C_i$ as riders, perform a sequence of enclosed sums on the diagram $D'$ at the points $p_i^C$ (of curl type), forming the final diagram:
	\[D= D' \oplus [+C_j,\ +p_j^C]\cdots \oplus [-C_k,\ -p_k^C] \cdots.\] 
	\end{itemize}
	\item [(iii)] \textbf{Interaction}\\
	Finally, perform interactions at all specified pairs of points to reconstruct the crossings. Specifically, for each index $i$, apply interactions at the pairs $(p_i, q^i)$; and for each shell string $S_j$ apply interactions at the pairs $(p^j_i, q^{i;j})$. 
\end{itemize}
Thus, completing the ASI procedure by identifying the two ends of the string along the segment $s$, we obtain a link universe $L$ with distinguished segment $s$. Suppose $L$ has $n$ crossing points, the ordered labeling $l$ is defined according to the following rules:
\begin{itemize}
	\item [-] The crossing points $c(q^0)$ and $c(q^{\infty})$ are assigned the smallest and largest labels, respectively:
	\[l(c(q^0))=1 \text{, and } l(c(q^{\infty}))=n.\]
	\item [-] Let $c(p^C_j)$ denote the crossing point corresponding to the curl-type point $p^C_j$. Then the labels satisfy:
	\[l(c(q^{j-1})) \textless l(c(q^{0;j})) \leq l(c(q^{\infty;j})) \leq l(c(p^C_j)) \leq l(c(q^{j})) \textless l(c(q^{0;j+1}))\]
	\item [-] For the crossing points arising from the shell string $S_j$, the crossing corresponding to $q^{0;j}$ is assigned the minimal label, and the one corresponding to $q^{\infty;j}$ the maximal. The remaining crossings are labeled in increasing order from left to right according to their associated points $q^{i;j}$.
	\item [-] For the crossing points arising from the curl string $C_j$, the crossing point corresponding to $p_j^C$ is assigned the minimal label, and that corresponding to $q_j$ the maximal. The remaining crossings are labeled in increasing order according to their position along the curl, with crossings nearer to $q_j$ receiving larger labels.
\end{itemize}
\end{definition}

\begin{example}
An illustrative example of the ASI procedure is shown in Figure~\ref{fig:ASIprocedure}. In this example, the set of attached points on the base string $S_0$ is $[+p_1, -p_2, +p^S_3, -p^C_4]$, and the set of attached points on the signed shell string $+S_3$ is $[+p^3_1, -p^3_2]$. A 1-component curl string $+C_3$ is attached to the point $+p^C_3$, and a 2-component curl string $-C_4$ is attached to $-p^C_4$. Note that attaching a 0-component curl string is equivalent to replacing a curl-type marked point with a trivial-type marked point; this is permitted when the curl-type marked point lies on a signed shell string $S_j$.
\end{example}

\begin{figure}[htp]
	\centering
	\includegraphics[width=1\textwidth]{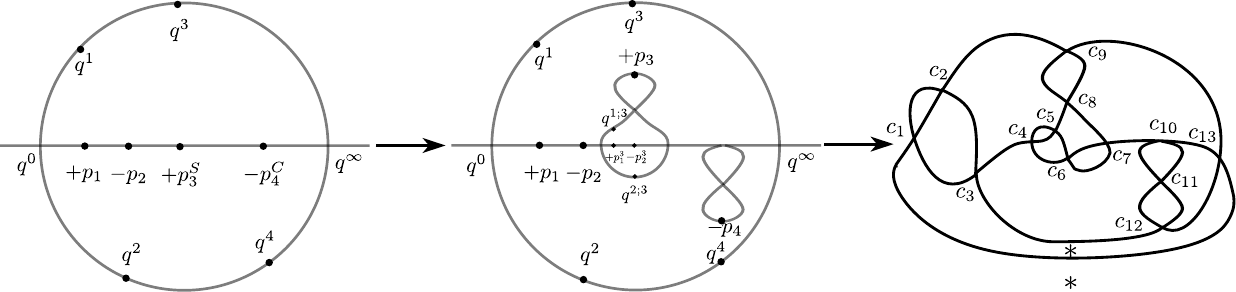}
	\caption{An illustrative example of the ASI procedure.}
	\label{fig:ASIprocedure}
\end{figure}

We now show that link configurations constructed via the ASI procedure fully characterize the class of admissible link configurations.

\begin{theorem}\label{thm:adlinktri}
Let $H^0_{s;l_H}$ denote the Hopf link configuration obtained from the base shell string $S_0$ (without attached points) via the ASI procedure. Then a link configuration $L_{s;l}$ is an admissible link configuration obtained from this specific configuration $H^0_{s;l_H}$ if and only if it can be constructed via the ASI procedure.
\end{theorem}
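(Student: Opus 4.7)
The proof is by induction, establishing both directions separately.

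Sufficiency (ASI $\Rightarrow$ admissible): The plan is to induct on the number of ASI building blocks added after the base configuration $H^0_{s;l_H}$. The base case is trivial, since $H^0_{s;l_H}$ is admissible by definition. For the inductive step, I would show that each ASI operation---attaching a trivial-type point and interacting, attaching a curl string of length $n$ and interacting, or attaching a shell string and performing its associated interactions---can be realized as a sequence of admissible bigon extensions. The key tool is Proposition~\ref{prop:actcross}: the crossings $c(q^0)$, $c(q^\infty)$, and the outermost crossings of each new shell (namely the interactions at $q^{0;j}$ and $q^{\infty;j}$) are active immediately after their creation, so subsequent bigon extensions in their vicinity are automatically admissible and preserve activeness. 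Concretely, a trivial-type point corresponds to one bigon extension at an active crossing; a curl string of length $n$ corresponds to $n$ successive bigon extensions, each performed at the newly-created active crossing from the previous extension; and a shell attachment corresponds to beginning a new Hopf-like substructure via two bigon extensions, followed by its interior attachments built recursively.

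Necessity (admissible $\Rightarrow$ ASI): The plan is to induct on the number of admissible bigon extensions performed after $H^0_{s;l_H}$. Assume the inductive hypothesis for an admissible configuration $L_{s;l}$, and let $L'_{s;l'}$ be obtained by one further admissible bigon extension $E_{c_i}$. By the criterion in Theorem~\ref{thm:localcri}(a), the extended edge $\ehat_{c_i}$ must either (i) be incident to an omitted vertex, or (ii) have its two half-edges assigned different activity letters in $G^b_{L,s}$. Combining Proposition~\ref{prop:activityrule} with the known ASI structure of $L_{s;l}$, condition (i) occurs only at edges near the distinguished segment $s$ or near an existing shell's outer Hopf structure, while condition (ii) forces $c_i$ to be an active crossing. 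A case analysis then identifies $E_{c_i}$ as exactly one of the following ASI moves: lengthening an existing curl attachment, inserting a new trivial point on some arc $\bm{\gamma}_0$ or $\bm{\gamma}^j_0$, initiating a fresh curl string at a curl-type marked point, or initiating a fresh shell string at a shell-type marked point. In each case the ASI construction of $L_{s;l}$ can be enlarged by one attached point (or one component of a curl) to produce $L'_{s;l'}$.

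The principal obstacle will be verifying that the unique label ordering forced by Theorem~\ref{thm:localcri}(b) coincides with the ordering rules prescribed by the ASI procedure. This requires tracking the activity letters of all half-edges through the construction using Proposition~\ref{prop:bigonexpropcore}, and confirming that the nested order constraints $l(c(q^{j-1})) < l(c(q^{0;j})) \leq l(c(q^{\infty;j})) \leq l(c(p^C_j)) \leq l(c(q^j))$ on the ordered labeling are precisely those enforced by part (b) of Theorem~\ref{thm:localcri} at each admissible extension. Handling the shell-inside-shell case will need particular care, as the activity letters on the inner shell's boundary depend on the outer labeling in a way that must be checked to match the recursive ASI labeling rules.
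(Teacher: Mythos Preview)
Your approach is essentially the same as the paper's: both directions are proved by induction, with sufficiency showing that each ASI building block is realized by admissible bigon extensions at suitable crossings, and necessity showing that every admissible extension of an ASI configuration is again of ASI form. The paper organizes the argument into three nested cases (trivial points only, then adding curl-type points, then shell-type points) and interleaves sufficiency and necessity within each case, but the substantive content matches what you outline.

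Two small corrections are worth noting. First, the crossings $c(q^{\infty})$ and $c(q^{\infty;j})$ are \emph{not} active: by Proposition~\ref{prop:activityrule} they carry the highest label among crossings incident to every adjacent region, so none of their half-edges receive the letter $L$. Admissibility of bigon extensions at $c(q^{\infty})$ holds instead because one endpoint of the extended edge is an omitted vertex (both regions adjacent to $s$ are incident to $c(q^{\infty})$); for $c(q^{\infty;j})$ the paper observes that only the type-A extension is admissible, via the different-activity-letter criterion, and this restriction matches the ASI rule that points on $\bm{\gamma}^j_+$ lie to the left of the marked point $p_j^C$. Second, the shell-inside-shell case you anticipate does not occur: in the ASI procedure as defined, the attached points on a signed shell string $S_j$ are all of trivial type, so shells are nested at most one level deep.
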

\begin{proof}
Since we always assume the black region $R^B_s$ lies below the base shell string $S_0$, the associated Hopf link configuration $H^0_{s;l_H}$ is uniquely determined. Let $L_{s;l}$ be a link configuration constructed via the ASI procedure. To prove the sufficiency, it remains to show that $L_{s;l}$ can be obtained from $H^0_{s;l_H}$ by a sequence of admissible bigon extensions. For the necessity, we must show that if $L'_{s;l'}$ is obtained from $L_{s;l}$ by a single admissible bigon extension, then $L'_{s;l'}$ can also be constructed via the ASI procedure.\\
We divide the discussion into three cases according to the type of marked points attached to $S_0$ in the construction of $L_{s;l}$:
\begin{itemize}
	\item [\text{Case 1.}] All points on $\bm{\gamma}_0$ are of the trivial type.
	\item [\text{Case 2.}] All points on $\bm{\gamma}_0$ are either of the trivial type or the curl type.
	\item [\text{Case 3.}] There exist points of the shell type on $\bm{\gamma}_0$.
\end{itemize}
\textbf{Case 1.} We prove the sufficiency by induction on the number of attached points. The base case is given by the Hopf link configuration $H^0_{s;l_H}$ by assumption. Let the set of attached points (ignoring signs) be denoted by $[p_1, \cdots, p_n]$, and suppose the corresponding link configuration $L_{s;l}$ is admissible. Now, attach a new point $p'_1$ to the left of $p_1$, so that the new set of attached points becomes $[p'_1, p'_2,\cdots, p'_{n+1}]$, where each $p'_{j+1}$ corresponds to $p_j$. Let $L^1_{s;l^1}$ denote the link configuration constructed from this new set of points. Observe that the crossing point $c_1$, which corresponds to $q^0$, is active. Thus, bigon extensions performed at $c_1$ are always admissible. If the new point $p'_1$ has a positive (respectively, negative) sign, then $L^1_{s;l^1}$ coincides with the link configuration $L'_{s;l'}$ obtained from $L_{s;l}$ by performing an admissible bigon extension of type A (respectively, type B) at $c_1$. This completes the induction step.

To prove the necessity, we begin by examining the points on the arcs $\bm{\gamma}_+$ and $\bm{\gamma}_- $, and classify the possible types of admissible bigon extensions that can be performed at these positions. Let R be a region enclosed within the base shell string $S_0$. By our convention,  $R$ is a black region if it is adjacent to the upper arc $\bm{\gamma}_+$; otherwise, it is a white region. Suppose $R$ intersects the arc it is adjacent to at two points $q^j$ and $q^k$ with $j \textless k$. Then, among all crossing points incident to $R$, the crossing $c(q^j)$ receives the minimal label. Therefore, the half-edge $h_{(c(q^j), R)}$ is assigned the activity letter $L$, while all other half-edges incident to $v_R$ receive the activity letter $D$. It follows that both types of admissible bigon extensions can be performed at each of these crossing points.

The bigon extensions at $c(q^0)$ have already been discussed in the sufficiency part; the case for $c(q^{\infty})$ is analogous. Let $c_{i+1}$ be a crossing point of $L$ corresponding to a point $q^{i}$ on the upper arc $\bm{\gamma}_+$ (the case for $\bm{\gamma}_-$ is similar). Suppose $L'_{s;l'}$ is obtained from $L_{s;l}$ by a type-A admissible bigon extension at $c_{i+1}$. Then, to prove necessity, it remains to show that $L'_{s;l'}$ can be constructed via the ASI procedure. The new set of attached point is given by \[[p'_1, \cdots, +p'_{i},+p'_{i+1}, \cdots,p'_{n+1}],\] 
where $+p'_{i}$ and $+p'_{i+1}$ both arise from subdividing the original point $+p_{i}$; this situation is illustrated in subfigure (i) of Figure~\ref{fig:ASIextension}. Note that, from the preceding discussion, the label of the crossing point corresponding to $q'^{i}$ satisfies $l'\big(c'(q'^{i})\big)=i+1$. Thus, the link configuration $L'_{s;l'}$ can be constructed from these new attached points via the ASI procedure. The case where the admissible bigon extension at $c_i$ is of type B will be discussed below in the Case 2.

\textbf{Case 2.} We first address the sufficiency part. Without loss of generality, assume there are $n$ attached points on the arc $\bm{\gamma}_0$, and exactly one of them is a point $+p^C_j$ of curl type with a positive sign (the case for a negative sign is similar), while the remaining points are all of trivial type. Let $L^2_{s;l^2}$ be the link configuration constructed from these n attached points, treating them all as of trivial type on the arc $\bm{\gamma}_0$. Suppose the corresponding curl string $+C_j$ is a 1-curl. Then we aim to show that the link configuration $L_{s;l}$, which incorporates $+p^C_j$, can be obtained from $L^2_{s;l^2}$ by a type-B admissible bigon extension at $c^2_{j+1}$. Note that, by assumption $c^2_{j+1}$ corresponds to the point $q^j$; this situation is illustrated in subfigure (ii) of Figure~\ref{fig:ASIextension}. Let $R^2$ be the region in $L^2$ that is both incident to $c^2_{j+1}$ and adjacent to the lower arc $\bm{\gamma}_-$. From the preceding discussion in case 1, we have $\alpha \big(h_{(c^2_{j+1},R^2)}\big)=D$. Therefore, in $L_{s;l}$, the label $l\big(c(p^C_j)\big)$ is strictly smaller than $l\big(c(q^j)\big)$, as required. This completes the proof of the necessity part of case 1. Note that the crossing point $c(p^C_j)$ is indeed active. Moreover, by our convention, the regions enclosed by the curl string $+C_j$ are white regions. Therefore, by part (b) of Proposition~\ref{prop:actcross}, we can perform type-B admissible bigon extensions at the crossing point labeled $l\big(c(p^C_j)\big)$ iteratively in $L_{s;l}$, thereby obtaining a curl string $+C_j$ with an increasing number of components.

To prove the necessity, we begin by classifying the possible types of admissible bigon extensions that can be performed at each crossing point. By part (a) of Proposition~\ref{prop:bigonexpropcore}, it suffices to examine the crossing points arising from the curl strings $+C_j$. By part (c) of Proposition~\ref{prop:bigonexpropcore}, all such crossing points, except for $c(p^C_j)$, only admit admissible bigon extensions of type B. These extensions serve solely to increase the number of components within the curl string. The case where an admissible bigon extension of type A is performed at $c(p^C_j)$ will be addressed separately in Case 3.

\textbf{Case 3.} We first address the sufficiency part. Let $[\cdots, +p^C_j, \cdots]$ be a set of attached points on $\bm{\gamma}_0$ that contains no points of shell type, and let $[\cdots, +p^S_j, \cdots]$ be the set obtained by replacing the point $+p^C_j$ with a shell-type point $+p^S_j$. Suppose the curl string $+C_j$ attached to the first set is an $n$-curl, and the curl string $+C_j$ attached to the second set is an $(n-1)$-curl. Additionally, assume the shell string $+S_j$ contains no attached points.

Let $L^3_{s;l^3}$ and $L_{s;l}$ be the link configurations constructed from the two sets of points, respectively. Our goal is to show that the link configuration $L_{s;l}$ can be obtained from $L^3_{s;l^3}$ by a type-A admissible bigon extension at the crossing point $c^3(p^C_j)$. It is clear that the link universe $L$ coincides with the one obtained from $L^3$ by performing the specified bigon extension; this situation is illustrated in subfigure (iii) of Figure~\ref{fig:ASIextension}. Let $R^3$ denote the black region to the right of the curl string $+C'_j$. By assumption, $c^3(p'^C_j)$ receives the minimal label among all crossing points incident to $R^3$. Therefore, in $L_{s;l}$, we have $l\big(c(q^{0;j})\big) \textless l\big(c(q^{\infty;j})\big)$, as required. This completes the proof of the necessity part of case 2. For a shell string $+S_j$ with attached points, note that by part (b) of Proposition~\ref{prop:actcross}, the crossing point $c(q^{0;j})$ is indeed active. Hence, in $L_{s;l}$, we can perform admissible bigon extensions of both types at $c(q^{0;j})$ iteratively, thereby obtaining the required link configuration. This completes the proof of the sufficiency part of Case 3.

The necessity part proceeds in a manner analogous to that of Case 1, with the only difference being that the points $p_i$ and $q^i$ are replaced by $p^j_i$ and $q^{i;j}$, respectively. Note that in this case, each crossing point $c(q^{i;j})$ on the upper arc $\bm{\gamma}^j_+$ admits only an admissible bigon extension of type B, while each crossing on the lower arc admits only an admissible bigon extension of type A. Thus, it suffices to examine the possible types of admissible bigon extensions that can be performed at the crossing point corresponding to $q^{\infty;j}$. Let $R^W$ and $R^B$ be the regions in $L$ lying outside the shell of $+S_j$ and incident to $c(q^{\infty;j})$. Since the white region $R^W$ is also incident to $c(q^{0;j})$, it follows that the crossing point $c(q^{\infty;j})$ admits only admissible bigon extensions of type A. This aligns with the rule that all points on the upper arc $\bm{\gamma}^j+$ must lie to the left of the marked point $+p_j^C$. This completes the proof.
\end{proof}

\begin{remark}\label{rem:inihopf}
For a Hopf link configuration $H_{s; l_H}$ with a different choice of distinguished segment or a different ordered labeling, analogous constructions can be carried out. For instance, if we reverse the order of the two crossing points in $H$, we may instead choose the white region $R^W_s$ to be the one lying below the base shell string $S_0$. Similarly, if the distinguished segment $s$ is not adjacent to the unbounded planar region, we may use strings whose connecting edges lie inside the shells.
\end{remark}

\begin{figure}[htp]
	\centering
	\begin{minipage}[t]{0.5\textwidth}
		\centering	
		\includegraphics[width=1.2\textwidth, height=0.15\textheight]{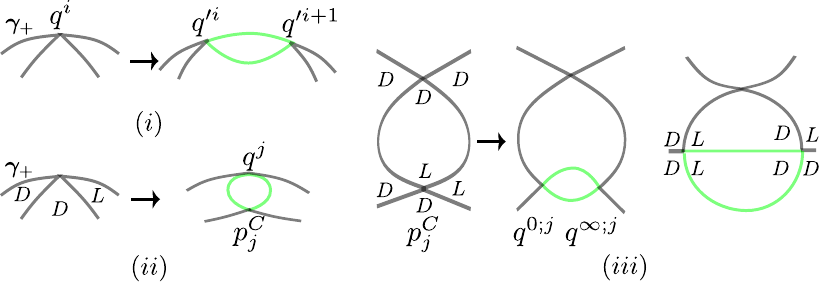}
		\caption{Illustrative examples of admissible bigon extensions arising in link configurations constructed via the ASI procedure.}
		\label{fig:ASIextension}
	\end{minipage}
	\hfill
	\begin{minipage}[t]{0.45\textwidth}
		\centering
		\includegraphics[width=0.62\textwidth, height=0.15\textheight]{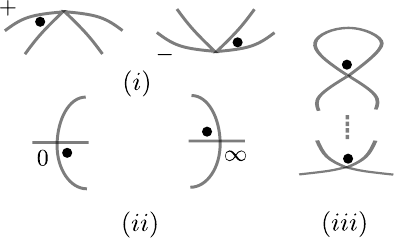}
		\caption{Markers corresponding to crossings in the minimal state.}
		\label{fig:statemini}
	\end{minipage}
\end{figure}

\begin{corollary}
Let $H_{s; l_H}$ be a fixed Hopf link configuration. If two admissible link configurations $L_{s; l}$ and $L_{s; l'}$ are obtained from $H_{s; l_H}$ and share the same link universe $L$ and distinguished segment $s$, then $l=l'$.
\end{corollary}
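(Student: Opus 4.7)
The plan is to invoke Theorem~\ref{thm:adlinktri} and proceed by induction on the number of crossings of $L$. By Theorem~\ref{thm:adlinktri}, both $L_{s;l}$ and $L_{s;l'}$ arise from the fixed configuration $H_{s;l_H}$ via the ASI procedure, so each can be built up from the Hopf link by a finite sequence of admissible bigon extensions. The base case $L = H$ is immediate, since by hypothesis $l = l_H = l'$.

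For the inductive step, I would identify a bigon $B \subset L$ that can be reduced (i) canonically, using only the topological data $(L, s)$ without reference to any labeling, and (ii) such that the reduction yields a smaller admissible link configuration in both cases. The proof of Theorem~\ref{thm:adlinktri} describes three ways a ``last-attached'' point in the ASI procedure appears as a bigon in $L$ (the configurations illustrated in Figure~\ref{fig:ASIextension}); in each case, the corresponding bigon is recognizable from $(L, s)$ alone. Picking any such $B$, the corollary preceding Theorem~\ref{thm:adlinktri} guarantees that reducing $B$ preserves the \emph{EI-property}, and the proof of Theorem~\ref{thm:adlinktri} ensures admissibility is preserved as well. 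Applying the inductive hypothesis to the two reductions $L''_{s;l''}$ and $L''_{s;l'''}$ of $L_{s;l}$ and $L_{s;l'}$ yields $l'' = l'''$ on $L''$.

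Finally, I would apply Theorem~\ref{thm:localcri}(b) to conclude that the two labels assigned to the two crossings of $B$ by the reverse bigon extension are uniquely determined by the activity letters of the two half-edges at the extended edge in the pre-extension configuration. Since the pre-extension configurations now coincide by the inductive hypothesis, their activity letters coincide, so the two new labels are forced. Combining this with the inductive hypothesis produces $l = l'$ on all of $L$.

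The main obstacle will be verifying the canonical existence of $B$: that is, showing $(L, s)$ alone determines a bigon whose reduction is simultaneously an admissible-preserving operation for both constructions. This is a topological rigidity statement about the ASI decomposition and essentially reduces to the case analysis already carried out in the proof of Theorem~\ref{thm:adlinktri}. The outermost curl at a trivial-type attached point on $\bm{\gamma}_0$, the terminal crossing of a signed curl string, and the outermost crossing of a signed shell string each correspond to identifiable topological features of $(L, s)$ independent of any labeling, so a canonical $B$ always exists when $L \neq H$. Once this rigidity is in place, the induction proceeds as described and the corollary follows.
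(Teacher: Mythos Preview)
Your inductive approach is correct in outline, but it is substantially more involved than the paper's proof, which is a single sentence: by Theorem~\ref{thm:adlinktri} (together with Remark~\ref{rem:inihopf} for the other initial Hopf configurations), both $L_{s;l}$ and $L_{s;l'}$ are produced by the ASI procedure, and the labeling rules stated at the end of that procedure are completely deterministic once the structural decomposition of $(L,s)$ into base shell, attached points, curl strings, and signed shell strings is fixed. Since that decomposition is visibly recoverable from the topology of $(L,s)$, the two labelings must agree.

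Your argument effectively reproves this rigidity of the ASI decomposition by peeling off one bigon at a time, using Theorem~\ref{thm:localcri}(b) as the local uniqueness ingredient. The ``main obstacle'' you flag---existence of a canonical reducible bigon determined by $(L,s)$ alone---is precisely this rigidity; and you must also verify that the chosen bigon has consecutively labeled crossings under \emph{both} $l$ and $l'$ before the reduction corollary applies, which again comes back to the ASI structure being topologically determined. So both proofs rest on the same fact, but the paper reads off the conclusion in one step from the deterministic labeling rules, whereas you unwind it crossing by crossing. Your route has the merit of making explicit that Theorem~\ref{thm:localcri}(b) is the underlying source of uniqueness, but otherwise gains nothing over the direct argument.
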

\begin{proof}
The conclusion follows immediately from Theorem~\ref{thm:adlinktri}, together with the observation in Remark~\ref{rem:inihopf}.
\end{proof}

\begin{corollary}\label{cor:actclass}
Let $L_{s; l}$ be an admissible link configuration constructed via the ASI procedure. Then a crossing point $c$ in $L$ is active if and only if it satisfies one of the following conditions:
\begin{itemize}
	\item [(i)] $c$ corresponds to the point $q^0$, which lies on the base shell string $S_0$.
	\item [(ii)] $c$ corresponds to the point $p^C_j$, which lies on a shell string $C_j$ attached to the arc $\bm{\gamma}_0$.
	\item [(iii)] $c$ corresponds to the point $q^{0;j}$, which lies on a shell string $S_j$ attached to the arc $\bm{\gamma}_0$.
\end{itemize}
\end{corollary}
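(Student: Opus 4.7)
The plan is to proceed by structural induction on the ASI procedure. The base case is the Hopf link configuration $H^0_{s;l_H}$, where one checks directly that $c(q^0)$ (labeled $1$) is active — its two non-omitted half-edges form a corner and both carry activity letter $L$ — while $c(q^\infty)$ (labeled $2$) is not, because $c(q^0)$ lies in the same two non-omitted regions with a strictly smaller label, forcing both remaining half-edges at $c(q^\infty)$ to carry $D$. This settles the base case, matching only item~(i).

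For the inductive step, I would use the fact (established inside the proof of Theorem~\ref{thm:adlinktri}) that each atomic ASI operation is a single admissible bigon extension $E_{c_i}$. By Proposition~\ref{prop:bigonexpropcore}(a), every half-edge incident to an old crossing $c_j$ with $j\neq i$ has the same activity letter before and after, so $c_j$ is active in $L$ iff $c'_{l'(j)}$ is active in $L'$; moreover, the ASI role of $c_j$ (whether it is $c(q^0)$, $c(p^C_j)$, $c(q^{0;j})$, or none of these) is preserved under the relabeling. It therefore remains to analyze only the two new cusps $c'_{i'}$ and $c'_{i''}$ produced by $E_{c_i}$.

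I would then go through the atomic ASI operations case by case, invoking Propositions~\ref{prop:bigonexpropcore}(b)--(c), \ref{prop:actcross}(b), and~\ref{prop:activityrule}. When $c_i$ is active — namely, an operation that adds the leftmost trivial point on $\gamma_0$ or $\gamma^j_0$, attaches a further curl component at $c(p^C_j)$, or promotes a curl to a shell — Proposition~\ref{prop:actcross}(b) shows that one of the new cusps inherits activity, and a direct computation of its regional minima via Proposition~\ref{prop:activityrule} identifies the inherited type as (i), (ii), or (iii) according to the case. When $c_i$ is not active (an interior trivial point of $\gamma_0$ or $\gamma^j_0$, a non-tip crossing of a curl, or $c(q^\infty)$, $c(q^{\infty;j})$), the goal is to show that neither new cusp becomes active.

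The main obstacle is this last case. The key is Proposition~\ref{prop:bigonexpropcore}(c), which forces $\alpha\big(h'_{(c'_{i''},R'_d)}\big)=D$ at the ``higher-labeled'' cusp, so it cannot collect a second $L$ from the new region $R'_C$; combined with Proposition~\ref{prop:bigonexpropcore}(b), the count of corner $L$-half-edges at each new cusp is bounded by the count at the non-active $c_i$ plus at most one new contribution from $R'_C$. A careful check against the corner-adjacency constraint in the definition of \emph{active} then rules out the accidental creation of exactly two $L$-half-edges in a corner at either cusp, closing the induction and yielding the claimed characterization.
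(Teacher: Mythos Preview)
Your overall strategy---structural induction along the sequence of admissible bigon extensions realizing the ASI procedure, using Proposition~\ref{prop:bigonexpropcore} to track activity letters and Proposition~\ref{prop:actcross} to propagate activity---is exactly the approach implicit in the paper's one-line proof, which simply cites Proposition~\ref{prop:actcross}(a) and the case-by-case discussion inside the proof of Theorem~\ref{thm:adlinktri}.

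However, your dichotomy ``$c_i$ active'' versus ``$c_i$ not active'' misclassifies one of the atomic ASI steps, and the resulting claim ``when $c_i$ is not active, neither new cusp becomes active'' is false. The creation of the \emph{first} curl component at a curl-type attached point $\pm p^C_j$ (Case~2 in the proof of Theorem~\ref{thm:adlinktri}) is carried out by a type-B (resp.\ type-A) bigon extension at the crossing $c(q^j)$, which in the preceding Case~1 configuration is \emph{not} active: by the very corollary you are proving, the only active crossing there is $c(q^0)=c_1$. Yet the lower-labeled new cusp produced by this extension is precisely $c(p^C_j)$, and the paper shows explicitly in Case~2 that $c(p^C_j)$ \emph{is} active. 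Concretely, $c(q^j)$ carries exactly one $L$-half-edge (toward the enclosed region on its right), and after the extension the lower cusp inherits that $L$ together with the new $L$ at $R'_C$; these two form a corner, so your ``careful check against the corner-adjacency constraint'' cannot rule this out.

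The fix is not to organize the induction around whether $c_i$ is active, but to follow the paper's Case~1/2/3 breakdown literally: in each case the proof of Theorem~\ref{thm:adlinktri} already names which new cusp (if any) becomes active and identifies it as one of $c(q^0)$, $c(p^C_j)$, or $c(q^{0;j})$, while Proposition~\ref{prop:actcross}(a) (equivalently Proposition~\ref{prop:bigonexpropcore}(a)) guarantees that all other crossings retain their status. That is all the corollary needs.
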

\begin{proof}
This follows immediately from part (a) of Proposition~\ref{prop:actcross} and the discussion in the proof of Theorem~\ref{thm:adlinktri} .
\end{proof}

We next show that the weight of the minimal Kauffman state is determined by certain specific classes of regions.

\begin{definition}\label{def:regions}
Let $L_{s; l}$ be an admissible link configuration constructed via the ASI procedure. A region $R$ is said to be an \textit{upper region}, denoted by $R^u$, if it either lies adjacent to the upper arc of some shell string or is enclosed by a positive curl string. Otherwise, it is called a \textit{lower region}, denoted by $R^l$. We use the notation $\vert R^x_i \vert$ to denote the number of regions in $L$ belonging to a specified class, where $x \in \{u,l\}$ indicates whether the region is upper or lower, and $i \in \{1,2,3\}$ indexes the type of region as follows:
\begin{itemize}
	\item [$i=1$:] regions enclosed by the base shell string $S_0$ and lying outside both $C_j$ and $S_j$.
	\item [$i=2$:] regions enclosed by a curl string $C_j$.
	\item [$i=3$:] regions enclosed by a shell string $S_j$.
\end{itemize} 
\end{definition}

\begin{proposition}\label{pro:regiontype}
Let $L_{s; l}$ be an admissible link configuration constructed via the ASI procedure, and let $R$ be a region of $L$. Then both the color of $R$, together with the activity letter of the half-edge corresponding to $R$ that lies in the minimal perfect matching $\Pcal_{\min}$, are almost entirely determined by the specific region class to which $R$ belongs, except for the case of $R^l_1$, as summarized in Table~\ref{tab:regionclass}.
\end{proposition}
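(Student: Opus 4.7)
The plan is to prove Proposition~\ref{pro:regiontype} by combining the inductive characterization of admissible link configurations via the ASI procedure (Theorem~\ref{thm:adlinktri}) with the explicit activity rule of Proposition~\ref{prop:activityrule}. For every region $R$ not adjacent to $s$, let $c(R)$ denote the crossing incident to $R$ that carries the marker of $R$ in the minimal Kauffman state $\Scal_{\min}$, as depicted in Figure~\ref{fig:statemini}. Through the bijection of Proposition~\ref{prop:statematchingcor}, determining the activity letter of the half-edge of $\Pcal_{\min}$ incident to the round vertex $v_R$ reduces to comparing $l(c(R))$ with the labels of the crossings incident to $R$.

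First, I would determine the color of each region class directly from the ASI conventions. By assumption the black region $R^B_s$ lies below the base shell string $S_0$, so regions adjacent to $\bm{\gamma}_+$ are black and regions adjacent to $\bm{\gamma}_-$ are white. Since a type-A admissible bigon extension introduces a new black region and a type-B extension introduces a new white region (cf.\ Section~4.1), tracing the ASI procedure yields the colors for each of the six classes in Definition~\ref{def:regions}. In particular, regions associated with the upper arcs of shell strings or with positive curl strings are black, while their mirror counterparts are white.

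Next, I would identify $c(R)$ explicitly for each non-exceptional class and apply Proposition~\ref{prop:activityrule}. For class $R^u_1$, the marker $c(R)$ is the crossing arising from the leftmost point on $\bm{\gamma}_+$ adjacent to $R$; the ASI labeling rule forces this crossing to be the lowest-labeled crossing incident to $R$, so the associated activity letter is $L$. For $R^u_3$, the same argument applied to the leftmost point on $\bm{\gamma}^j_+$ gives activity letter $L$. For classes $R^u_2$, $R^l_2$, and $R^l_3$, the region is contained entirely within a single curl string $C_j$ or shell string $S_j$, and the ASI labeling rules applied locally to $C_j$ or $S_j$ make it clear whether $c(R)$ attains the minimum label among crossings incident to $R$, giving the activity letters recorded in Table~\ref{tab:regionclass}. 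Each of these five cases is handled by a short local analysis of the crossings bordering the relevant arc.

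The main obstacle is the exceptional class $R^l_1$. A region of this class lies below the base shell and can simultaneously be incident to crossings arising from several attached strings whose labels interleave according to the global positions of the attached points on $\bm{\gamma}_0$. Depending on this pattern, the marker $c(R)$ of $\Scal_{\min}$ may or may not be the lowest-labeled crossing incident to $R$, so the activity letter of the corresponding half-edge in $\Pcal_{\min}$ is genuinely \emph{not} a function of the class alone; this is precisely why the statement sets $R^l_1$ aside. I would close the proof by consolidating the five determined cases into the entries of Table~\ref{tab:regionclass} and explicitly noting that the ambiguity in $R^l_1$ must be resolved contextually when the proposition is applied later in the paper.
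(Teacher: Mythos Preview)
Your approach—use Proposition~\ref{prop:activityrule} after first identifying the marker $c(R)$ of $\Scal_{\min}$ for each region—is exactly the paper's approach. However, the two explicit computations you carry out are incorrect, and the error propagates to the table you are trying to reproduce.

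For $R^u_1$ you assert that ``the marker $c(R)$ is the crossing arising from the leftmost point on $\bm{\gamma}_+$ adjacent to $R$'' and conclude that the activity letter is $L$. In fact, the marker assigned by $\Scal_{\min}$ to a region adjacent to $\bm{\gamma}_+$ is \emph{not} the lowest-labeled crossing incident to that region (see Figure~\ref{fig:statemini}); there is always another incident crossing with a smaller label, so the correct activity letter is $D$. The same mistake recurs for $R^u_3$, which also has activity letter $D$, not $L$. The underlying slip is that you conflated ``the crossing that receives the marker in $\Scal_{\min}$'' with ``the lowest-labeled crossing incident to $R$''; these are distinct, and the content of the proposition is precisely to compare them.

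Your color determination is also off for the curl and inner-shell classes. You claim that regions associated with positive curl strings and with the upper arcs of the shell strings $S_j$ are black, but Table~\ref{tab:regionclass} records $R^u_2$ and $R^u_3$ as white (and $R^l_2$, $R^l_3$ as black). The sign convention on an attached signed string reverses the roles of upper/lower arcs relative to the base shell, so the shortcut ``upper $\Rightarrow$ black'' valid for $S_0$ does not carry over. You need to trace the checkerboard coloring through the enclosed sums more carefully before invoking Proposition~\ref{prop:activityrule}.
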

\begin{proof}
The markers corresponding to different types of crossing points in the minimal state $\Scal_{\min}$ are illustrated in Figure~\ref{fig:statemini}. Since we always assume the black region $R^B_s$ lies below the base shell string $S_0$, it follows that the regions adjacent to $\bm{\gamma}_+$ are black, while those adjacent to $\bm{\gamma}_-$ are white. The corresponding markers in $\Scal_{\min}$ for such regions are shown in subfigures (i) and (ii) of Figure~\ref{fig:statemini}. According to the rule governing the ordered labeling, if $(c,R^u_1) \in \Scal_{\min}$, there exists another crossing point $c'$ incident to $R^u_1$ with a lower label than $c$. Thus, we have $\alpha\big(h_{(c,R^u_1)}\big)=D$. Now, consider the region $R^l_1$. Suppose $(c,R^l_1) \in \Scal_{\min}$. if the crossing point $c$ corresponds either to a negative trivial-type attached point $-p_j$ or to the distinguished crossing point $c_1$, then $\alpha\big(h_{(c,R^l_1)}\big)=L$; otherwise, $\alpha\big(h_{(c,R^l_1)}\big)=D$. 

Next, we consider the regions enclosed by curl strings. It is clear that the upper regions enclosed by curl strings are white, while the lower regions enclosed by curl strings are black. The corresponding markers in $\Scal_{\min}$ for such regions are shown in subfigure (iii) of Figure~\ref{fig:statemini}. From this, we conclude that if $(c,R_2) \in \Scal_{\min}$, then the associated activity letter satisfies $\alpha\big(h_{(c,R_2)}\big)=L$. 

Furthermore, for regions enclosed by a shell string $S_j$, the activity letter is determined in the same manner as for those in class $R_1$. In particular, we observe that the region classes $R^u_3$ and $R^u_2$ share the same color, as do $R^l_3$ and $R^l_2$. This completes the proof.
\end{proof}

\begin{table}
\caption{Region colors and activity letters in the minimal state.}
\begin{tabularx}{\textwidth}{l *{6}{>{\centering\arraybackslash}X}}
	\toprule
	Region Class & $R^u_1$ & $R^l_1$ & $R^u_2$ & $R^l_2$ & $R^u_3$ & $R^l_3$ \\
	\midrule
	Region Color & B & W & W & B & W & B \\
	Activity Letter  & D & L or D & L & L & D & L \\
	\bottomrule
\end{tabularx}
\label{tab:regionclass}
\end{table}

We next show that certain classical link universes admit the structure of an admissible link configuration.

\begin{definition}\label{def:link}
We introduce several families of link universes that arise from classical link diagrams as follows:
\begin{itemize}
	\item [(a)] Let $(a_1,\ldots,a_n)$ be a sequence of positive integeres. The \textit{2-bridge link universe} $L(a_1,\cdots,a_n)$ refers to the link universe obtained from a standard 2-bridge diagram specified by the continued fraction data $(a_1,\cdots,a_n)$, as illustrated in subfigure (a) of Figure~\ref{fig:Links}. 
	\item [(b)] Let $(b_1,\ldots,b_n)$ be a sequence of positive integers. The \textit{pretzel link universe} $L(b_1,\ldots,b_n)$ is defined as the link universe corresponding to the classical pretzel diagram with twisting parameters $(b_1,\ldots,b_n)$, as illustrated in subfigure (b) of Figure~\ref{fig:Links}.
	\item [(c)] The \textit{Montesinos link universe} $L(k;R^1,\cdots,R^m)$ is a link universe of the form illustrated in subfigure (c) of Figure~\ref{fig:Links}, where k is a positive integer, and each $R^j(c^j_1,\ldots, c^j_n)$ denotes a rational tangle (disregarding over/under crossing information) parameterized by the sequence of integers $(c^j_1,\ldots, c^j_n)$, as depicted in the figure.
\end{itemize}
\end{definition}

\begin{proposition}
Let a link universe $L$ belong to one of the families introduced in Definition~\ref{def:link}, and let the lower segment be chosen as the distinguished segment $s$. Then there exists an ordered labeling $l$ of the crossing points of $L$, such that the associated pair $L_{s;l}$ forms an admissible link configuration. More precisely, for each family of link universes, the ordered labeling is determined as follows:
\begin{itemize}
	\item [(a)] Let $L$ be a 2-bridge link universe. Then the crossing points in the i-th twist box receive lower labels than those in the j-th box whenever $i \textless j$. Within each twist box, the crossing points are labeled in increasing order from left to right.
	\item [(b)] Let $L$ be a pretzel link universe. Then the crossing points in the i-th twist box receive lower labels than those in the j-th box whenever $i \textless j$. In the first twist box, the crossing points are labeled in increasing order from bottom to top, while in each subsequent twist box, the crossing points are labeled in increasing order from top to bottom.  
	\item [(c)] Let $L$ be a Montesinos link universe. Then the unique twist box receives the first k labels. The crossing points in the i-th rational tangle circle receive lower labels than those in the j-th circle whenever $i \textless j$. Within each circle, the crossing points in the i-th twist box receive lower labels than those in the j-th box whenever $i \textless j$. Moreover, in each horizontal twist box, the crossing points are labeled in increasing order from left to right, and in each vertical twist box, they are labeled in increasing order from top to bottom.
\end{itemize}
\end{proposition}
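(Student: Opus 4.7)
The plan is to invoke Theorem~\ref{thm:adlinktri}, which asserts that a link configuration is admissible precisely when it arises from some ASI procedure. Accordingly, for each of the three families, my task reduces to two steps: first, exhibit an explicit ASI construction whose output is the given link universe with the lower segment as $s$; second, verify that the ordered labeling forced by the ASI rules matches the labeling described in the statement. Admissibility will then be automatic. As a preliminary check, one observes that in all three families the lower segment is adjacent to the unbounded region, so it can indeed play the role of the connecting edge of the base shell $S_0$ in the ASI construction.

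For 2-bridge links $L(a_1,\ldots,a_n)$, I would proceed by recursion on $n$. The outermost twist box of size $a_1$ comes from the base shell $S_0$: its two extremal crossings play the role of $c(q^0)$ and $c(q^{\infty})$, and the remaining $a_1-2$ crossings are produced by trivial-type points attached to $\gamma_0$ with signs alternating according to the handedness of the twist. The sub-diagram corresponding to $(a_2,\ldots,a_n)$ is then placed inside a shell string $S_2$ attached at a shell-type point on $\gamma_0$, and the same construction is applied recursively to $\gamma^2_0$. The labeling rule $l(c(q^0))=1$, together with the rule that attached points on $\gamma_0$ are labeled in left-to-right order and shell strings receive contiguous label blocks bracketed by $c(q^{0;j})$ and $c(q^{\infty;j})$, reproduces exactly the prescribed twist-box-by-twist-box, left-to-right labeling.

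For pretzel links $L(b_1,\ldots,b_n)$, the first twist box is realized from $S_0$ together with $b_1-2$ trivial points attached to $\gamma_0$, with signs chosen so that the interior crossings align along the bottom arc $\gamma_-$; this produces the bottom-to-top labeling within the first box. Each subsequent twist box is realized by attaching a signed curl string $C_j$ with $b_j-1$ components at a curl-type point $p^C_j \in \gamma_0$; this produces a column of $b_j$ crossings that exactly models the $j$-th pretzel twist box. By Corollary~\ref{cor:actclass} the crossing $c(p^C_j)$ is active, and the ASI labeling rule assigns it the smallest label of the block, corresponding to the top crossing of the twist box; the remaining crossings receive labels increasing toward $c(q^j)$, matching the prescribed top-to-bottom order. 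Montesinos links $L(k;R^1,\ldots,R^m)$ are handled by combining both mechanisms: the shared twist box of size $k$ is absorbed into $S_0$ via $k-2$ trivial points on $\gamma_0$ (occupying the first $k$ labels), and each rational tangle $R^j$ is placed inside a shell string $S_j$ attached at a shell-type point on $\gamma_0$; the construction inside $S_j$ then proceeds exactly as in the 2-bridge case, since a rational tangle is itself built from a sequence of twist boxes.

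The principal obstacle is bookkeeping rather than conceptual: one must match the sign conventions of the attached points, the orientation of curl strings, and the positions of $q^0, q^{\infty}$ and $p^C_j, q^j$ to the geometric conventions of the classical diagrams, so that the ASI-induced labeling is precisely the one stated (bottom-to-top versus top-to-bottom, left-to-right, etc.). For Montesinos links the horizontal-versus-vertical distinction of twist boxes inside a rational tangle will need particular care, since it depends on whether the recursive call to the 2-bridge construction is performed inside $S_j$ using upper or lower arc trivial points. Once these conventions are pinned down case by case, each family's construction falls out of the ASI procedure described in Section~4.2, and admissibility follows immediately from Theorem~\ref{thm:adlinktri}.
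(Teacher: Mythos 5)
Your overall strategy (exhibit an explicit construction and invoke Theorem~\ref{thm:adlinktri}) is the right one, and your treatment of pretzel links is essentially the paper's: first box from trivial-type points on $\bm{\gamma}_0$, each subsequent box from a signed curl-type point carrying a curl string with the appropriate number of components, with the labeling forced by the ASI rules. But there is a genuine gap in parts (a) and (c): your recursive construction is not an instance of the ASI procedure. By Definition~\ref{def:points}(b), a shell string $S_j$ attached to $\bm{\gamma}_0$ may only carry \emph{trivial-type} points on $\bm{\gamma}^j_0$ (plus the single marked point $\pm p_j^C$, which may carry a curl); shell-type points cannot be attached inside a shell string, so shells cannot be nested. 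Your 2-bridge recursion, which places the sub-diagram $(a_2,\dots,a_n)$ inside a shell string and then recurses, therefore leaves the ASI framework as soon as $n\geq 3$, and Theorem~\ref{thm:adlinktri} no longer applies to certify admissibility. The same problem propagates to your Montesinos argument, where you build each rational tangle inside $S_j$ ``exactly as in the 2-bridge case.'' (There are also smaller inaccuracies: in the paper's construction $c(q^0)$ and $c(q^\infty)$ lie in the \emph{first} and \emph{last} twist boxes respectively, not both in the outermost one, and the first box of size $a_1+1$ uses $a_1$ trivial points, not $a_1-2$; with your count the box sizes and the claim that the shared Montesinos box receives the first $k$ labels do not come out right, since $c(q^\infty)$ is forced to carry the largest label.)

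The fix, and the route the paper takes, avoids nesting entirely. For 2-bridge links no shell strings are needed at all: $L(a_1+1,a_2,\dots,a_n+1)$ is obtained from the base shell $S_0$ by attaching only trivial-type points, $a_1$ of them negative, then $a_2$ positive, and so on, the sign pattern encoding the alternation of twist boxes; the ASI labeling then visibly matches the prescribed box-by-box, left-to-right order. For Montesinos links the paper does not give a direct ASI construction at all; instead it starts from the pretzel configuration $L(1,\dots,1,b_{k+1},\dots,b_{k+m})$ (already known admissible) and grows each rational tangle by a sequence of admissible bigon extensions, using type-B extensions at the top crossings of the last $m$ boxes and then Proposition~\ref{prop:actcross} to perform extensions of both types iteratively at the newly created active crossings (cf.\ Figure~\ref{fig:extensionM}); admissibility is preserved at each step by definition, so no appeal to a nested ASI construction is needed. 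If you want to keep a purely ASI-based argument for Montesinos links, you would have to show that an arbitrary rational tangle can be realized inside a single shell string using only trivial-type points on $\bm{\gamma}^j_0$ together with the one permitted curl at $\pm p_j^C$; as written, your proposal defers exactly this point to the flawed recursive 2-bridge scheme, so the claim that ``admissibility will then be automatic'' does not go through.
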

\begin{proof} To prove that these link configurations are indeed admissible, we will show that the first two families can be constructed via the ASI procedure, and that the third family can be obtained from the second by a sequence of admissible bigon extensions.\\
(a) Let $L(a_1+1, a_2,\ldots, a_{n-1}, a_n+1)$ be a 2-bridge link universe, and let $L_{s;l}$ be the associated link configuration constructed according to the prescribed labeling. Then it is clear that $L_{s;l}$ can be constructed via the ASI procedure, with all attached points being of trivial type. More precisely, the sequence of attached points is given by
\[\Big[\underbrace{-p_1,\ldots,-p_{a_1}}_{a_1 \text{points}},\underbrace{+p_{a_1+1},\ldots,+p_{a_1+a_2}}_{a_2 \text{points}},\ldots\Big].\]\\
(b) Let $L(b_1+1, b_2,\ldots, b_{n-1}, b_n+1)$ be a pretzel link universe, and let $L_{s;l}$ be the associated link configuration constructed according to the prescribed labeling. Then $L_{s;l}$ can be constructed via the ASI procedure, and the sequence of attached points is given by
\[\Big[\underbrace{+p_1,\ldots,+p_{b_1}}_{b_1 \text{ points}},\underbrace{-p^C_{b_1+1},\ldots,-p^C_{b_1+n-2}}_{n-2 \text{ points}}\underbrace{+p_{b_1+n-1},\ldots}_{b_n \text{ points}}\Big],\]
where each point $-p^C_{b_1+i}$ is aligned with a curl string consisting of $b_{i+1}-1$ components.\\
(c) We show the Montesinos link universe $L(k;R^1,\ldots,R^m)$ can be obtained from the pretzel link universe 
\[L(\underbrace{1,\ldots,1}_{k \text{ points}},b_{k+1},\ldots,b_{k+m}),\] 
by a sequence of admissible bigon extensions, where  $b_{k+i}=c^i_n+1$ and $(c^i_1,\ldots, c^i_n)$ is the integer array parameterizing the rational tangle $R^i$. Perform bigon extensions of type B at all the top crossing points in the last $m$ twist boxes; see Figure~\ref{fig:extensionM} for reference. Since these crossing points are either active or lie on the shell of the base shell string $S_0$, each of these bigon extensions is admissible.  Note that after the extension, each new crossing point $c$ on the left side receives a lower label than its right counterpart. Therefore, these crossing points are all active. As a result, we can iteratively perform admissible bigon extensions of both types at each such point $c$ to construct the desired rational tangle, as illustrated in Figure~\ref{fig:extensionM}. This completes the proof.
\end{proof}

\begin{figure}[h]
	\centering
	\begin{minipage}[t]{0.55\textwidth}
		\centering	
		\includegraphics[width=1.2\textwidth, height=0.19\textheight]{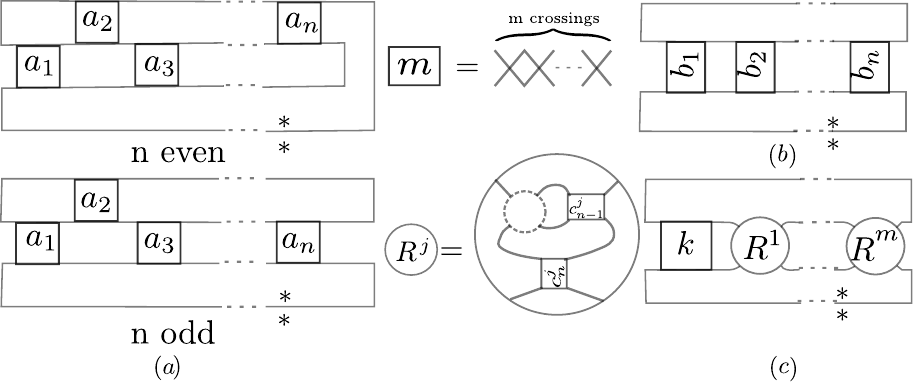}
		\caption{(a) Two-bridge link universes. (b) Pretzel link universes. (c) Montesinos link universes.}
		\label{fig:Links}
	\end{minipage}
	\hfill
	\begin{minipage}[t]{0.44\textwidth}
		\raggedleft
		\includegraphics[width=0.6\textwidth, height=0.19\textheight]{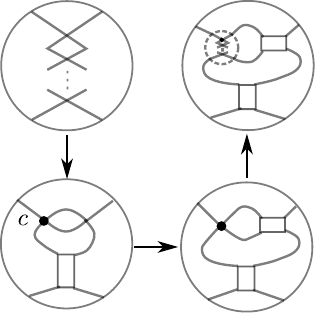}
		\caption{Rational tangles constructed through sequences of admissible bigon extensions.}
		\label{fig:extensionM}
	\end{minipage}
\end{figure}

\section{BIGON REDUCTION AND CLUSTER MUTATION}

In this section, we begin by recalling some basic notation and key results from cluster theory. We then proceed to the proof of the second main theorem. To each \textit{reduction sequence} of an admissible link configuration $L_{s;l}$, we associate a corresponding mutation sequence $\mu_{rd}$ in the cluster algebra. We characterize the cluster variable arising from $\mu_{rd}$, which plays a central role in the proof. By combining this with structural information from $L_{s;l}$, we complete the proof of the theorem. Finally, we illustrate the result with several applications and examples.

\subsection{Background on cluster algebras}

Let N be a positive integer, and let $\operatorname{Trop}(u_1,u_2, \ldots, u_N)$ denote the abelian group freely generated by the elements $u_1,u_2, \ldots, u_N$. The multiplication $\cdot$ in $\operatorname{Trop}(u_1,u_2, \ldots, u_N)$ is defined as the usual multiplication of polynomials and the addition $\oplus$ in $\operatorname{Trop}(u_1,u_2, \ldots, u_N)$ is defined by 
\[\prod^N_{j=1} u_j^{a_j} \oplus \prod^N_{j=1} u_j^{b_j} =
\prod^N_{j=1} u_j^{\min (a_j, b_j)} .\] 
We refer to the structure $(\operatorname{Trop}(u_1,u_2, \ldots, u_N),\oplus,\cdot)$ as a \textit{tropical semifield}. In particular, the tropical semifield $\operatorname{Trop}(y_1, \ldots, y_N)$ is denoted by $\mathbb{P}$, and we let $\mathbb{ZP}$ denote the group ring of $\mathbb{P}$.
The cluster algebra is determined by the choice of an initial seed $\Sigma_0=(\xbf, \ybf ,Q)$, which consists of the following data:
\begin{itemize}
	\item [\textbf{--}] $Q$ is a finite quiver (i.e., a directed graph) with $N$ vertices, containing no loops or 2-cycles;
	\item [\textbf{--}] $\ybf =(y_1,\ldots,y_N)$ is an $N$-tuple called the \textit{initial coeffcient tuple}.
	\item [\textbf{--}] $\xbf =(x_1,\ldots,x_N)$ is an $N$-tuple called the \textit{initial cluster}.
\end{itemize}

The \textit{seed mutation} $\mu_k$ in direction~$k$ transforms
$(\xbf, \ybf, Q)$ into a new seed
$\mu_k(\xbf, \ybf, Q)=(\xbf', \ybf', Q')$, is defined as follows:

\begin{itemize}
	\item The quiver $Q'$ is obtained from $Q$ by performing the following steps: 
	\begin{enumerate}
		\item For every path $i \rightarrow k \rightarrow j$, add one arrow $i \rightarrow j$,
		\item Reverse all arrows incident to $k$,
		\item Remove all resulting 2-cycles.
	\end{enumerate} 
	\item $\ybf'=(y'_1,\ldots,y'_N)$ is a new coefficient $N$-tuple, given by:
	\[y'_j =
	\begin{cases}
		y_k^{-1}, & \text{if $j = k$};\\[.05in]
		y_j \prod_{k\rightarrow j} y_k (y_k\oplus 1)^{-1} \prod_{k\leftarrow j} (y_k\oplus 1),
		& \text{if $j \neq k$},
	\end{cases}\]
	where the first product is taken over all arrows in $Q$ with source $k$, and the second product is taken over all arrows with target $k$.
	\item $\xbf'=(x'_1,\ldots,x'_N)$ is a new cluster $N$-tuple, given by:
	\[x'_j =
	\begin{cases}
     \frac{ y_k\prod_{i\rightarrow k} x_i \ 
     		+ \ \prod_{i \leftarrow k} x_i}{(y_k\oplus 1)x_k}, & \text{if $j = k$};\\[.05in]
	x_k, & \text{if $j \neq k$}.
	\end{cases}\]
\end{itemize}

\begin{definition}
Any variable $x$ that appears in a cluster $\xbf_t$ of a seed $\Sigma_t =(\xbf_t, \ybf_t, Q_t)$ is called a \textit{cluster variable}, where the label $t$ corresponds to a vertex of an $N$-regular tree whose edges represent mutations between seeds. The cluster $N$-tuples and the coefficient $N$-tuples at vertex $t$ are denoted by 
\[\xbf_t=(x_{1;t},\ldots, x_{N;t}), \qquad \ybf_t=(y_{1;t},\ldots, y_{N;t}).\]
The \textit{cluster algebra} $\mathcal{A}=\mathcal{A}(\Sigma_0)$ associated to the initial seed $\Sigma_0$ is defined as 
\[\mathcal{A}=\mathbb{ZP}[x \mid \text{ $x$ is a cluster variable}].\]
\end{definition}

\begin{theorem}\label{thm:compatible}
Let $x_{i;t}$ be a cluster variable, and let $\Sigma_0 =(\xbf_0, \ybf_0, Q_0)$ be the initial seed. Then
\begin{itemize}
	\item [(a)\cite{fomin2002cluster}] The cluster variable can be written as a Laurent polynomial in the initial cluster variables $\xbf_0=(x_1, \dots, x_N)$:
	\begin{equation}\label{eq:Laurent}
		x_{i;t}= \frac{f(x_1, \dots, x_N)}{x_1^{d_1} \cdots x_N^{d_N}},
	\end{equation}
where f is a polynomial that is not divisible by any $x_i$. The exponent vector $(d_1, \dots, d_N)$ of the denominator is called the \textbf{denominator vector} of the cluster variable $x_{i;t}$ with respect to the initial cluster $\xbf_0$.
    \item [(b)\cite{cao2020enough}] Let $\dbf=(d_1, \dots, d_N)$ be the denominator vector of $x_{i;t}$ with respect to the initial cluster $\xbf_0$. Then we have $d_j=0$ if and only if $x_{i;t}$ and $x_j$ both appear in some cluster $\xbf_{t'}$.
\end{itemize}
	\end{theorem}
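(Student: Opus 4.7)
The statement collects two classical facts that the paper is invoking rather than reproving; accordingly, my proposal is to outline the standard strategies behind each cited source, since the full arguments are lengthy and well-documented elsewhere.

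For part (a) (the Laurent phenomenon), the plan is to induct on the distance in the $N$-regular mutation tree from the initial vertex $t_0$. The base case is trivial since $x_{i;t_0} = x_i$. For the inductive step, consider a seed $\Sigma_t$ adjacent to $\Sigma_{t'}$ via mutation $\mu_k$. The exchange relation expresses $x'_k$ as a Laurent polynomial in the cluster $\xbf_t$, so the issue is to control what happens after many mutations away from $\xbf_0$. The standard device is the \emph{caterpillar lemma} of Fomin--Zelevinsky: one considers paths of length three $\Sigma_{t_1} \!-\! \Sigma_{t_2} \!-\! \Sigma_{t_3} \!-\! \Sigma_{t_4}$ in the exchange graph with appropriate mutation directions, checks by a direct computation that the resulting rational expressions are Laurent in $\xbf_0$ (using the initial exchange relation only to clear denominators), and then shows that the ring of Laurent polynomials on any two mutation-adjacent clusters coincides. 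Repeated application propagates the Laurent property throughout the tree. The non-divisibility of the numerator by any $x_i$ is built into the definition of the denominator vector by choosing the unique reduced form.

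For part (b), the aim is to prove the biconditional $d_j=0 \Leftrightarrow x_{i;t}$ and $x_j$ both lie in a common cluster. The easy direction is $(\Leftarrow)$: if both variables appear in some $\xbf_{t'}$, write $x_{i;t}$ as a Laurent polynomial in $\xbf_{t'}$ (applying part (a) to the seed $\Sigma_{t'}$), and then apply sequences of mutations to return to $\xbf_0$. Since none of these mutations alter $x_j$ and the Laurent expression has no $x_j$ in the denominator initially, induction on the mutation distance from $t'$ to $t_0$ shows the denominator never acquires a factor of $x_j$, giving $d_j = 0$. The hard direction $(\Rightarrow)$ is the main obstacle: one must produce, from the mere vanishing of one coordinate of the denominator vector, an actual cluster containing both variables. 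This is precisely the content of Cao--Li's \emph{enough $g$-pairs} property, which compares denominator vectors and $g$-vectors using sign-coherence of $c$-vectors (Derksen--Weyman--Zelevinsky, Gross--Hacking--Keel--Kontsevich). The strategy is to pass through $g$-vectors: one first shows that denominator vectors with a zero in the $j$-th coordinate correspond to cluster monomials compatible with $x_j$ (in the $g$-vector fan), and then uses the bijection between cluster monomials and points of the cluster complex to exhibit the required common cluster.

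The main obstacle, as indicated, is the harder direction of (b), which relies on substantial machinery (sign-coherence, $g$-vector fans, and the enough $g$-pairs property). In the present paper this theorem is used as a black box to transfer compatibility information between F-polynomial specializations and cluster variables, so I would not attempt to reprove it from scratch; instead I would cite \cite{fomin2002cluster} for (a) and \cite{cao2020enough} for (b) and refer the reader to those sources for the full arguments.
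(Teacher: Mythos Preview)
Your proposal is correct and matches the paper's treatment: Theorem~\ref{thm:compatible} is stated in the paper purely as a citation of \cite{fomin2002cluster} and \cite{cao2020enough}, with no proof given. Your decision to cite these sources and defer to them for the full arguments is exactly what the paper does; the proof sketches you provide are more than the paper itself offers.
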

	
\begin{definition}
Let $x_{i;t}$ be a cluster variable, the \textbf{F-polynomial} $F_{i;t}$ of $x_{i;t}$ is defined as the specializing of $x_{i;t}$ obtained by setting all initial cluster variables $x_j=1$ in its Laurent expansion.
\end{definition}	

\begin{theorem}\cite[Theorem 1.7]{derksen2010quivers}
Each F-polynomial $F_{i;t}$ has constant term 1, and contains a unique monomial of maximal degree with coefficient 1 that is divisible by all other monomials appearing in $F_{i;t}$. In particular, there exists a unique monomial $x_1^{g_1} \cdots x_N^{g_N}$ in the Laurent expansion of $x_{i;t}$ that corresponds to the constant term 1 of $F_{i;t}$. The exponent vector $(g_1, \dots, g_N)$ of this polynomial is called the \textbf{g-vector} of the cluster variable $x_{i;t}$ and is denoted by $\gbf_{i;t}$.
\end{theorem}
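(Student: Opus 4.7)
The plan is to prove both claims by induction on the graph distance $d(t,t_0)$ from the vertex $t$ to the initial vertex $t_0$ in the $N$-regular tree. The base case $d=0$ is immediate: $F_{i;t_0}=1$ has constant term $1$, and its unique monomial (also $1$) is trivially the maximal one; the corresponding $g$-vector is the standard basis vector $e_i$, matching the Laurent monomial $x_i$ in $x_{i;t_0}=x_i$.

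For the inductive step, I would first derive the standard mutation recursion for $F$-polynomials. Substituting the Laurent form $x_{i;t}=F_{i;t}(\hat{\ybf})\cdot \xbf^{\gbf_{i;t}}$, where $\hat y_j = y_j\prod_k x_k^{b_{jk}}$ is built from the initial exchange matrix, into the exchange relation and isolating the factor that is polynomial in the $\hat y_j$ yields the standard recursion
\[
F_{k;t'}\cdot F_{k;t} \;=\; \hat y_k \prod_{i\to k} F_{i;t}^{[b_{ik}]_+} \;+\; \prod_{i\leftarrow k} F_{i;t}^{[b_{ki}]_+}.
\]
Setting all $y_j=0$ (which forces $\hat y_k=0$ on the right-hand side) and applying induction to the remaining factor gives $F_{k;t'}(\mathbf{0})\cdot F_{k;t}(\mathbf{0})=1$; since both factors are values of integer-coefficient polynomials at the origin, each equals $1$, establishing the constant-term claim.

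For the unique maximal monomial, I would run the tropical companion of this argument. Pass to the tropical semifield $\operatorname{Trop}(y_1,\ldots,y_N)$ and extract, for each $F_{k;t}$, the evaluation picking out its top monomial under divisibility. The same recursion, interpreted tropically, propagates both existence and uniqueness of a top monomial with coefficient $1$ from $t_0$ outwards; no cancellation can destroy the maximal term because it multiplies a unique top from each factor on the right. A cleaner alternative is to invoke the positivity theorem of \cite{derksen2010quivers}, so that the coefficients of $F_{k;t}$ are non-negative integers and the top monomial is the unique maximal vertex of the Newton polytope, which is tracked through mutations by the tropical recursion without cancellation.

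The identification of the g-vector follows from the separation formula $x_{i;t}=F_{i;t}(\hat{\ybf})\cdot\xbf^{\gbf_{i;t}}$: each monomial $\ybf^{\mathbf{c}}$ of $F_{i;t}$ contributes the Laurent monomial $\xbf^{\gbf_{i;t}+B\mathbf{c}}$ (via $\hat y_j = y_j\prod_k x_k^{b_{jk}}$), so the constant term $1$ of $F_{i;t}$ matches exactly the Laurent monomial $\xbf^{\gbf_{i;t}}$, which is unique because the constant term itself is unique. The main obstacle is the uniqueness of the maximal monomial: the polynomial recursion alone does not preclude cancellations creating ties between top monomials, and the cleanest resolution either carries the tropical statement through the induction in parallel or invokes the Derksen–Weyman–Zelevinsky positivity result for $F$-polynomials.
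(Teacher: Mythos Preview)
The paper does not prove this theorem; it is quoted from \cite{derksen2010quivers} as background in Section~5.1 and is used without proof. There is therefore no proof in the paper to compare your proposal against.

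As a side remark on your sketch itself: the inductive strategy you outline is essentially the reason these statements were \emph{conjectures} in Fomin--Zelevinsky~IV rather than theorems. The recursion $F_{k;t'}\cdot F_{k;t}=\hat y_k\prod F_{i;t}^{[b_{ik}]_+}+\prod F_{i;t}^{[b_{ki}]_+}$ does give the constant-term claim cleanly, but the maximal-monomial claim does not propagate through the recursion without an additional input, because the division by $F_{k;t}$ can in principle destroy the uniqueness of a top monomial. Your fallback of invoking ``the positivity theorem of \cite{derksen2010quivers}'' is not available: Derksen--Weyman--Zelevinsky do \emph{not} prove positivity of $F$-polynomial coefficients in that paper. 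Their actual proof of Theorem~1.7 goes through decorated representations of quivers with potentials, identifying $F_{i;t}$ with a generating function of Euler characteristics of quiver Grassmannians and then using properties of the $E$-invariant under mutation. That machinery is what resolves the cancellation issue you flag at the end; a purely combinatorial induction of the type you sketch is not known to suffice.
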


\begin{theorem}\cite[Corollary 6.3]{fomin2007cluster}
Let $x_{i;t}$ be a cluster variable, and let $F_{i;t}$ and $\gbf_{i;t}=(g_1,\cdots,g_N)$ be its F-polynomial and g-vector, respectively. Then $x_{i;t}$ can be written as
	\begin{equation}\label{eq:clusterFgformula}
		x_{i;t}= \frac{F_{i;t}(\hat{y}_1,\ldots,\hat{y}_N)}
		{F_{i;t}(y_1,\ldots,y_N)|_{\mathbb{P}}}x_1^{g_1} \ldots x_N^{g_N},
	\end{equation}
where the exchange ratio $\hat{y}_j$ is given by
	\begin{equation}
		\hat{y}_j=y_j \,\frac{\prod_{k \rightarrow j }\, x_{k}}{ \prod_{k \leftarrow j }\, x_{k}}.
	\end{equation}
\end{theorem}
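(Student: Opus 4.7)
The plan is to prove the separation formula by induction on the distance from $\Sigma_t$ to the initial seed $\Sigma_0$ in the $N$-regular exchange tree. The base case $t = t_0$ is immediate, since $x_{i;t_0} = x_i$, $F_{i;t_0} = 1$, and $\gbf_{i;t_0} = e_i$ make both sides equal to $x_i$. For the inductive step, I would fix a seed $\Sigma_t$ at which the formula is known to hold and set $\Sigma_{t'} = \mu_k(\Sigma_t)$. Only the cluster variable $x_{k;t'}$ is new, and it is determined by the exchange relation
\[
x_{k;t'} \cdot x_{k;t} \;=\; \frac{y_{k;t}}{y_{k;t}\oplus 1}\prod_{i \to k} x_{i;t} \;+\; \frac{1}{y_{k;t}\oplus 1}\prod_{i \leftarrow k} x_{i;t}.
\]
Substituting the inductive expression for each $x_{i;t}$ into the right-hand side and simplifying should reproduce $F_{k;t'}(\hat y')\, \xbf^{\gbf_{k;t'}} / F_{k;t'}(y')|_{\mathbb{P}}$, which is what the formula predicts at $\Sigma_{t'}$.

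This reduction rests on three subsidiary identities, each established first by its own short induction along the exchange tree: the mutation formula for F-polynomials, the piecewise-linear mutation rule for g-vectors, and the fact that the exchange ratios $\hat y_j$ satisfy the $Y$-variable (coefficient) mutation rule under cluster mutation. The last is a direct computation from the definition $\hat y_j = y_j \prod_{k\to j} x_k / \prod_{k \leftarrow j} x_k$ combined with the exchange relation; the first two constitute the technical heart of the argument.

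A cleaner implementation, which I would actually carry out, is to first verify the formula in the \emph{principal coefficients} case, where $\mathbb{P} = \operatorname{Trop}(y_1,\ldots,y_N)$. There the constant-term-$1$ property of $F_{i;t}$ forces $F_{i;t}(y)|_\mathbb{P} = 1$ (every monomial of $F_{i;t}$ contributes to the tropical minima of the exponents, and the constant $1$ drives each minimum to $0$), so the claim reduces to $x_{i;t} = F_{i;t}(\hat y)\,\xbf^{\gbf_{i;t}}$. This is essentially definitional once one invokes the Laurent phenomenon of Theorem~\ref{thm:compatible}(a): principal-coefficient cluster variables are Laurent in the $x_j$ with coefficients polynomial in the $y_j$, so the $g$-vector is read off from the unique $y$-free monomial of the numerator, and $F_{i;t}$ is recovered by the specialization $x_j \mapsto 1$. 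The passage to arbitrary coefficients then uses the coefficient specialization lemma: there is a ring homomorphism from the principal-coefficient cluster algebra to $\mathcal{A}(\Sigma_0)$ which sends each principal $y_j$ to the given $y_j \in \mathbb{P}$, and applying this homomorphism to the principal-coefficient identity automatically introduces the normalizing denominator $F_{i;t}(y)|_\mathbb{P}$.

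The hardest part will be the sign bookkeeping inside the F-polynomial and g-vector mutation recurrences, since the $[\,\cdot\,]_+$ truncations split into cases depending on the sign of each exchange-matrix entry $b_{ik}$, and the resulting factors must combine cleanly both with the tropical denominator and with the normalized exchange relation. Once that bookkeeping is in place, the remainder of the proof is a routine comparison of monomials, and the principal-coefficient route makes it considerably more transparent than a direct induction in the general setting.
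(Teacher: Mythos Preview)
The paper does not prove this theorem; it is quoted from the literature as \cite[Corollary~6.3]{fomin2007cluster} and no argument is supplied. Your outline is essentially the proof given in that reference: establish the identity first for principal coefficients, then transfer it to an arbitrary semifield via the coefficient-specialization homomorphism, which is exactly what produces the tropical denominator $F_{i;t}(y_1,\ldots,y_N)|_{\mathbb{P}}$.

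One small point of logical ordering: in Fomin--Zelevinsky's original argument the principal-coefficient case does not rely on the constant-term-$1$ property of $F_{i;t}$ (that was still a conjecture when \cite{fomin2007cluster} was written, and was only settled later in \cite{derksen2010quivers}). Instead, the identity $x_{i;t}=F_{i;t}(\hat y)\,\xbf^{\gbf_{i;t}}$ in the principal setting follows from the $\mathbb{Z}^N$-homogeneity of principal-coefficient cluster variables, which is proved directly by induction on the exchange tree. Your route through the DWZ result is not incorrect, just heavier than needed and chronologically out of order relative to the cited source.
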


\begin{definition}
Let $x_{i;t}$ be a cluster variable. Then the \textbf{h-vector} $\hbf_{i;t}=(h_1,\cdots ,h_N)$ is defined as the exponent vector of the monomial appearing in the tropical evaluation of the F-polynomial:
\[x^{h_1}_1\cdots x^{h_N}_N=F_{i;t}\big(y_j \mapsto x^{-1}_j\prod_{j \rightarrow k}x_k\big) \big|_{\operatorname{Trop}(x_1\cdots x_N)},\]
where the arrow $j \rightarrow k$ runs over the arrows in the initial quiver  with source $j$.
\end{definition}

\begin{proposition}\cite[Proposition~2.4]{derksen2010quivers}
Let $x_{i;t}$ be a cluster variable obtained from the initial seed $\Sigma_0 =(\xbf_0, \ybf_0, Q_0)$ by a sequence of mutations, with the first mutation performed at k. Let $x^{Q'}_{i;t}$ denote the same variable, but expressed with respect to the initial seed $\mu_k(\Sigma_0)=(\xbf', \ybf', Q')$. Let $\gbf_{i;t}=(g_1, \dots, g_N)$ and $\gbf^{Q'}_{i;t}=(g'_1, \dots, g'_N)$ (resp. $\hbf_{i;t}$ and $\hbf^{Q'}_{i;t}$) be the g-vctor (resp. h-vector) of $x_{i;t}$ and $x^{Q'}_{i;t}$, respectively. Define $b^Q_{jk}=Q_{jk}-Q_{kj}$, where $Q_{jk}$ is the number of arrows from $j$ to $k$ in $Q$. Set

\begin{equation}\label{eq:mutationyseed}
	\ybar_j':= 
	\begin{cases}
	y_k^{-1}, & \text{if } j=k;\\
	y_j(1+y_k)^{b_{jk}}, & \text{if } b_{jk}\geq 0;\\
	y_j (1+y_k^{-1})^{b_{jk}}, & \text{if } b_{jk}\leq 0.
	\end{cases}
\end{equation}
Then the g-vectors are related by
\begin{equation}\label{eq:mutationgvector}
	g'_j=
	\left\{
	\begin{array}
		{ll}
		-g_k, & \textup{if $j=k$} ;\\
		g_j+b_{jk}g_k-b_{jk}h_k, & \textup{if $j \neq k$ and $b_{jk} \textgreater 0$};\\
		g_j-b_{jk}h_k, & \textup{if $j \neq k$ and $b_{jk} \leq 0$}.
	\end{array}
	\right.
\end{equation}
We also have
\begin{equation}
	g_k=h_k-h'_k,
\end{equation}
as well as
\begin{equation}\label{eq:mutationfpoly}
	(1+y_k)^{h_k}F_{l;t}^{Q}(y_1, \dots, y_n) = (1+\ybar_k')^{h'_k} F_{l;t}^{Q'}(\ybar_1',\dots, \ybar_n').
\end{equation}

\end{proposition}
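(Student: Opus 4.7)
The plan is to derive all four identities by equating two different Laurent expansions of the same cluster variable $x_{l;t}$ via the separation formula~\eqref{eq:clusterFgformula}. Since $x_{l;t}$ is independent of the choice of initial seed, its expansion with respect to $\Sigma_0 = (\xbf, \ybf, Q)$ must agree with its expansion with respect to $\mu_k(\Sigma_0) = (\xbf', \ybf', Q')$ once the exchange relation linking $x_k$ and $x'_k$ is used to express everything in common variables.

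First I would write
\[
\Big(\prod_j x_j^{g_j}\Big)\frac{F^{Q}_{l;t}(\hat{y})}{F^{Q}_{l;t}(y)|_{\mathbb{P}}} \;=\; \Big(\prod_j (x'_j)^{g'_j}\Big)\frac{F^{Q'}_{l;t}(\hat{y}')}{F^{Q'}_{l;t}(y')|_{\mathbb{P}}}.
\]
The cluster $\xbf'$ differs from $\xbf$ only at position $k$, and the exchange relation gives $x_k x'_k = (y_k M^+ + M^-)/(y_k \oplus 1)$, where $M^\pm$ are monomials in the remaining $x_i$. A direct computation using the definition $\hat{y}_j = y_j \prod_i x_i^{b_{ij}}$ together with the seed mutation rules for $y_j$ and $Q$ shows that the rescaled variables $\hat{y}'_j$ satisfy the coefficient-free Y-mutation of~\eqref{eq:mutationyseed} applied to the $\hat{y}_j$, namely $\hat{y}'_j = \bar{y}'_j(\hat{y}_1,\ldots,\hat{y}_N)$. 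This reduces the displayed equality to an identity in the fraction field of $\mathbb{Z}[x_1,\ldots, x_N]$.

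Once both sides are rewritten in the common variables $(\xbf, \hat{y})$, extracting the exponent of each initial variable $x_j$ yields~\eqref{eq:mutationgvector}: the exchange-relation substitution contributes a shift of $b_{jk} g_k$ to the exponent of $x_j$ when $b_{jk} > 0$, while the tropical factor $F^{Q}_{l;t}(y)|_{\mathbb{P}} = \prod y_j^{h_j}$ produces the correction $-b_{jk} h_k$; the case $b_{jk} \leq 0$ is handled symmetrically using the dual branch of~\eqref{eq:mutationyseed}. The relation $g_k = h_k - h'_k$ then follows from comparing the exponents of $x_k$ alone, which appears on the mutated side only through $x'_k$ and through the tropical denominator. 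What remains after this monomial cancellation is precisely the F-polynomial identity~\eqref{eq:mutationfpoly}.

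The main obstacle will be the careful bookkeeping of the tropical evaluations and the sign cases for $b_{jk}$. Verifying $\hat{y}'_j = \bar{y}'_j(\hat{y})$ requires splitting into the cases $b_{jk}\geq 0$ and $b_{jk}\leq 0$ to match the two branches of~\eqref{eq:mutationyseed}, and the factor $(1+y_k)^{h_k}$ on the left side of~\eqref{eq:mutationfpoly} arises from the interplay between the exchange relation's denominator $y_k\oplus 1$ and the tropical normalization of $F^{Q}_{l;t}$, so one must carefully track which contributions cancel and which accumulate as powers of $(1+y_k)$ versus $(1+\bar{y}'_k)$. The characterization of the h-vector via the tropical specialization of the F-polynomial is the crucial ingredient that makes these powers explicit and completes the identification.
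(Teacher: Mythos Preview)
The paper does not supply its own proof of this proposition: it is quoted verbatim from \cite[Proposition~2.4]{derksen2010quivers} and used as a black box in the proof of Theorem~\ref{thm:variable}. There is therefore nothing in the paper to compare your argument against.

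That said, your sketch is a reasonable outline of the direct algebraic approach via the separation formula~\eqref{eq:clusterFgformula}, which is essentially how one would verify these identities without the representation-theoretic machinery of Derksen--Weyman--Zelevinsky. The original proof in \cite{derksen2010quivers} proceeds instead through the theory of quivers with potentials and decorated representations, interpreting $F$-polynomials and $g$-vectors via the Caldero--Chapoton type formula and tracking how mutation of representations affects these data. Your route is more elementary and self-contained within the cluster-algebra formalism, at the cost of the bookkeeping you already flagged; the DWZ route buys conceptual clarity about \emph{why} the $h$-vector appears (it has a homological meaning there) but requires substantially more background.
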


\subsection{From bigon reduction to cluster mutation}
In this subsection, all admissible link configurations are assumed to be constructed via the ASI procedure introduced in Section~4.2. We begin by introducing a quiver associated with each link universe.

\begin{definition}
Let $L_s$ be a link universe with a distinguished segment $s$, and let $G^b_{L,s}$ be the associated balanced overlaid checkerboard graph. The quiver $Q_{L,s}$ is the dual graph of the plane bipartite graph $G^b_{L,s}$, defined as follows:
\begin{itemize}
	\item [\textbf{--}] Place a vertex in each movable face of $G^b_{L,s}$.
	\item [\textbf{--}] For each half-edge $h_{(c,R)}$ of $G^b_{L,s}$ that seperates two distinct movable faces, draw an arrow crossing $h$ such that the crossing vertex $v_c$ is on the left side of the arrow.
	\item [\textbf{--}] Remove all oriented 2-cycles from the resulting directed graph.
\end{itemize}
\end{definition}

\begin{remark}
Since the movable faces correspond to transposable segments, the quiver $Q_{L,s}$ can also be defined directly from $L_s$ by drawing an arrow from a transposable segment $s_b$ to $s_a$ whenever $s_a$ and $s_b$ are incident to the same crossing point $c$, and $s_a$ can be obtained from $s_b$ by a counterclockwise rotation around $c$. For simplicity of notation, we will denote the vertex in $Q$ corresponding to the segment $s_i$ by $i$, and write the variable $y_{s_i}$ as $y_i$.
\end{remark}

The following proposition establishes a connection between the quiver after cluster mutation and the one obtained after bigon reduction.

\begin{proposition}\label{prop:quiverdel}
Let $L_s$ be a link universe with a distinguished segment $s$, and let $D(s_u, s_v)$ denote a bigon reduction performed on $L$, where $s_u$ and $s_v$ are the two reduced segments. A bigon reduction is said to be \textit{permitted} with respect to the segment $s$ if neither $s_u$ nor $s_v$ is $s$. Let $L''$ be the resulting link universe after a permitted bigon reduction. Then the quiver $Q_{L'',s}$ can be obtained from $Q_{L,s}$ by the following steps:
\begin{enumerate}
\item For each vertex in $Q_{L,s}$ corresponding to the square face $f_u$ or $f_v$, perform a mutation at that vertex. Denote the resulting quiver by $Q'$.
\item Delete the vertices at which the mutations were performed, along with all arrows in $Q'$ incident to them.
\end{enumerate}
\end{proposition}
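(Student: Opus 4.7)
The plan is to reduce the statement to a purely local computation in a neighborhood of the bigon being reduced. Both sides only alter structure near the two crossings of the bigon: bigon reduction affects only $c_1, c_2$ and the arcs $s_u, s_v$, while mutation at a vertex $k$ modifies only arrows incident to $k$ and arrows added between pairs of neighbors of $k$. It therefore suffices to fix a finite neighborhood of the bigon and match $Q_{L,s}$ (after $\mu_v\circ\mu_u$ followed by deletion of $u, v$) with $Q_{L'',s}$ on that neighborhood; every other arrow on both sides is literally identical.

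For the local setup, label the two crossings of the bigon $c_1$ and $c_2$, and choose external segments $s_a, s_b$ at $c_1$ and $s_c, s_d$ at $c_2$ so that the counterclockwise cyclic order at $c_1$ is $(s_u, s_a, s_b, s_v)$ and at $c_2$ is $(s_u, s_v, s_d, s_c)$. By the definition of $Q_{L,s}$, the crossing $c_1$ contributes the directed $4$-cycle $u\to a\to b\to v\to u$ and $c_2$ contributes $u\to v\to d\to c\to u$; the arrows $v\to u$ from $c_1$ and $u\to v$ from $c_2$ form an oriented $2$-cycle and are erased. Hence the local picture of $Q_{L,s}$ consists of $u\to a$, $a\to b$, $b\to v$, $v\to d$, $d\to c$, $c\to u$, with no arrow between $u$ and $v$. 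After reduction, $c_1$ and $c_2$ merge into a single crossing $c$ whose four incident segments appear in counterclockwise order $(s_c, s_a, s_b, s_d)$, yielding the $4$-cycle $c\to a\to b\to d\to c$ in $Q_{L'',s}$.

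To finish I would compute $\mu_v\circ\mu_u$ on $Q_{L,s}$ explicitly. The unique $2$-path through $u$ is $c\to u\to a$, so $\mu_u$ adds $c\to a$ and reverses the arrows at $u$. The unique $2$-path through $v$ (in the mutated quiver) is $b\to v\to d$, so $\mu_v$ adds $b\to d$ and reverses the arrows at $v$. No $2$-cycles arise at any stage, and removing $u, v$ together with all arrows incident to them leaves exactly $c\to a$, $a\to b$, $b\to d$, $d\to c$, which matches $Q_{L'',s}$.

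The main subtlety I expect is bookkeeping in degenerate configurations: one or more of $s_a, s_b, s_c, s_d$ may coincide with the distinguished segment $s$ (so no corresponding quiver vertex exists), or two of them may coincide as segments of $L$ because of a nearby crossing. In each sub-case the local $4$-cycles on both sides truncate or collapse in parallel, and the same mutation calculation carries through. Pre-existing $2$-cycle cancellations between arrows from $c_1, c_2$ and arrows from other crossings remain valid after $\mu_u, \mu_v$ because these mutations only modify the local structure described above, so the matching is preserved globally.
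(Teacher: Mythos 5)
Your generic local computation is correct and is essentially the paper's argument: the paper's proof simply points to subfigure (a) of Figure~\ref{fig:quiverreduce}, which encodes exactly the hexagon $c\to u\to a\to b\to v\to d\to c$ (after cancelling the two arrows between $u$ and $v$ coming from the two bigon crossings), the mutations at $u$ and $v$ adding $c\to a$ and $b\to d$, and the deletion leaving the $4$-cycle of the merged crossing. Your identification of the cyclic order $(s_c,s_a,s_b,s_d)$ at the merged crossing is also right.

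The gap is in your treatment of the degenerate cases, and it is precisely the one point the paper's proof actually argues. First, a segment fails to be a quiver vertex not only when it \emph{equals} $s$ but whenever it is not transposable, i.e.\ whenever it bounds a region in common with $s$; in particular the reduced segments $s_u$, $s_v$ themselves may fail to be vertices of $Q_{L,s}$ even though the reduction is permitted (permittedness only excludes $s_u=s$ or $s_v=s$), so step (1) may involve no mutation at all at one or both of them. Your proposal never addresses this case, and it is not automatic: if, say, $u$ is absent, the arrow $c\to a$ is never created, so one must explain why $Q_{L'',s}$ does not contain the corresponding arrow of the merged $4$-cycle. The resolution is the paper's key observation: if $s_u$ is not transposable, the region it shares with $s$ is the region $P$ along $s_u$ outside the bigon (it cannot be the bigon itself, by permittedness), and $P$ is also bounded by $s_u$'s two quiver-neighbours among the external segments (your $s_a$ and $s_c$), which are therefore not transposable either, in $L$ and in $L''$; hence all arrows involving $a,c$ are absent on both sides and only the mutation at $v$ (if it exists) matters. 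This propagation statement is missing from your sketch, and "the local $4$-cycles truncate in parallel" is asserting the conclusion rather than proving it. A second, smaller inaccuracy: "no $2$-cycles arise at any stage" is false in general --- e.g.\ in the trefoil-to-Hopf reduction $s_a$ and $s_c$ meet again at the third crossing, which contributes an arrow $a\to c$, so the new arrow $c\to a$ created by $\mu_u$ cancels; the matching still holds because the identical cancellation occurs between the merged crossing and the third crossing in $Q_{L'',s}$, but that is a statement about new cancellations occurring identically on both sides, not (as your closing paragraph says) about pre-existing cancellations remaining valid.
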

\begin{proof}
See subfigure (a) of Figure~\ref{fig:quiverreduce} for an illustration. Since the bigon reduction is assumed to be permitted, if the segment $s_u$ is not transposable, then the segments $s_a$ and $s_b$ are also not transposable. Therefore, removing the vertices corresponding to untransposable segments, along with all incident arrows, yields the desired result.
\end{proof}

\begin{figure}[hbp]
	\centering
	\subfigure[]{
		\includegraphics[width=0.60\textwidth, height=0.12\textheight]{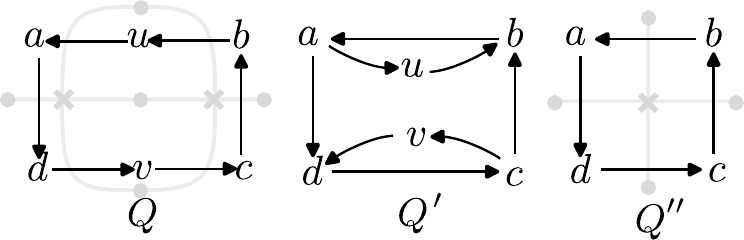}}
	\hspace{2mm}
	\subfigure[]{
		\includegraphics[width=0.35\textwidth,, height=0.12\textheight]{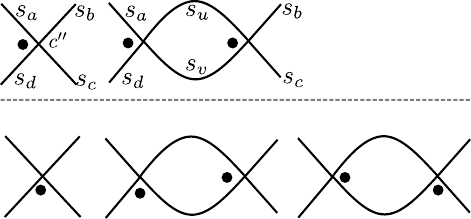}}
	\caption{(a) Subquivers corresponding to local structures arising from bigon reductions. (b) Two types of marker correspondences.}
     \label{fig:quiverreduce}
\end{figure}

\begin{definition}
Let $L_{s;l}$ be an admissible link configuration. Then a \textit{bigon reduction sequence} of $L_{s;l}$ is a set of permitted bigon reductions performed on $L_s$, applied sequentially, such that the resulting link universe is the Hopf link universe. Denote by $s_q$ the unique transposable segment in the Hopf link universe. We represent the sequence by 
\[rd(L_{s;l})\coloneqq s_q, \ldots, s_j, s_k,\]
where, for each bigon reduction $D(s_u, s_v)$ performed, we append the transposable segments in $\{s_u, s_v\}$ to the left side of the sequence, and finally append the segment $s_q$ as the leftmost entry.\\
Associated to each sequence $rd(L_{s;l})$, we define a corresponding mutation sequence $\mu_{rd}$ on the quiver $Q_{L,s}$, given by 
\[\mu_{rd} \coloneqq \mu_q \circ \ldots \circ \mu_j \circ \mu_k.\]
\end{definition}

\vspace{0.5em}

\noindent
After assigning a cluster mutation sequence to each bigon reduction sequence, we characterize the resulting cluster variable in the following theorem. Analogous results for broader classes of links have previously been established in \cite{bazier2024knot} and \cite{meszaros2024dimer}.

\begin{theorem}\label{thm:variable}
Let $L_{s;l}$ be an admissible link configuration. Let $rd(L_{s;l})$ be a bigon reduction sequence of $L_{s;l}$, and let $\mu_{rd}$ be the associated  mutation sequence. Define $\Sigma_t=\mu_{rd}(\Sigma_0)$ to be the seed obtained from the initial seed $\Sigma_0=(\xbf_0,\ybf_0,Q_{L,s})$ by applying the mutation sequence $\mu_{rd}$. Denote by $s_q$ the segment at the leftmost position in the sequence $rd(L_{s;l})$, and let $x_{q;t}$ be the cluster variable in $\xbf_t$ corresponding to $s_q$. Then the following statements hold:
\begin{itemize}
	\item [(a)] Let $\gbf_{q;t}=(g_1,\ldots,g_N)$ be the g-vector of the variable $x_{q;t}$. Then 
	\[g_j=1-k(s_j),\] 
	where $k(s_j)$ denotes the number of markers in the minimal state $\Scal_{\min}$ that are adjacent to the segment $s_j$. In particular, in $\Scal_{\min}$, a counterclockwise transposition can be performed at the segment $s_j$ if and only if $g_j=-1$.
	\item [(b)] Let $F_{q;t}$ be the F-polynomial of the variable $x_{q;t}$. Then
	 \[F_{q;t}(\ybf)= \Mcal_{L,s}(\ybf),\]
	 where $\Mcal_{L,s}(\ybf)$ is the states lattice polynomial of $L_s$.
	 \item [(c)] Let $\dbf_{q;t}=(d_1, \dots, d_N)$ be the denominator vector of the variable $x_{q;t}$. Then we have $d_j=1$ for each $j=1, \dots, N$.
\end{itemize}
\end{theorem}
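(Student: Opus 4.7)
The plan is to argue by induction on the number of crossings of $L$, or equivalently on the length of the bigon reduction sequence $rd(L_{s;l})$. In the base case $L$ is the Hopf link, so $rd$ reduces to $s_q$ alone, the initial seed has rank one, and $\mu_{rd}=\mu_q$ is a single mutation. Direct calculation gives the variable $x_{q;t}$ a g-vector $-e_q$, an F-polynomial $1+y_q$, and a denominator $x_q$. Combinatorially, the Hopf link has exactly two Kauffman states, both markers of $\Scal_{\min}$ are adjacent to $s_q$ (giving $k(s_q)=2$), and $\Mcal_{H,s}(\ybf)=1+y_q$, so all three assertions are verified.

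For the inductive step, use Theorem~\ref{thm:adlinktri} to realise $L$ as obtained from a smaller admissible link configuration $L^{\flat}$ through a single admissible bigon extension $E_{c_i}$. The new bigon contributes two transposable segments $s_u$ and $s_v$, whose reduction is the first step of $rd(L_{s;l})$, so $\mu_{rd}=\mu_{rd^{\flat}}\circ\mu_{v}\circ\mu_{u}$. By Proposition~\ref{prop:quiverdel}, after the initial pair of mutations the subquiver of $Q_{L,s}$ on the remaining vertices coincides with $Q_{L^{\flat},s}$, and since $\mu_{rd^{\flat}}$ never revisits $s_u$ or $s_v$, the induction hypothesis applied to $L^{\flat}$ determines $x_{q;t}$ up to the translation back to the initial seed of $L$ produced by $\mu_{v}\circ\mu_{u}$ via~\eqref{eq:mutationgvector} and~\eqref{eq:mutationfpoly}.

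For part~(a), the shift $\gbf_{q;t}-\gbf_{q;t}^{\flat}$ produced by two applications of~\eqref{eq:mutationgvector} is supported on indices adjacent to $s_u$ or $s_v$ in $Q_{L,s}$, while Proposition~\ref{pro:regiontype} together with Figure~\ref{fig:statemini} identifies the segments whose marker counts differ between $\Scal_{\min}$ and $\Scal_{\min}^{\flat}$; these two sets of indices coincide, and the predicted numerical change matches, by a finite case check in the spirit of the proof of Proposition~\ref{prop:activityrule}. For part~(b), the state lattice of $L$ decomposes as the state lattice of $L^{\flat}$ paired with the local two-element lattice at the new bigon, so $\Mcal_{L,s}$ factors combinatorially; on the algebraic side,~\eqref{eq:mutationfpoly} expresses $F_{q;t}^{Q}$ in terms of the post-mutation F-polynomial, and the heart of the step is identifying the cluster exchange ratios $\bar{y}_u',\bar{y}_v'$ of~\eqref{eq:mutationyseed} with the weight ratios $\om(s_u), \om(s_v)$ of the two new counterclockwise transpositions. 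Part~(c) then follows by combining (a) and~(b) with the Laurent expansion~\eqref{eq:clusterFgformula}: the denominator exponent of $x_j$ is controlled by pairing $g_j$ with the maximal $y_j$-degree in $\Mcal_{L,s}$, and the combinatorics of saturated chains from $\Scal_{\min}$ to $\Scal_{\max}$ forces this pairing to evaluate to $1$ for every $j$.

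The main obstacle is the matching inside~(b): the mutation formulas~\eqref{eq:mutationyseed} and~\eqref{eq:mutationfpoly} simultaneously encode the $b$-matrix of $Q_{L,s}$ at $s_u$ and $s_v$ together with the tropical $h$-vector of the F-polynomial, and both sides must align with the sign conventions governing the weight ratios $\om$ dictated by Proposition~\ref{prop:activityrule}. Because bigon extensions of type~A and type~B yield mirrored configurations in $G^b_{L,s}$, two parallel case analyses are required, and within each one must keep careful track of the orientation of the arrows of $Q_{L,s}$ crossing the half-edges at $s_u$ and $s_v$. Once these local matches are settled, the inductive step closes cleanly and the three identities propagate up from the Hopf link base case to every admissible link configuration.
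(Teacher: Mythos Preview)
Your inductive scaffold and base case match the paper's, but the heart of the inductive step has genuine gaps in both~(a) and~(b).

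For part~(b), the claim that ``the state lattice of $L$ decomposes as the state lattice of $L^{\flat}$ paired with the local two-element lattice at the new bigon, so $\Mcal_{L,s}$ factors combinatorially'' is false. The correspondence between Kauffman states before and after a bigon extension is not uniformly 1-to-2: some states of $L^{\flat}$ lift to two states of $L$ and others lift to only one (compare the cases in Proposition~\ref{prop:doublespancorre} and Figure~\ref{fig:bigonextensionmatch}). Consequently $\Mcal_{L,s}$ is \emph{not} of the form $(1+y_v)\Mcal_{L^{\flat},s}$ in general. The paper handles this by showing that the substitution $y_c\mapsto y_cy_v(y_v+1)^{-1}$, $y_d\mapsto y_d(y_v+1)$ from~\eqref{eq:mutationyseed}, together with a multiplication by $(1+y_v)^{-h_v}$, carries $\Mcal_{L'',s}$ to $\Mcal_{L,s}$; this requires a case analysis on whether the marker $(c'',R_{dc})$ lies in $\Scal''_{\min}$ and in $\Scal''$, governed by Lemma~\ref{lem:monoialdeg}. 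Your phrase ``identifying $\bar y_u',\bar y_v'$ with the weight ratios $\om(s_u),\om(s_v)$'' is a category error: the $\bar y_j'$ are formal rational substitutions in the $y$-variables, while $\om(s_j)$ are Laurent monomials in $A$ that enter only at the specialization stage (Lemma~\ref{lem:specall}, Theorem~\ref{thm:fpolyspacial}), not in proving $F_{q;t}=\Mcal_{L,s}$.

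For part~(a), you assert that the shift in g-vectors is supported on the neighbours of $s_u,s_v$ and can be matched to marker counts by a finite case check, but you are missing the mechanism that pins down $g_v'$ in the intermediate seed $Q'$. The paper's argument needs two ingredients you do not mention: first, since $\mu_{rd'}$ never mutates at $v$, the compatibility criterion (Theorem~\ref{thm:compatible}(b)) forces $x_v$ to be absent from the denominator of $x^{Q'}_{q;t}$; second, Lemma~\ref{lem:monoialdeg} controls the relative $y_c,y_d$-degrees in every monomial of $F^{Q'}_{q;t}=\Mcal_{L'',s}$, which via~\eqref{eq:clusterFgformula} translates into the exponent of $x_v$ and hence determines $g_v'$ (and then $h_v'=0$). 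Without these, the ``finite case check'' has no way to compute $g_v'$, and the rest of~\eqref{eq:mutationgvector} cannot be applied. Part~(c) in the paper likewise relies on a separate mutation-invariance result for denominator vectors rather than on pairing $g_j$ with a maximal $y_j$-degree.
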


Before proving Theorem~\ref{thm:variable}, we provide some preliminary results.

\begin{proposition}\cite[Proposition~5.11]{meszaros2024dimer}\label{prop:quiverdelcorr}
The notions are as in Theorem~\ref{thm:variable}. Let $D(s_u, s_v)$ be the first bigon reduction in $rd(L_{s;l})$ to be performed on $L$, and let $L''$ be the resulting link universe. Then $rd'(L_{s;l})$, obtained from $rd(L_{s;l})$ by deleting the reduced segments, is a bigon reduction sequence for $L''_{s;l''}$. Denote the associated mutation sequence by $\mu_{rd'}$. Let $Q'$ be the quiver obtained from $Q_{L;s}$ by performing mutations at the vertices corresponding to the reduced segments, and denote by $Q''$ the quiver $Q_{L'';s}$. Let $x^{Q'}_{q;t}$ and $x^{Q''}_{q;t}$ denote the cluster variables corresponding to the segment $s_q$, obtained by applying the mutation sequence $\mu_{rd'}$ to the initial seeds with initial quivers $Q'$ and $Q''$, respectively. Then the following hold:
\begin{itemize}
	\item [(a)] Let $F^{Q'}_{q;t}$ and $F^{Q''}_{q;t}$ be the F-polynomials corresponding to the variables $x^{Q'}_{q;t}$ and $x^{Q''}_{q;t}$, respectively. Then we have $F^{Q'}_{q;t}=F^{Q''}_{q;t}$.
	\item [(b)] Let $\gbf^{Q'}_{q;t}$ and $\gbf^{Q''}_{q;t}$ be the g-vectors corresponding to the variables $x^{Q'}_{q;t}$ and $x^{Q''}_{q;t}$, respectively. Then the entries of $\gbf^{Q'}_{q;t}$ and $\gbf^{Q''}_{q;t}$ coincide, except for those corresponding to the segments $s_u$ or $s_v$.
\end{itemize}
\end{proposition}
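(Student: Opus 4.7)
The plan is to establish both parts by induction on the length of the mutation sequence $\mu_{rd'}$, exploiting the fact that this sequence never mutates at the vertices corresponding to $s_u, s_v$. For the base case (empty sequence), both F-polynomials equal $1$ and both g-vectors are standard basis vectors at position $q$, which agree at all coordinates indexed by $V \setminus \{u, v\}$.

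For the inductive step, write $\mu_{rd'} = \mu' \circ \mu_k$ with $k \notin \{u, v\}$; this is guaranteed by construction of $rd'$, since the first mutation remaining in $\mu_{rd'}$ corresponds to a segment appearing in the second bigon reduction of $rd(L_{s;l})$. First I would verify that mutation at $k$ commutes with the passage from $Q'$ to $Q''$, in the sense that the subquiver of $\mu_k(Q')$ supported on $V \setminus \{u, v\}$ equals $\mu_k(Q'')$. This follows from the quiver mutation rules: a new arrow $i \to j$ produced by mutating at $k$ arises from a path $i \to k \to j$ in the pre-mutation quiver; if both $i, j \in V \setminus \{u, v\}$, then the required arrows are present in both $Q'$ and $Q''$ by Proposition~\ref{prop:quiverdel}, yielding identical new arrows. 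Arrows that reverse or rearrange at $u, v$ simply do not appear in the restriction to $V \setminus \{u, v\}$.

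The main obstacle lies in tracking the evolution of the F-polynomial and g-vector under the mutation at $k$. Using Equations~\eqref{eq:mutationfpoly} and \eqref{eq:mutationgvector}, the transformation laws depend only on the exchange matrix entries $b_{jk}$ of the current quiver and on the value of $h_k$. Since $k \in V \setminus \{u, v\}$ and the arrows between $k$ and any vertex in $V \setminus \{u, v\}$ coincide in $Q'$ and $Q''$, these mutation laws restrict compatibly to the $V \setminus \{u, v\}$ coordinates of the g-vector and to the specialization $y_u = y_v = 0$ of the F-polynomial. The technical heart of the argument is to show that this compatibility propagates throughout the full sequence $\mu_{rd'}$: namely, that after each inductive step, $F^{Q'}_{q;t}$ has no dependence on $y_u$ or $y_v$, and that the subquivers involving $u, v$ in the iterated mutations of $Q'$ remain structurally compatible so that the $h$-vector entry $h_k$ at each mutation step agrees in both quivers.

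This sink-type decoupling hinges on the specific local structure of $Q'$ near $u, v$ produced by the initial mutations at these vertices, as described in Proposition~\ref{prop:quiverdel} and depicted in Figure~\ref{fig:quiverreduce}. Since this proposition is cited from \cite{meszaros2024dimer}, I would refer the reader to that reference for the detailed combinatorial verification that the arrows between $\{u, v\}$ and $V \setminus \{u, v\}$ never recombine under $\mu_{rd'}$ in a way that would pull $y_u$ or $y_v$ into the F-polynomial at position $q$.
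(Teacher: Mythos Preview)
The paper does not supply its own proof of this proposition; it is quoted from \cite{meszaros2024dimer}. So there is no ``paper's proof'' to compare against, and your proposal should be judged on its own merits.

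Your inductive strategy is sound, and the use of Equations~\eqref{eq:mutationgvector} and~\eqref{eq:mutationfpoly} to peel off one mutation at a time is exactly right. The observation that $\mu_k(Q'')$ is the full subquiver of $\mu_k(Q')$ on $V\setminus\{u,v\}$ whenever $k\notin\{u,v\}$ is correct and standard.

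Where your argument falls short is in the final two paragraphs. You identify the ``technical heart'' correctly---that $F^{Q'}_{q;t}$ never acquires dependence on $y_u,y_v$ and that $h_k$ agrees in both quivers at every step---but then you claim this ``hinges on the specific local structure of $Q'$ near $u,v$'' and defer to the cited reference. This is both unnecessary and a genuine gap: you have not actually closed the induction.

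In fact no special structure is needed. The missing observation is purely general: since $\mu_{rd'}$ never mutates at $u$ or $v$, one has $F_u\equiv F_v\equiv 1$ and the $u$-th and $v$-th rows of the $C$-matrix remain $e_u,e_v$ throughout (a one-line check from the $C$-matrix mutation rule). Consequently, in the exchange relation for $F_k$ with $k\neq u,v$, the factors $F_u^{[b_{uk}]_+},F_v^{[b_{vk}]_+}$ contribute $1$ and the monomials $y_k^{\pm}$ carry no $y_u,y_v$; the recursion is literally identical to that in $Q''$, regardless of what arrows connect $\{u,v\}$ to the rest. The same reasoning shows $h_k$ (the exponent of $x_k$ in the tropical evaluation) is unaffected by the presence of $x_u,x_v$ in the substitutions $\hat y_j$, and that the $g$-vector recursion, projected to coordinates in $V\setminus\{u,v\}$, is identical in both quivers because $g_u=e_u$, $g_v=e_v$ contribute only to the $u,v$ entries. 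With these remarks your induction closes directly, without appeal to Figure~\ref{fig:quiverreduce} or to \cite{meszaros2024dimer}.
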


\begin{lemma}\label{lem:monoialdeg}
	Let $L''_s$ be a link universe with a distinguished segment $s$, and let $\Scal''$ be a Kauffman state of $L''_s$. Denote by $m(\Scal'')$ the state monomial associated with $\Scal''$, and let $\deg(y_v)$ denote the exponent of $y_v$ in $m(\Scal'')$. As illustrated in subfigure (b) of Figure~\ref{fig:quiverreduce}, let $R_{dc}$ denote the region adjacent to both segments $s_d$ and $s_c$. Then the following hold:
	\begin{itemize}
		\item [(a)] If $(c'',R_{dc}) \in \Scal''_{\min}$ and $(c'',R_{dc}) \notin \Scal''$, then $\deg(y_c)=\deg(y_d)+1$.
		\item [(b)] If $(c'',R_{dc}) \notin \Scal''_{\min}$ and $(c'',R_{dc}) \in \Scal''$, then $\deg(y_d)=\deg(y_c)+1$.
		\item [(c)] If $(c'',R_{dc})$ belongs to both $\Scal''_{\min}$ and $ \Scal''$, or to neither, then $\deg(y_d)=\deg(y_c)$.
	\end{itemize}
\end{lemma}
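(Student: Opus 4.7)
The approach is to evaluate $m(\Scal'')$ along an explicit saturated chain $\Scal''_{\min} = \Scal''_0 \lessdot \Scal''_1 \lessdot \cdots \lessdot \Scal''_n = \Scal''$ and track the marker at the crossing $c''$ step by step. By the well-definedness of the state monomial established earlier, the choice of chain is immaterial, and the quantities $\deg(y_c)$ and $\deg(y_d)$ equal precisely the number of indices $i$ at which a counterclockwise transposition is performed at $s_c$ and at $s_d$, respectively.

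The central observation is that the marker at $c''$ can enter or leave the region $R_{dc}$ only through transpositions at one of the two segments bounding $R_{dc}$ at $c''$, namely $s_c$ and $s_d$; all other counterclockwise transpositions either leave the marker at $c''$ fixed or move it between two regions distinct from $R_{dc}$. Moreover, since a counterclockwise transposition rotates the marker at $c''$ in the counterclockwise sense across the transposed segment, and since $s_c$ and $s_d$ lie on opposite sides of $R_{dc}$ in the local cyclic order around $c''$, transpositions at $s_c$ and at $s_d$ must have opposite effects on whether the marker at $c''$ occupies $R_{dc}$. A careful reading of the configuration in the figure shows that transpositions at $s_c$ take the marker out of $R_{dc}$, while transpositions at $s_d$ bring the marker into $R_{dc}$.

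The lemma then reduces to a simple conservation principle: along the chain, the number of entries of the marker at $c''$ into $R_{dc}$ minus the number of exits equals the net change in occupancy between $\Scal''_{\min}$ and $\Scal''$. In case (a), the marker starts at $R_{dc}$ and ends elsewhere, so exits exceed entries by exactly one, yielding $\deg(y_c) = \deg(y_d) + 1$; case (b) is symmetric, yielding $\deg(y_d) = \deg(y_c) + 1$; in case (c) entries and exits balance, giving $\deg(y_d) = \deg(y_c)$. The main obstacle is pinning down the correct orientation, that is, verifying that counterclockwise transpositions at $s_c$ push the marker out of $R_{dc}$ rather than pulling it in. This is the only genuinely geometric input and requires careful attention to the local cyclic order of $s_c$, $s_d$, and $R_{dc}$ around $c''$ as depicted in the figure; once this convention is fixed, the rest of the proof is an elementary counting argument.
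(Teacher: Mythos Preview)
Your proposal is correct and is essentially a detailed unpacking of the paper's own proof, which consists of the single sentence ``This follows directly from the construction of the counterclockwise transposition.'' Your saturated-chain argument and the entry/exit bookkeeping for the marker at $c''$ are precisely the mechanism that the paper leaves implicit; the orientation check you flag (that transpositions at $s_c$ remove the marker from $R_{dc}$ while those at $s_d$ insert it) is indeed the only geometric input, and it is consistent with how the lemma is used downstream in the proof of Theorem~\ref{thm:variable}.
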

\begin{proof}
This follows directly from the construction of the counterclockwise transposition.
\end{proof}

\noindent\textbf{Proof of Theorem~\ref{thm:variable}.}
We proceed by induction on the length of the bigon reduction sequence. Let $H_{s;l_H}$ be a Hopf link configuration, then the quiver $Q_{H,s}$ consists of a single vertex $v_q$ corresponding to the unique transposable segment $s_q$ in $H_s$, and has no arrows. Hence, the mutation sequence $\mu_{rd}$ consists of a single mutation $\mu_q$ at this vertex. The resulting cluster variable is $x_{q;t}=\frac{1+y_q}{x_q}$. Note that in the minimal state $\Scal_{\min}$ of $H_s$, a counterclockwise transposition can be performed at $s_q$, and the states lattice polynomial $\Mcal_{H,s}(\ybf)=1+y_q$. This coincides with the g-vector $\gbf_{q;t}=-1$, the F-polynomial $F_{q;t}=1+y_q$, and the denominator vector $\dbf_{q;t}=(1)$. This completes the verification of the base case. Now, using the notation described in Proposition~\ref{prop:quiverdelcorr}, assume the statement holds for the cluster variable $x^{Q''}_{q;t}$. \\
(a) First, we determine the g-vector $\gbf^{Q'}_{q;t}=(g'_1,\ldots,g'_N)$ of the cluster variable $x^{Q'}_{q;t}$. By part (b) of Proposition~\ref{prop:quiverdelcorr}, it suffices to determine the values of $g'_v$ and $g'_u$. Without loss of generality, assume that at least one of the segments $s_d$ and $s_c$ is transposable, and we address only $g'_v$; the case of $g'_u$ is analogous. By assumption, the F-polynomial $F^{Q''}_{q;t}$ is equal to the states lattice polynomial $\Mcal_{L'',s}$, and by part (a) of Proposition~\ref{prop:quiverdelcorr}, we have $F^{Q'}_{q;t}=F^{Q''}_{q;t}$. Therefore, the tropical evaluation of the F-polynomial satisfies $F^{Q'}_{q;t}(y_1,\ldots,y_n)\big|_{\mathbb{P}}=1$. Now, by Equation~\eqref{eq:clusterFgformula}, the cluster variable $x^{Q'}_{q;t}$ can be written as 
\[ x^{Q'}_{q;t}=F^{Q'}_{q;t}(\hat{y}_1,\ldots,\hat{y}_d,\hat{y}_c,\ldots,\hat{y}_N) \cdot x_1^{g'_1},\ldots,x_v^{g'_v},\ldots,x_N^{g'_N}.\] 
Note that the variable $x_v$ only appears in $\hat y_d$ and $\hat y_c$: specifically, $\hat y_c$ contains a factor of $x_v^{-1}$ while $\hat y_d$ contains $x_v$. By Equation~\eqref{eq:Laurent}, the cluster variable $x^{Q'}_{q;t}$ can be also written as: \[x^{Q'}_{q;t}=\frac{f(x_1, \dots, x_N)}{x_1^{d_1} \cdots x_N^{d_N}},\]
where $f$ is a polynomial not divisible by any of the $x_j$.
Since the mutation sequence $\mu_{rd'}$ does not include $\mu_v$, both $x_v$ and $x^{Q'}_{q;t}$ appear in the same cluster $\xbf^{Q'}_t$. Therefore, by part (b) of Theorem~\ref{thm:compatible}, the variable $x_v$ doesn't appear in the denominator of the above expression.

Now, consider the degrees of $y_d$ and $y_c$ in the monomials of the F-polynomial $F^{Q'}_{q;t}$. Since $F^{Q'}_{q;t}=\Mcal_{L'',s}$, its monomials correspond to the state monomials $m(\Scal'')$ of $L''_s$. By Lemma~\ref{lem:monoialdeg}, we distinguish two cases depending on whether the marker $(c'',R_{dc})$ is in the minimal state $\Scal''_{\min}$:
\begin{itemize}
\item [\text{Case 1.}] If the marker $(c'',R_{dc}) \in \Scal''_{\min}$, we claim the segment $s_c$ is transposable. Suppose, for contradiction, that $s_c$ is not transposable. Then $s_d$ must be transposable, and consequently, the marker $(c'',R_{ad})$ cannot appear in any state of $L''_s$, which contradicts the assumption. Therefore, since $s_d$ is transposable, the marker $(c'',R_{cb})$ must be in some state $\Scal''$. Then by Lemma~\ref{lem:monoialdeg}, there exists a monomial in $F^{Q'}_{q;t}$ satisfying $\deg(y_c)=\deg(y_d)+1$. Substituting into the formula for $x^{Q'}_{q;t}$, this monomial must contain $x_v^{-1}$, which implies $g'_v=1$.
\item [\text{Case 2.}] If the marker $(c'',R_{dc}) \notin \Scal''_{\min}$, the state monomials of $L''_s$ satisfy either $\deg(y_d)=\deg(y_c)+1$ or $\deg(y_d)=\deg(y_c)$. Hence, among the monomials in $F^{Q'}_{q;t}$, the contributions of $x_v$ either cancel or appear only in positive exponents. Therefore, $x_v$ cannot appear in the denominator of $x^{Q'}_{q;t}$, and we conclude $g'_v=0$.
\end{itemize}
Also note that a counterclockwise transposition can be performed at segment $s_v$ in $\Scal_{\min}$ if and only if the marker $(c'',R_{dc})$ is in $\Scal''_{\min}$.

Consider the h-vector $\hbf^{Q'}_{q;t}=(h'_1,\cdots ,h'_N)$ of the cluster variable $x^{Q'}_{q;t}$, We aim to determine the entry $h'_v$. Since $F^{Q'}_{q;t}$ does not contain the variable $y_v$,  in the substitution $y_i =x^{-1}_i\prod_{i \rightarrow j}x_j$, the contributions of $x_v$ either cancel or appear only with positive exponents. Therefore, $x_v$ cannot appear in the monomial obtained after tropical evaluation; we conclude that $h'_v=0$. 

Now, let $\hbf^{Q}_{q;t}=(h_1,\cdots ,h_N)$ be the h-vector of the cluster variable $x^{Q}_{q;t}$. By Equation~\eqref{eq:mutationgvector}, we have $g_v=-g'_v$ and $h_v=g_v$. In addition, the other entries transform as follows: $g_d=g'_d$, $g_c=g'_c-g_v$ and $g_j=g'_j$ for all $j \notin \{v, c, d\}$.
It follows that if the marker $(c'',R_{dc}) \notin \Scal''_{\min}$, then $g_v=0$ and $g_c=g'_c$. And if the marker $(c'',R_{dc}) \in \Scal''_{\min}$, then $g_v=-1$ and $g_c=g'_c+1$. These values align with the number of markers adjacent to the segments $s_v$ and $s_c$ in the minimal states of $L''_s$ and $L_s$, respectively, and thus complete the proof.\\
(b) Since the values of $h_v$ and $h'_v$ have been determined in (a), the F-polynomial $F^{Q}_{q;t}$ can be obtained from $F^{Q''}_{q;t}$ by Equation \eqref{eq:mutationfpoly}. To show that $F^{Q}_{q;t}=\Mcal_{L,s}$, it suffices to demonstrate that the states lattice polynomial $\Mcal_{L,s}$ can be derived from $\Mcal_{L'',s}$ in the same manner. In particular, we focus on the variables $y_d$, $y_v$ and $y_c$, and examine how the local structure near the segment $s_v$ reflects the change in each monomial; the case of $y_b$, $y_u$ and $y_a$ is analogous.

Recall that the F-polynomial $F^{Q}_{q;t}$ is obtained from $F^{Q}_{q;t}$ by substituting 
\[y_c \mapsto \ybar'_c=y_c y_v (y_v+1)^{-1} \text{ and } \, y_d\mapsto \ybar'_d=y_d (y_v+1),\] 
followed by multiplication by 1 if $g_v=0$, or by $1+y_v$ if $g_v=-1$.
\begin{itemize}
\item [\text{Case 1.}] If the marker $(c'',R_{dc}) \in \Scal''_{\min}$, then $g_v=-1$, and the F-polynomial is multiplied by $(1+y_v)$.
\begin{itemize}
\item [(i)]	If a state $\Scal''$ contains the marker $(c'',R_{dc})$, suppose $\deg(y_d)=\deg(y_c)=m$, then \[(1+y_v){\ybar'_d}^m{\ybar'_c}^m=(1+y_v){y_d}^m{y_v}^m{y_c}^m={y_d}^m{y_v}^m{y_c}^m+{y_d}^m{y_v}^{m+1}{y_c}^m.\]
This corresponds to two adjacent states in the lattice: one before the transposition and one after it; see subfigure (b) of Figure~\ref{fig:quiverreduce} for an illustration.
\item [(ii)] For a state $\Scal''$ not containing the marker $(c'',R_{dc})$, then the degrees of $y_d$ and $y_c$ in the state polynomial $m(\Scal'')$ satisfy $\deg(y_d)+1=\deg(y_c)=m+1$, then the substitution gives:
\[(1+y_v){\ybar'_d}^{m}{\ybar'_c}^{m+1}={y_d}^{m}{y_v}^{m+1}{y_c}^{m+1}.\]
Since $(c'',R_{dc}) \in \Scal''_{\min}$, a counterclockwise transposition can be performed at segment $s_v$ in $\Scal_{\min}$. As a result, the degrees of  $y_v$ and $y_c$ in the state monomial are greater than that of $y_d$, consistent with the values obtained via substitution.
\end{itemize}
\item [\text{Case 2.}] If the marker $(c'',R_{dc}) \notin \Scal''_{\min}$, then $g_v=0$, and the F-polynomial is not multiplied further.
\begin{itemize}
	\item [(i)]	 For a state $\Scal''$ containing the marker $(c'',R_{dc})$, suppose $\deg(y_d)=\deg(y_c)+1=m+1$, then
	\[{\ybar'_d}^{m+1}{\ybar'_c}^m=(1+y_v){y_d}^{m+1}{y_v}^m{y_c}^m={y_d}^{m+1}{y_v}^m{y_c}^m+{y_d}^{m+1}{y_v}^{m+1}{y_c}^m.\]
	Note that in this case, a counterclockwise transposition at segment $s_v$ cannot be performed in $\Scal_{\min}$, which is accurately reflected in the degrees of variables in the associated monomials.
    \item [(ii)] For a state $\Scal''$ not containing the marker $(c'',R_{dc})$, suppose $\deg(y_d)=\deg(y_c)=m$, then
       \[{\ybar'_d}^m{\ybar'_c}^m={y_d}^m{y_v}^m{y_c}^m.\]
     In this case, $\Scal''$ corresponds to a single state $\Scal$, in which $y_d$, $y_v$ and $y_c$ have equal degrees, consistent with the values obtained via substitution. This completes the proof.
\end{itemize}
\end{itemize}
(c) From the discussion in the proof of (a), the entries of the denominator $\dbf^{Q'}_{q;t}$ and $\dbf^{Q''}_{q;t}$ agree except for those corresponding to the segments $s_u$ and $s_v$. Moreover, by \cite[Proposition 2.7]{cao2017positivity}, the entries $d_i$ of the denominator $\dbf^{Q}_{q;t}$ coincide with those of $\dbf^{Q'}_{q;t}$ except for $d_u$ and $d_v$. Hence, by the inductive hypothesis, it suffices to show that $d_v=1$; the case of $d_u$ is analogous.\\
Now, by Equation~\eqref{eq:clusterFgformula}, the cluster variable $x^{Q}_{q;t}$ can be written as 
\[ x^{Q}_{q;t}=F^{Q}_{q;t}(\hat{y}_1,\ldots,\hat{y}_d,\hat{y}_c,\ldots,\hat{y}_N) \cdot x_1^{g_1},\ldots,x_v^{g_v},\ldots,x_N^{g_N}.\] 
The variable $x_v$ only appears in $\hat y_d$ and $\hat y_c$: specifically, $\hat y_c$ contains a factor of $x_v$ while $\hat y_d$ contains $x_v^{-1}$. As in the discussion of proof of (a), it can be shown that: if $g_v=-1$, then the contributions of $x_v$ in the monomials of $F^{Q}_{q;t}(\hat{\ybf})$ appear with non-negative exponents; and if $g_v=0$, then there exists a monomial containing $x_v^{-1}$. This ensures $d_v=1$ and completes the proof.
\qed

\begin{lemma}\label{lem:specall}
Let $\Ltil$ be a link diagram, and let $L_{s;l}$ be an admissible link configuration. Then the bracket polynomial $\Gamma_{\Ltil}(A)$ can be expressed as a specialization of the states lattice polynomial of $L_s$. More precisely, we have
\[\Gamma_{\Ltil}(A)=\om(\Scal_{\min})\cdot\Mcal_{L,s}(\ybf)|_{y_i=\om(s_i)},\]
where $\Scal_{\min}$ denotes the minimal state of $L_s$, $\om(\Scal_{\min})$ is its weight, and $\om(s_i)$ is the weight ratio associated to the segment $s_i$.
\end{lemma}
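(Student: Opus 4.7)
The plan is to combine the perfect matching expansion of the bracket polynomial (Theorem~\ref{thm:perexpan}) with the state/matching bijection (Proposition~\ref{prop:statematchingcor}), and then telescope the state weights $\om(\Scal)$ along saturated chains in the state lattice.

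First, I would invoke Theorem~\ref{thm:perexpan} to rewrite $\Gamma_{\Ltil}(A)$ as a sum over perfect matchings of $G^b_{L,s}$, and then use the bijection of Proposition~\ref{prop:statematchingcor} to reinterpret this as $\sum_{\Scal}\om(\Scal)$, where the sum ranges over all Kauffman states of $L$ relative to $s$. This step is immediate from the definition of $\om(\Scal)$ as the product of the evaluated signed activity letters over the half-edges of its associated perfect matching.

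Next, I would verify that the weight ratio $\om(s_j)$ is a well-defined invariant of the transposable segment $s_j$, independent of the state $\Scal$ at which the counterclockwise transposition is performed. Since $L_{s;l}$ is admissible, Proposition~\ref{prop:activityrule} assigns to each half-edge of $G^b_{L,s}$ a fixed signed activity letter, independently of any matching. An up move at the square face $f_j$ swaps only the four half-edges incident to $f_j$ with their complementary half-edges, so the ratio $\om(\Scal')/\om(\Scal)$ reduces to a fixed product of four evaluations $\altil(h)\big|_K$ depending only on local data around $s_j$. This establishes well-definedness.

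With the weight ratio in hand, I would proceed by induction on the rank of $\Scal$ in the state lattice to prove
\[
\om(\Scal) \;=\; \om(\Scal_{\min})\cdot m(\Scal)\big|_{y_i=\om(s_i)}.
\]
The base case $\Scal=\Scal_{\min}$ is trivial since $m(\Scal_{\min})=1$. For the inductive step, if $\Scal$ is obtained from $\Scal''$ by a counterclockwise transposition at $s_j$, then by the inductive hypothesis and the definitions of $\om(s_j)$ and of the state monomial along a saturated chain extended by $s_j$, one obtains $\om(\Scal)=\om(\Scal'')\cdot\om(s_j)=\om(\Scal_{\min})\cdot m(\Scal)\big|_{y_i=\om(s_i)}$, where the well-definedness of $m(\Scal)$ (already established in the excerpt) ensures that the argument is independent of the chosen saturated chain. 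Summing this identity over all Kauffman states yields the lemma.

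The main technical point is the well-definedness of $\om(s_j)$; it is crucial here that admissibility fixes a single activity letter per half-edge, since otherwise the local ratio could depend on the global matching and the telescoping would fail. Once this local invariance is in place, the rest of the argument is a routine bookkeeping exercise.
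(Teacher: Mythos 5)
Your proposal is correct and follows essentially the same route as the paper: rewrite $\Gamma_{\Ltil}(A)=\sum_{\Scal}\om(\Scal)$ via the perfect matching expansion and the state/matching correspondence, then telescope $\om(\Scal)/\om(\Scal_{\min})$ along a saturated chain (equivalently, your induction on rank) so that it equals $m(\Scal)$ specialized at $y_i=\om(s_i)$. Your explicit verification that $\om(s_j)$ is well defined—because admissibility fixes the activity letter of each half-edge, so the ratio depends only on the half-edges of the movable face $f_j$—is left implicit in the paper's definition of the weight ratio, but it does not alter the argument.
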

\begin{proof}
By Equation~\eqref{bracketper} and the definition of the weight of the states, the bracket polynomial $\Gamma_{\Ltil}(A)$ can be expressed as:
\[\Gamma=\sum_{\Scal} \om(\Scal)=\om(\Scal_{\min})\sum_{\Scal} \frac{\om(\Scal)}{\om(\Scal_{\min})}.\]
For each Kauffman state $\Scal$, choose a saturated chain $C_{\Scal}$: $\Scal_{\min}=\Scal_0 \lessdot \Scal_1 \cdots \lessdot \Scal_l=\Scal$, and let $s^1\ldots s^l$ denote the transposition words associated to $C_{\Scal}$. Then, by the definition of the weight ratio, we have
\[\frac{\om(\Scal)}{\om(\Scal_{\min})}=\frac{\om(\Scal_1)}{\om(\Scal_{\min})} \cdots \frac{\om(\Scal)}{\om(\Scal_{l-1})}=\om(s^1) \cdots \om(s^l) .\]
On the other hand, the state monomial associated with the state $\Scal$ is
\[m(\Scal)=y_{s^1}\cdots y_{s^l}\]
Recalling that the states lattice polynomial $\Mcal_{L,s}(\ybf)$ is defined as the sum of all such monomials, the desired specialization identity follows.
\end{proof}

\begin{definition}
Let $\Ltil$ be a link diagram, and let $L_{s;l}$ be an admissible link configuration. Suppose that $\Ltil$ is alternating and all of its crossing points have negative signs. Align each transposable segment $s_j$ with a direction such that the black region lies on the left side of $s_j$. Then the \textit{bracket polynomial specialization} is defined by setting
\begin{equation}
	y_j\big|_K=
	\left\{
	\begin{array}
		{ll}
    A^8, & \textup {if $s_j$ goes from the lowest crossing point $c_1$ to a higher one};\\
    -A^4, & \textup {if $s_j$ goes from a lower crossing point (not $c_1$) to a higher one};\\
    -A^{-4}, & \textup {if $s_j$ goes from a higher crossing point to a lower one, and the latter is not active};\\
    A^{-8}, & \textup {if $s_j$ goes from a higher crossing point to a lower one, and the latter is active}.
\end{array}
	\right.
\end{equation}
\end{definition}

We are now ready to prove the second main theorem.

\begin{theorem}\label{thm:fpolyspacial}
Let $\Ltil$ be a link diagram, and let $L_{s;l}$ be an admissible link configuration. Suppose that $\Ltil$ is alternating and all of its crossing points have negative signs.	Let the cluster variable $x^{Q}_{q;t}$ be as defined in Theorem~\ref{thm:variable}. Then the bracket polynomial $\Gamma_{\Ltil}(A)$ can be expressed as a specialization of the F-polynomial $F^{Q}_{q;t}$ via the bracket polynomial specialization. More precisely, we have
\[\Gamma_{\Ltil}(A)=A^{\vert R^u_3\vert+\vert R^l_1\vert-\vert p_-\vert-1}A^{-\vert R^u_1\vert}(-A^3)^{\vert R^l_2\vert+\vert R^l_3\vert}(-A^{-3})^{\vert p_-\vert+1+\vert R^u_2\vert}\cdot F^{Q}_{q;t}(\ybf)\big|_K,\]
where $\vert R^x_i\vert$ denotes the number of certain specific regions as defined in Definition~\ref{def:regions}, and $\vert p_-\vert$ denotes the number of trivial-type attached points on the base string $S_0$ that carry negative signs.
\end{theorem}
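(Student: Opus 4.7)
The plan is to combine Theorem~\ref{thm:variable}(b), which identifies the F-polynomial $F^{Q}_{q;t}(\ybf)$ with the states lattice polynomial $\Mcal_{L,s}(\ybf)$, with Lemma~\ref{lem:specall}, which yields
\[
\Gamma_{\Ltil}(A) = \om(\Scal_{\min}) \cdot \Mcal_{L,s}(\ybf)\big|_{y_i=\om(s_i)}.
\]
The theorem then reduces to two separate verifications: (a) the weight $\om(\Scal_{\min})$ equals the explicit $A$-monomial prefactor displayed in the statement, and (b) for every transposable segment $s_j$, the weight ratio $\om(s_j)$ coincides with the specialization $y_j|_K$.

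For part (a), I would sort the markers of $\Scal_{\min}$ into the six classes of regions introduced in Definition~\ref{def:regions}. Because all crossings are negative, the sign rule $\opsign(h_{(c,R)}) = \opsign(c)\opsign(R)$ reduces to $\opsign(h) = -\opsign(R)$, and the per-class colors and activity letters supplied by Proposition~\ref{pro:regiontype}, evaluated through Table~\ref{tab:braeva}, give the contributions $R^u_1 \to A^{-1}$, $R^u_2 \to -A^{-3}$, $R^l_2 \to -A^3$, $R^u_3 \to A$, and $R^l_3 \to -A^3$. Only the class $R^l_1$ requires a split: its $\vert p_-\vert+1$ markers attached to $c_1$ or to a negative trivial-type attached point carry activity letter $L$ and contribute $-A^{-3}$ apiece, while the remaining $\vert R^l_1\vert - \vert p_-\vert - 1$ markers carry $D$ and contribute $A$ apiece. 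Multiplying the six contributions reproduces the displayed prefactor verbatim.

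For part (b), fix a transposable segment $s_j$ with endpoints $c_a,c_b$, and orient it so that the adjacent black region $R_N$ lies to the left and the adjacent white region $R_S$ lies to the right. A counterclockwise transposition at $s_j$ exchanges the markers of $c_a$ and $c_b$ between $R_N$ and $R_S$, so
\[
\om(s_j)=\frac{\altil\bigl(h_{(c_a,R_N)}\bigr)|_K \cdot \altil\bigl(h_{(c_b,R_S)}\bigr)|_K}{\altil\bigl(h_{(c_a,R_S)}\bigr)|_K \cdot \altil\bigl(h_{(c_b,R_N)}\bigr)|_K}.
\]
By Proposition~\ref{prop:activityrule}, each of the four activity letters equals $L$ if the crossing realises the minimum label incident to the region, and $D$ otherwise. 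Writing $\sigma_{cR}\in\{0,1\}$ for this indicator and substituting Table~\ref{tab:braeva}, the ratio collapses to
\[
\om(s_j)=(-1)^{\Delta_N+\Delta_S}\,A^{4(\Delta_N-\Delta_S)}, \qquad \Delta_N=\sigma_{aN}-\sigma_{bN},\ \Delta_S=\sigma_{bS}-\sigma_{aS}.
\]
Because both $c_a$ and $c_b$ are incident to the common regions $R_N$ and $R_S$, the strictly lower-labeled endpoint forces the indicators of the other endpoint to vanish on both sides. A case analysis on the four branches of the specialization then matches the monomials: in case~1 the endpoint $c_a=c_1$ is the global minimum, so $\sigma_{aN}=\sigma_{aS}=1$ and $\om(s_j)=A^8$; in cases~2 and~3 exactly one of the two relevant indicators equals $1$, yielding $\pm A^{\pm 4}$ with the sign determined by which region realises the minimum; and in case~4 the characterisation of active crossings in Corollary~\ref{cor:actclass} forces $\sigma_{bN}=\sigma_{bS}=1$, giving $\om(s_j)=A^{-8}$.

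The main obstacle will be justifying the ``exactly one'' conclusions of cases~2 and~3 inside an admissible link configuration, together with the ``both'' conclusion of case~4. This requires a careful traversal of the ASI procedure of Section~4.2: along each arc $\bm{\gamma}_0,\bm{\gamma}^j_+,\bm{\gamma}^j_-$ the crossing labels vary monotonically, forcing the lower endpoint of $s_j$ to attain the regional minimum on precisely the side of $s_j$ dictated by the black-on-left orientation; only at the distinguished active crossings identified in Corollary~\ref{cor:actclass} does the regional minimum coincide on both sides of $s_j$, producing the $A^{\pm 8}$ exceptions of cases~1 and~4. Excluding the possibility that both $c_a$ and $c_b$ fail to be regional minima on either side—which would spuriously produce $\om(s_j)=1$—will reduce to checking that the segment separating two regions with smaller-labeled boundary crossings is untransposable, and this can be read off directly from the admissible construction.
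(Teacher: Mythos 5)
Your overall architecture coincides with the paper's: reduce via Theorem~\ref{thm:variable}(b) and Lemma~\ref{lem:specall} to (a) computing $\om(\Scal_{\min})$ and (b) matching each weight ratio $\om(s_j)$ with the specialization $y_j|_K$. Part (a) of your argument is correct and complete: the per-class contributions you list (including the split of $R^l_1$ into the $\vert p_-\vert+1$ markers with letter $L$ and the rest with letter $D$) agree with Proposition~\ref{pro:regiontype}, Table~\ref{tab:regionclass} and Table~\ref{tab:braeva}, and reproduce the stated prefactor. Your algebraic reduction of the ratio is also fine: since the lower endpoint $c_{lo}$ of $s_j$ is incident to both regions adjacent to $s_j$, the indicators at the higher endpoint vanish, so $\om(s_j)$ is $A^{\pm 8}$, $-A^{\pm 4}$ or $1$ according to whether two, one, or none of the two half-edges at $c_{lo}$ adjacent to $s_j$ carry the letter $L$ (with the sign of the exponent governed by the transposition direction).

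The gap is precisely in matching this trichotomy with the four branches of the specialization, which is where the paper's proof does its real work. Your case 4 step is invalid as stated: Corollary~\ref{cor:actclass} classifies \emph{which} crossings are active, and the definition of active only says that the two $L$-half-edges at $c_b$ form a corner at \emph{some} segment incident to $c_b$ --- it does not force that segment to be $s_j$, so it does not give $\sigma_{bN}=\sigma_{bS}=1$. A priori a segment directed (black on the left) from a higher crossing into an active crossing could be one of the other segments at that crossing, in which case the true ratio would be $-A^{-4}$ or $1$ while the rule prescribes $A^{-8}$; symmetrically, a segment directed out of an active crossing whose $L$-corner sits at $s_j$ would have ratio $A^{8}$ while the rule prescribes $-A^{4}$. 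What must actually be proved --- and what the paper establishes in the final paragraph of its proof via the enumeration of local configurations in Figure~\ref{fig:weightratio} together with Corollary~\ref{cor:actclass} --- is the directional statement: in an ASI-constructed configuration, for every active crossing other than $c_1$ the corner segment between its two $L$-half-edges is oriented from the higher crossing into the active one, and every other transposable segment has exactly one $L$ among the two half-edges at its lower endpoint (in particular the ratio $1$ never occurs); one also needs to check that the counterclockwise transposition direction matches the black-on-left orientation so that the sign of the exponent is as claimed. You identify this yourself as ``the main obstacle'' and sketch a traversal of the ASI procedure, but the sketch conflates ``active'' with ``regional minimum on both sides of $s_j$'' and the monotonicity and untransposability claims are asserted rather than proved, so cases 2--4 of the matching are not established and the proof is incomplete at its central step.
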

\begin{proof}
By part (b) of Theorem~\ref{thm:variable}, we have $F_{q;t}(\ybf)= \Mcal_{L,s}(\ybf)$. By Lemma~\ref{lem:specall}, it therefore suffices to determine the weight of the minimal state $\om(\Scal_{\min})$, and to show that the bracket polynomial specialization of each variable $y_j$ coincides with the corresponding weight ratio $\om(s_j)$.

Since all the crossing points in $\Ltil$ are assumed to have negative signs, the half-edges incident to the round vertices corresponding to white (respectively, black) regions carry positive (respectively, negative) signs. From the activity letters listed in Table~\ref{tab:regionclass} and the discussion in the proof of Proposition~\ref{pro:regiontype}, it follows that the weight of the minimal state is given by
\[\om(\Scal_{\min})=A^{\vert R^u_3\vert+\vert R^l_1\vert-\vert p_-\vert-1}A^{-\vert R^u_1\vert}(-A^3)^{\vert R^l_2\vert+\vert R^l_3\vert}(-A^{-3})^{\vert p_-\vert+1+\vert R^u_2\vert}.\]

Now, consider the weight ratio. Let $s_j$ be a segment incident to two crossing points $c_j$ and $c_k$ with $j \textless k$, as illustrated in Figure~\ref{fig:weightratio}. All possible values of the weight ratio $\om(s_j)$ are also listed there. Observe that if $s_j$ is directed from a lower crossing point to a higher one, then the exponent of $A$ in $\om(s_j)$ is positive; whereas if $s_j$ is directed from a higher point to a lower one, it is negative. Moreover, the absolute value of the exponent is either 4 or 8,  depending on whether the lower crossing point $c_i$ is active. In the case that $c_i$ is active, the absolute value equals 8 precisely when the segment  $s_j$ lies between two half-edges whose associated activity letters are both $L$. Otherwise, it is 4. The active crossing points in $L_{s;l}$ are classified in Corollary~\ref{cor:actclass}. It follows that, for all active crossing points except $c_1$, the segment lying between the two half-edges with activity letter $L$ is always directed from a higher crossing point to the active one. This completes the proof.
\end{proof}

\begin{figure}[hbp]
	\centering
	\subfigure[]{
		\includegraphics[width=0.45\textwidth, height=0.09\textheight]{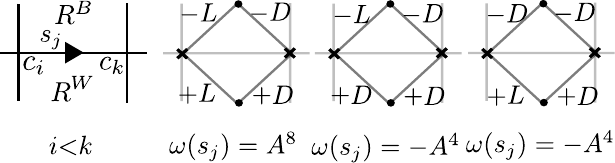}}
	\hspace{4mm}
	\subfigure[]{
		\includegraphics[width=0.45\textwidth, height=0.09\textheight]{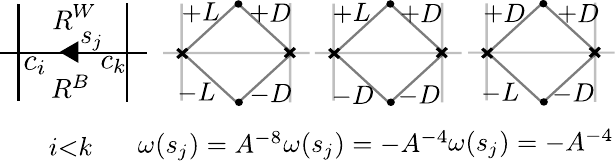}}
	\caption{All possible weight ratios in admissible link configurations.}
	\label{fig:weightratio}
\end{figure}

\begin{remark}
Since the value of the specialization corresponds to the weight ratio, it depends on both the signs of the crossing points and the direction of transposition. If the signs of the crossings are reversed, the exponent in the specialization value will acquire the opposite sign. Similarly, if the maximal state is chosen to be the one with a constant state monomial, the exponent will also change sign accordingly. 
\end{remark}

\begin{corollary}
Let $\Ltil$ be an alternating 2-bridge link diagram such that all of its crossings are negative. Let $L_{s;l}$ denote its associated admissible link configuration. Then its bracket polynomial $\Gamma_{\Ltil}(A)$ can be expressed as a specialization of the F-polynomial $F^{Q}_{q;t}$ via the bracket polynomial specialization
\[\Gamma_{\Ltil}(A)=A^{-\vert R^u_1\vert}(-A^{-3})^{\vert R^l_1\vert}\cdot F^{Q}_{q;t}(\ybf)\big|_K.\]
\end{corollary}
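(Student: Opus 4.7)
The plan is to specialize the formula in Theorem~\ref{thm:fpolyspacial} to the 2-bridge setting by evaluating each region-count exponent. Recall from the construction in Section~4.2 that the admissible link configuration $L_{s;l}$ associated to a 2-bridge link $L(a_1+1, a_2, \ldots, a_n+1)$ is produced from the base shell $S_0$ via the ASI procedure using only trivial-type attached points on $\bm{\gamma}_0$; no curl strings $C_j$ or nested shell strings $S_j$ are ever attached. Consequently, by Definition~\ref{def:regions}, no region of $L$ is enclosed by a curl string or a nested shell, and we immediately obtain $\vert R^u_2 \vert = \vert R^l_2 \vert = \vert R^u_3 \vert = \vert R^l_3 \vert = 0$. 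Substituting these values into Theorem~\ref{thm:fpolyspacial} reduces the prefactor to
\[A^{\vert R^l_1\vert - \vert p_-\vert - 1}\, A^{-\vert R^u_1\vert}\,(-A^{-3})^{\vert p_-\vert+1},\]
so everything boils down to establishing the identity $\vert R^l_1\vert = \vert p_-\vert + 1$.

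The key step, and the main obstacle, is verifying $\vert R^l_1\vert = \vert p_-\vert + 1$. I would proceed by induction on the number of attached points, reusing the correspondence between attached points and admissible bigon extensions exhibited in the proof of Theorem~\ref{thm:adlinktri}. For the base case $H^0_{s;l_H}$ there are no attached points, and a direct inspection of $S_0$ shows exactly one lower region inside the shell, so $\vert R^l_1\vert = 1 = \vert p_-\vert + 1$. For the inductive step, the sufficiency argument in Case~1 of the proof of Theorem~\ref{thm:adlinktri} shows that attaching a new trivial-type point with negative sign is realized by a type-B admissible bigon extension at an active crossing located on the lower arc $\bm{\gamma}_-$. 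By the definition of a type-B extension together with our convention that $R^B_s$ lies below the base string, such an extension creates exactly one new white region between the two daughter cusps; this region is adjacent to $\bm{\gamma}_-$ and hence is of class $R^l_1$, while every previously existing region retains its class. Thus $\vert R^l_1\vert$ and $\vert p_-\vert$ both increase by one, and the induction closes. A symmetric bookkeeping also yields $\vert R^u_1\vert = \vert p_+\vert + 1$, which serves as a consistency check against $\vert R^u_1\vert + \vert R^l_1\vert = n$, where $n = \vert p_+\vert + \vert p_-\vert + 2$ is the total number of crossings.

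With $\vert R^l_1\vert = \vert p_-\vert + 1$ in hand, the exponent $\vert R^l_1\vert - \vert p_-\vert - 1$ vanishes, and the remaining factor $(-A^{-3})^{\vert p_-\vert+1}$ rewrites as $(-A^{-3})^{\vert R^l_1\vert}$. Combining these simplifications with the main formula of Theorem~\ref{thm:fpolyspacial} gives
\[\Gamma_{\Ltil}(A) = A^{-\vert R^u_1\vert} (-A^{-3})^{\vert R^l_1\vert} \cdot F^{Q}_{q;t}(\ybf)\big|_K,\]
which is precisely the claimed specialization.
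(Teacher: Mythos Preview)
Your proposal is correct and follows essentially the same route as the paper: both reduce Theorem~\ref{thm:fpolyspacial} using the fact that the 2-bridge ASI construction uses only trivial-type points (so $\vert R^u_2\vert = \vert R^l_2\vert = \vert R^u_3\vert = \vert R^l_3\vert = 0$) together with the identity $\vert R^l_1\vert = \vert p_-\vert + 1$. The paper simply asserts this identity without argument, whereas you supply an inductive justification; a slightly more direct alternative is to observe that the $\vert p_-\vert$ points $q^k$ on $\bm{\gamma}_-$ subdivide the portion of $\bm{\gamma}_-$ between $q^0$ and $q^\infty$ into exactly $\vert p_-\vert + 1$ arcs, each bordering a distinct lower region.
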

\begin{proof}
Note that in this case, each region belongs to either $R^l_1$ or $R^u_1$, and all attached points on the base string $S_0$ are of trivial type. Therefore, we have $\vert p_-\vert+1 =\vert R^l_1\vert$, and the conclusion follows directly from Theorem~\ref{thm:fpolyspacial}.
\end{proof}

\subsection{Some applications and examples}

The following proposition in fact holds for all links that admit an alternating diagram projection; see \cite{thistlethwaite1987spanning}.

\begin{proposition}
Let $\widetilde{L}$ be an alternating link diagram, and suppose its associated link universe $L$ admits an admissible link configuration. Then the coefficients of its bracket polynomial $\Gamma_{\Ltil}(A)$ alternate in sign with period 4 in the degree.
\end{proposition}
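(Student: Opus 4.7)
The plan is to reduce the claim to a short bookkeeping computation inside Thistlethwaite's spanning tree expansion (Equation \eqref{eq:bracketpolyspaneva}), exploiting the fact that alternating diagrams carry a uniform sign on every edge of $\widetilde G_{\Ltil}$. The admissibility hypothesis is not really needed for this statement; it merely ensures that the objects of the paper are available. What is crucial is the standard fact that, for an alternating diagram, traversing each strand alternately over and under forces $\opsign(c)$, which depends on both over--under information and checkerboard coloring, to be the same at every crossing. I will assume this common sign is negative; the positive case is symmetric.

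Next I fix a spanning tree $T$ of $\widetilde G_{\Ltil}$, and denote by $i_L, i_D, e_\ell, e_d$ the numbers of internally active, internally inactive, externally active and externally inactive edges with respect to $T$. Letting $n$ and $m$ be the numbers of vertices and edges of the underlying checkerboard graph, we have $i_L + i_D = n-1$ and $e_\ell + e_d = m-n+1$. Using Table~\ref{tab:braeva} to evaluate $-L, -D, -\ell, -d$, the contribution of $T$ to \eqref{eq:bracketpolyspaneva} is
\[
(-A^{3})^{i_L}(A^{-1})^{i_D}(-A^{-3})^{e_\ell}(A)^{e_d}
\;=\;(-1)^{i_L + e_\ell}\,A^{\,4(i_L - e_\ell)\,+\,m - 2n + 2}.
\]

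The two observations that finish the proof are immediate from this formula. Setting $c_0 := m - 2n + 2$, the exponent of $A$ is congruent to $c_0$ modulo $4$ for every spanning tree, so $\Gamma_{\Ltil}(A)$ is supported only on degrees $k$ with $k \equiv c_0 \pmod 4$. Moreover, $(-1)^{i_L + e_\ell} = (-1)^{i_L - e_\ell}$ because $(-1)^{2 e_\ell} = 1$, and the exponent of $A$ forces $i_L - e_\ell = (k - c_0)/4$ whenever the tree contributes to degree $k$; thus every tree contributing to the coefficient of $A^k$ carries the common sign $(-1)^{(k - c_0)/4}$, which depends only on $k$ and not on $T$.

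Summing over all spanning trees via \eqref{eq:bracketpolyspaneva}, the coefficient $c_k$ of $A^k$ in $\Gamma_{\Ltil}(A)$ vanishes unless $k \equiv c_0 \pmod 4$, and on that arithmetic progression the nonzero coefficients alternate in sign with period $4$, which is the claim. There is no real obstacle; the only care needed is the sign-parity identity $(-1)^{i_L + e_\ell} = (-1)^{i_L - e_\ell}$ and the rewriting of the $A$-exponent via the cardinality relations $i_L + i_D = n-1$, $e_\ell + e_d = m-n+1$.
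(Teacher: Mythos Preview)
Your proof is correct, and it takes a genuinely different route from the paper's own argument. You work directly with Thistlethwaite's spanning-tree expansion \eqref{eq:bracketpolyspaneva}, using only the classical fact that a reduced alternating diagram has a uniform sign on all edges of $\widetilde G_{\Ltil}$; as you rightly observe, the admissibility hypothesis is then superfluous, a point the paper itself concedes in the sentence preceding the proposition. The paper's proof instead invokes Theorem~\ref{thm:fpolyspacial} to express $\Gamma_{\Ltil}(A)$ as $\om(\Scal_{\min})$ times a specialization of the states lattice polynomial, and then notes that each weight ratio $\om(s_j)\in\{A^8,-A^4,-A^{-4},A^{-8}\}$ is a power of $-A^4$, so that every term of the specialized polynomial has the form $\om(\Scal_{\min})\cdot(-A^4)^k$ for some integer $k$. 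Your argument is more elementary and strictly more general---it applies to all alternating diagrams and bypasses the entire apparatus of admissible configurations, Kauffman states, and Theorem~\ref{thm:variable}---whereas the paper's argument, though restricted to the admissible class, is placed here as a quick sanity check that the specialization machinery of Theorem~\ref{thm:fpolyspacial} recovers a known structural feature of the bracket polynomial.
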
 
\begin{proof}
By Theorem~\ref{thm:fpolyspacial}, the bracket polynomial $\Gamma_{\Ltil}(A)$ can be expressed as a specialization of the rescaled states lattice polynomial. In this specialization, each variable corresponds to the weight ratio associated with a transposable segment, and these ratios take the form of powers of $-A^{\pm 4}$. As a result, the coefficients of $\Gamma_{\Ltil}(A)$ alternate in sign with period 4 in the degree.
\end{proof}

\begin{proposition}
Let $\Ltil$ be an alternating 2-bridge link diagram in which all crossings are negative, and let $L_{s;l}$ denote its associated admissible link configuration. Then the monomials of the bracket polynomial $\Gamma_{\Ltil}(A)$ with minimal and maximal degrees correspond to the weights of the minimal and maximal Kauffman states, respectively.
\end{proposition}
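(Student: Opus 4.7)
The plan is to combine Lemma~\ref{lem:specall} with a monotonicity argument on the states lattice. By Lemma~\ref{lem:specall}, we have $\Gamma_{\Ltil}(A) = \sum_{\Scal} \om(\Scal)$, so it suffices to show that (i) as $\Scal$ ranges over all Kauffman states, the $A$-degree of $\om(\Scal) = \om(\Scal_{\min}) \cdot m(\Scal)|_K$ is strictly minimized at $\Scal_{\min}$ and strictly maximized at $\Scal_{\max}$, and (ii) the extreme monomials cannot be cancelled by contributions from other states.

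For (i), I would analyze the specialized weight ratios $\om(s_j) = y_j|_K$ in the 2-bridge setting. By Corollary~\ref{cor:actclass}, the only active crossing in a 2-bridge configuration is $c_1$, and by the preceding corollary every region is of class $R^u_1$ or $R^l_1$. The claim I want to establish is that every transposable segment $s_j$, when oriented so that the black region lies on its left, runs from its lower-labeled endpoint to its higher-labeled endpoint. Granted this claim, only the first two cases of the bracket-polynomial specialization can occur, so $y_j|_K \in \{A^8,\ -A^4\}$. In particular each $\om(s_j)$ is a (signed) monomial in $A$ of strictly positive $A$-degree, so a counterclockwise transposition at $s_j$ strictly increases $\deg_A \om(\Scal)$. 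Hence $\deg_A \om(\Scal)$ is strictly monotone along every covering relation in the states lattice, forcing the extreme degrees to be attained uniquely at $\Scal_{\min}$ and $\Scal_{\max}$.

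For (ii), I would note that the uniqueness of the extremes obtained in (i) immediately precludes cancellation: no other state monomial has the same degree as $\om(\Scal_{\min})$ or $\om(\Scal_{\max})$. Since both extreme weights are explicitly nonzero monomials — the former by the preceding corollary, and the latter equal to $\om(\Scal_{\min})\prod_j \om(s_j)^{n_j}$ along any saturated chain from $\Scal_{\min}$ to $\Scal_{\max}$ — the two extremal terms in $\Gamma_{\Ltil}(A)$ are exactly $\om(\Scal_{\min})$ and $\om(\Scal_{\max})$.

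The main obstacle is the geometric orientation claim in (i): verifying that every transposable segment of the negative 2-bridge configuration, oriented with black on the left, points from lower to higher label. This requires a careful case analysis of how the checkerboard coloring interacts with the left-to-right linear arrangement of crossings along $\bm\gamma_0$ in the ASI construction (recalling that all attached points are of trivial type and lie on $\bm\gamma_0$), distinguishing segments internal to a twist box from those bridging consecutive twist boxes where the sign of the attached points flips. Once this is settled, the remainder of the argument is essentially a formal consequence of the lattice-monotonicity of the bracket-polynomial specialization.
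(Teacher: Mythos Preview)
Your proposal is correct and follows essentially the same route as the paper's proof: both arguments reduce to the observation that, for a negative alternating 2-bridge configuration, every weight ratio $\om(s_j)=y_j\big|_K$ lies in $\{A^8,\,-A^4\}$ and hence has strictly positive $A$-degree, so the $A$-degree of $\om(\Scal)$ is strictly monotone along the states lattice and the extremes are attained uniquely at $\Scal_{\min}$ and $\Scal_{\max}$. The only difference is that you flag the orientation claim in (i) as an ``obstacle'' requiring a separate case analysis, whereas the paper simply invokes the preceding corollary on 2-bridge links (together with Corollary~\ref{cor:actclass}): since every region is of type $R^u_1$ or $R^l_1$ and the only active crossing is $c_1$, the third and fourth specialization cases cannot occur, leaving exactly $y_j\big|_K\in\{A^8,-A^4\}$ without any further geometric verification.
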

\begin{proof}
In this case, the weight ratios take the form of powers of $-A^{4}$. Therefore, the monomials of the bracket polynomial with minimal and maximal degrees correspond to the constant term and the unique highest-degree monomial in the states lattice polynomial, respectively. After rescaling by the weight $\om(\Scal_{\min})$, these become $\om(\Scal_{\min})$ and $\om(\Scal_{\max})$.
\end{proof}

\begin{corollary}
Let $\Ltil(k)$ be an alternating 2-bridge link diagram with one component (or equivalently, a $(2, k)$-torus link) in which all crossings are negative. Then the bracket polynomial of $\Ltil$ is given by:
\[ \Gamma_{\Ltil}(A)=-A^{k-2}+A^{-1}(-A^{-3})^{k-1}\sum^k_{i=0}(-1)^i A^{4i}.\]
\end{corollary}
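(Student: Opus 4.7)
The plan is to specialize the preceding corollary---which expresses the bracket polynomial of an alternating 2-bridge link with all negative crossings as $\Gamma_{\Ltil}(A) = A^{-|R^u_1|}(-A^{-3})^{|R^l_1|} F^Q_{q;t}(\ybf)|_K$---to the specific $(2,k)$-torus link $\Ltil(k)$, and then perform an algebraic manipulation to produce the closed form in the statement.

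First I would identify the admissible link configuration: by the construction of admissible configurations for 2-bridge link universes given earlier in this section, $\Ltil(k)$ arises via the ASI procedure from the base shell $S_0$ together with $k-2$ trivial-type attached points on $\gamma_0$, all with negative sign. A direct inspection of this construction yields $|R^u_1| = 1$ (the single upper region enclosed by the base shell) and $|R^l_1| = k-1$ (the lower bigon regions between consecutive crossings along $\gamma_-$). By Corollary~\ref{cor:actclass}, the only active crossing in $L(k)$ is $c_1$.

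Next I would compute $F^Q_{q;t}(\ybf)|_K$. By Theorem~\ref{thm:variable}(b), this equals the states lattice polynomial $\Mcal_{L,s}(\ybf)|_K$. I would first verify that the Kauffman state lattice for $L(k)$ is a chain of $k$ states, so that $\Mcal_{L,s}(\ybf) = \sum_{i=0}^{k-1} y_{s^1}y_{s^2}\cdots y_{s^i}$ for a sequence of transposition segments $s^1, \ldots, s^{k-1}$. Combining the specialization rules of Theorem~\ref{thm:fpolyspacial} with the fact that $c_1$ is the unique active crossing and that each transposition segment is oriented from a lower to a higher crossing, one obtains $y_{s^j}|_K = -A^4$ for the $k-2$ segments not adjacent to $c_1$, and $y_{s^j}|_K = A^8$ for the unique segment adjacent to $c_1$, which appears at position $j = k-1$ in the chain. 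Hence,
\[ F^Q_{q;t}(\ybf)|_K = \sum_{i=0}^{k-2}(-A^4)^i + (-A^4)^{k-2}A^8 = \sum_{i=0}^{k-2}(-1)^i A^{4i} + (-1)^k A^{4k}. \]

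Finally, after multiplying by the prefactor $A^{-1}(-A^{-3})^{k-1}$, I would use the identity
\[ \sum_{i=0}^{k-2}(-1)^i A^{4i} + (-1)^k A^{4k} = \sum_{i=0}^{k}(-1)^i A^{4i} + (-1)^k A^{4k-4}, \]
and observe that the term $A^{-1}(-A^{-3})^{k-1}\cdot (-1)^k A^{4k-4}$ collapses to $-A^{k-2}$, which produces the displayed formula. The hard part will be the middle step: proving that the state lattice of $L(k)$ is a chain of length $k$ and locating the active segment at the final step of every saturated chain from $\Scal_{\min}$ to $\Scal_{\max}$. This requires a concrete combinatorial analysis of the ASI configuration, which could be approached by induction on $k$ corresponding to successive admissible bigon extensions.
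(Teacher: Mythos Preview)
Your proposal is correct and follows essentially the same route as the paper's own proof: both identify $|R^u_1|=1$ and $|R^l_1|=k-1$, assert that the Kauffman state lattice is a $k$-element chain with the $A^8$-segment (the one incident to $c_1$) occurring as the final transposition, compute the specialized states lattice polynomial, and rescale by $\om(\Scal_{\min})$. Your algebraic manipulation yielding $-A^{k-2}$ from the extra $(-1)^k A^{4k-4}$ term is exactly the rescaling step the paper leaves implicit, and your expression $\sum_{i=0}^{k-2}(-1)^i A^{4i}+(-1)^k A^{4k}$ agrees with the paper's $-(-1)^{k-1}A^{4(k-1)}+\sum_{i=0}^{k}(-1)^iA^{4i}$.

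One remark: what you flag as ``the hard part'' (chain structure of the lattice and position of the active segment) the paper simply asserts without argument, so you are in fact being more careful than the original. An induction on $k$ via admissible bigon extensions, as you suggest, is a perfectly adequate way to justify it; alternatively one can observe directly that for the $(2,k)$-torus universe the balanced overlaid graph is a path, whose perfect matchings are naturally linearly ordered by the position of the unique ``defect'', with the transposition at the segment adjacent to $c_1$ necessarily occurring last because $c_1$ is active and lies at the end of the band.
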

\begin{proof}
In this case, the set of Kauffman states forms a finite chain lattice with k elements. Moreover, among all the transposable segments, the one incident to the crossing point $c_1$ is the final segment at which a counterclockwise transposition is performed; note that its corresponding variable has specialization value $A^8$. Therefore, the specialization of the states lattice polynomial is given by
\[\mathcal{M}(\ybf)\big|_K=-(-1)^{k-1}A^{4(k-1)}+\sum^k_{i=0}(-1)^i A^{4i}.\]
Also note that $\vert R^u_1\vert=1$ and $\vert R^l_1\vert=k-1$. The formula then follows from a rescaling by $\om(\Scal_{\min})$.
\end{proof}

\begin{figure}[htp]
	\centering
	\includegraphics[width=0.9\textwidth, height=0.08\textheight]{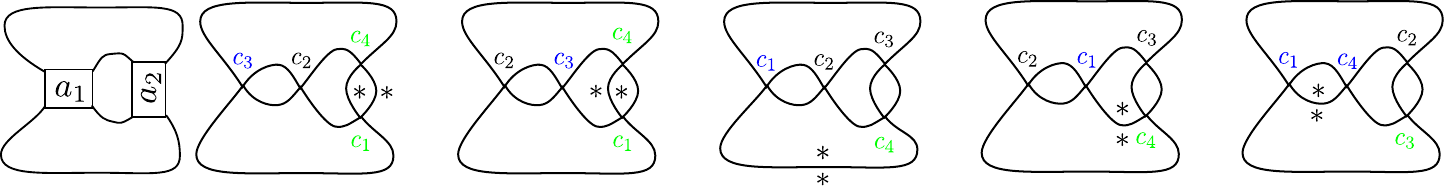}
	\caption{The ordered labelings corresponding to each distinguished segment of the figure-eight link universe.}
	\label{fig:figure8}
\end{figure}
	
\begin{proposition}
Let $L(a_1, a_2)$ be a 2-bridge link universe with two components. Then, for each segment $s$ in $L(a_1, a_2)$, there exists an ordered labeling $l$ such that the associated link configuration $L_{s;l}$ is an admissible link configuration.
\end{proposition}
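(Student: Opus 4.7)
The plan is to apply Theorem~\ref{thm:adlinktri} by exhibiting, for each segment $s$ of $L(a_1,a_2)$, an ASI procedure with an appropriate initial Hopf link configuration $H_{s;l_H}$ whose output is $L_{s;l}$. The flexibility granted by Remark~\ref{rem:inihopf} is essential here: for a general choice of $s$, neither of the two regions bounding $s$ need be the unbounded region, so $H_{s;l_H}$ must be allowed to sit with its connecting edge in the interior of the diagram, and with either choice of color for the region lying below the base shell string $S_0$.

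First, I would classify the segments of $L(a_1,a_2)$ up to the horizontal and vertical reflection symmetries of the two-bridge universe. The two-component hypothesis forces each component to thread through both twist boxes and imposes a parity condition on $(a_1, a_2)$; together these constraints yield only a short list of segment classes: outer segments along the four boundary arcs, segments interior to a twist box, and segments of the connecting bridges between the two boxes. Figure~\ref{fig:figure8} illustrates the representatives together with admissible ordered labelings in the smallest nontrivial case, and the general argument mirrors this picture.

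Second, for each representative $s$, I would choose $H_{s;l_H}$ so that $s$ is precisely its transposable segment, and then recover $L(a_1,a_2)$ via an enclosed sum with trivial-type attached points. When $s$ is an outer boundary segment, after possibly reflecting the diagram I am in the situation already treated for 2-bridge link universes: the base shell string $S_0$ is placed against the unbounded region and an alternating-sign sequence of trivial points on $\bm{\gamma}_0$ reproduces both twist boxes. When $s$ is interior to a twist box or lies on a connecting bridge, I place $S_0$ so that its shell encloses one twist box completely; the remaining crossings of the two boxes are then generated by trivial-type points distributed between $\bm{\gamma}_+$ and $\bm{\gamma}_-$, with signs chosen to match the handedness of each twist. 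By Theorem~\ref{thm:adlinktri}, every link configuration obtained this way is admissible.

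The main obstacle is verifying, in the interior cases, that each such ASI construction really reconstructs $L(a_1,a_2)$ with the prescribed $s$ as its distinguished segment, rather than a different 2-bridge universe or a universe with extra crossings. This is a planar-isotopy check; the two-component parity condition is exactly what is needed to route the connecting arcs of the two components around $s$ without forcing additional crossings through the shell of $H_{s;l_H}$, and it is also what rules out the edge cases in which $s$ would have to be placed between two regions of the same component. Once the planar verification is settled, the corresponding ordered labeling $l$ is produced automatically by the ASI procedure, and admissibility follows from Theorem~\ref{thm:adlinktri}.
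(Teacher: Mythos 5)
Your overall strategy (realize each configuration $L(a_1,a_2)_{s;l}$ as the output of an ASI-type construction and invoke Theorem~\ref{thm:adlinktri}) is genuinely different from the paper's, but as written it has two gaps that sit exactly where the work is. First, Theorem~\ref{thm:adlinktri} is proved only for the specific initial configuration $H^0_{s;l_H}$, i.e.\ for a distinguished segment adjacent to the unbounded region with the black region below $S_0$; for a segment $s$ interior to a twist box or on a connecting bridge you appeal to Remark~\ref{rem:inihopf}, but that remark merely asserts that ``analogous constructions can be carried out'' --- neither the modified ASI procedure (strings with connecting edges inside the shells) nor the analogue of Theorem~\ref{thm:adlinktri} is stated or proved anywhere in the paper. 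So in precisely the cases not already covered by the earlier 2-bridge construction, your main tool is unavailable unless you first redo Section~4.2 for interior placements of the shell string. Second, the heart of your argument --- that the proposed placements of $S_0$ and the trivial-type points really reconstruct $L(a_1,a_2)$ with the prescribed $s$ as distinguished segment, together with an explicit ordered labeling --- is deferred to an unspecified ``planar-isotopy check'' backed by a vague appeal to a parity condition on $(a_1,a_2)$; the segment classification is asserted rather than verified, and even the boundary case is not fully reduced to the treated one, since the ASI construction of Proposition on 2-bridge universes realizes only the lower segment, while other segments adjacent to the unbounded region need their own labelings. As it stands the proposal is a plausible plan rather than a proof.

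For contrast, the paper sidesteps both issues: it verifies admissibility directly, segment by segment, on the figure-eight universe $L(2,2)$ (Figure~\ref{fig:figure8}), and then grows $a_1$ and $a_2$ by bigon extensions performed at crossings incident to the two regions adjacent to $s$. Because those regions correspond to the omitted vertices, condition (i) of Theorem~\ref{thm:localcri} fails, so these extensions are automatically admissible, with no activity-letter bookkeeping and no generalized ASI machinery. If you want to keep your route, you must either prove the interior-segment analogue of Theorem~\ref{thm:adlinktri} or replace the ASI step by an explicit sequence of admissible bigon extensions from a small base case --- which essentially lands you back on the paper's argument.
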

\begin{proof}
The case where either $a_1=1$ or $a_2=1$ is straightforward. We therefore assume that both $a_1$ and $a_2$ are greater than 1. The various ordered labelings corresponding to each distinguished segment of the figure-eight link universe are displayed in Figure~\ref{fig:figure8}, with segments symmetric about the horizontal axis omitted. It is easy to check that each of them forms an admissible link configuration. Then, by iteratively applying bigon extensions of type A (at the crossings marked in green) and type B (at those marked in blue), we obtain all link configurations of the form $L(a_1, a_2)$ with various distinguished segments. Note that all such crossings are incident to regions that are adjacent to $s$, and therefore the corresponding bigon extensions are admissible. This completes the proof.
\end{proof}

\begin{remark}
A quiver associated to each two-component 2-bridge link universe is constructed and analyzed in detail in \cite{schiffler2022tilting}. However, it is important to note that the bracket polynomial specialization depends on the chosen ordered labeling. Consequently, the same segment may yield different specialization values depending on the distinguished segment relative to which it is evaluated.
\end{remark}

\begin{figure}[htp]
	\centering
	\includegraphics[width=0.83\textwidth, height=0.26\textheight]{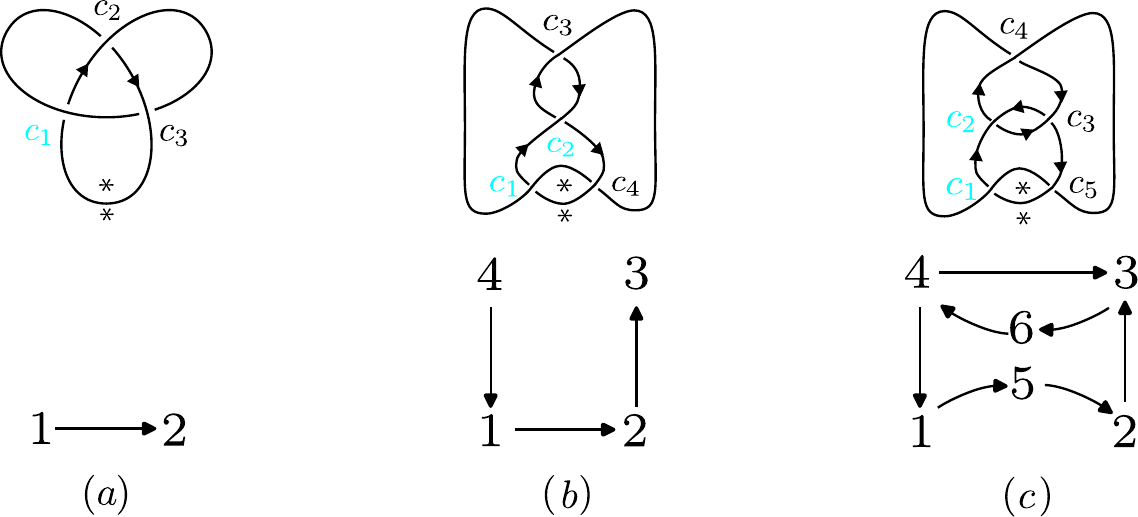}
	\caption{The admissible link configurations and their corresponding quivers, with each transposable segment oriented and active crossings marked in cyan.}
	\label{fig:specialexample}
\end{figure}

\begin{example}
We present three examples demonstrating applications of the specialization formula. The link diagram $\Ltil_{s;l}$ and its associated quiver $Q_{L,s}$ are illustrated in Figure~\ref{fig:specialexample}. The vertices of the quiver are aligned to match the positions of their corresponding transposable segments. Each transposable segment $s_j$ is oriented so that the black region lies to its left, and the active crossing points are marked in cyan.
\begin{itemize}
	\item [(a) ] \textbf{Trefoil:} In this case, $\vert R^u_1\vert=2$ and $\vert R^l_1\vert=1$, hence $\om(\Scal_{\min})=-A^{-5}$. \\
	The mutation sequence is given by $\mu_1 \circ \mu_2$, and we have $F^Q_{1,t}=1+y_1+y_1y_2$. \\
	Therefore, the bracket polynomial is given by
	\[\left\langle \Ltil \right\rangle=\om(\Scal_{\min})\cdot F^Q_{1,t}(\ybf)\big|_{y_1=A^8, \, y_2=-A^4}=-A^{-5}-A^3+A^7.\]
	\item [(b)] \textbf{Figure-eight knot:} In this case, $\vert R^u_1\vert=2$, $\vert R^l_1\vert=1$ and $\vert R^u_2\vert=1$, hence $\om(\Scal_{\min})=A^{-8}$.\\
    The mutation sequence is given by $\mu_1 \circ \mu_2 \circ \mu_3 \circ \mu_4$, and we have 
    \[F^Q_{1,t}=1+y_4+y_1y_4+y_1y_2y_4+y_1y_2y_3y_4.\]
	Therefore, the bracket polynomial is given by
	\[
	\begin{aligned}
		\left\langle \Ltil \right\rangle &=\om(\Scal_{\min}) \cdot F^Q_{1,t}(\ybf)\big|_{y_1=A^8, \, y_2=-A^4, \, y_3=A^{-8}, \, y_4=-A^4}\\ 
		&=A^{-8}-A^{-4}+1-A^4+A^8.
	\end{aligned}
	\]
	\item [(c)] \textbf{Whitehead link:} In this case, $\vert R^u_1\vert=2$, $\vert R^l_1\vert=1$, $\vert R^u_3\vert=1$ and $\vert R^l_3\vert=1$, hence $\om(\Scal_{\min})=A^{-1}$.\\
	The mutation sequence is given by $\mu_1 \circ \mu_2 \circ \mu_3 \circ \mu_4 \circ \mu_5 \circ \mu_6$, and we have
	 \[F^Q_{1,t}=1+y_6+y_4y_6+y_1y_4y_6+y_1y_4y_5y_6+y_1y_2y_4y_5y_6+y_1y_2y_3y_4y_5y_6+y_1y_2y_3y_4y_5y^2_6.\]
	Therefore, the bracket polynomial is given by
	\[
	\begin{aligned}
	\left\langle \Ltil \right\rangle &=\om(\Scal_{\min}) \cdot F^Q_{1,t}(\ybf)\big|_{y_1=A^8, \, y_2=-A^4, \, y_3=-A^{-4}, \, y_4=-A^4, \, y_5=-A^4, \, y_6=A^{-8}} \\ &=A^{-9}-A^{-5}+2A^{-1}-A^3+2A^7-A^{11},
	\end{aligned}
	\]
	which coincides with the formula computed in Section~4 with respect to a different distinguished segment.
\end{itemize}
\end{example}

\bibliographystyle{plain}
\bibliography{KBFpoly.bib}

\end{document}